      \theoremstyle{plain}
      \newtheorem{theorem}{Theorem}[section]
      \newtheorem*{theorem*}{Theorem}
	  \newtheorem*{claim*}{Claim}
      \newtheorem{proposition}[theorem]{Proposition}
      \newtheorem*{proposition*}{Proposition}
      \newtheorem{lemma}[theorem]{Lemma}
      \newtheorem*{lemma*}{Lemma}
      \newtheorem{corollary}[theorem]{Corollary}
      \newtheorem*{corollary*}{Corollary}
      \theoremstyle{definition}
      \newtheorem{definition}[theorem]{Definition}
      \newtheorem*{definition*}{Definition}
      \theoremstyle{remark}
      \newtheorem{remark}[theorem]{Remark}
      \newtheorem*{remark*}{Remark}
      \newtheorem*{example*}{Example}
\newcommand{\R}{\mathbb{R}}
\newcommand{\C}{\mathbb{C}}
\newcommand{\N}{\mathbb{N}}
\newcommand{\M}{\mathscr{M}}
\newcommand{\BSF}{{\widehat{B^s_{p,q}}}}
\renewcommand{\d}{\partial}
\renewcommand{\tilde}{\widetilde}
\newcommand{\abs}[1]{\left| #1 \right|}
\newcommand{\norm}[1]{\left\lVert #1 \right\rVert}
\newcommand{\supp}{\operatorname{supp}}
\renewcommand{\O}{\Omega}
\renewcommand{\hat}{\widehat}
\newcommand{\restr}[1]{|_#1}
\newcommand{\nn}{\nonumber}
\newcommand{\na}[1]{\noalign{\noindent #1}}
\newcommand{\espace}{\vspace*{2mm}}
\newcommand{\Sum}{\ensuremath{\mathop{\sum}\limits}}
\newcommand{\Sup}{\ensuremath{\mathop{\sup}\limits}}
\newcommand{\Int}{\ensuremath{\mathop{\int}\limits}}
\newcommand{\h}[1]{\hat{#1}}
\title{Corners always scatter}
\author{Eemeli Bl{\aa}sten\thanks{Research supported by the Finnish Centre of Excellence in Inverse Problems Research}\\ University of Helsinki \and Lassi P{\"a}iv{\"a}rinta\thanks{Research supported by the ERC 2010 Advanced Grant 267700 and the Finnish Centre of Excellence in Inverse Problems Research}\\ University of Helsinki \and John Sylvester\thanks{Research supported in part by the NSF grant DMS-1007447}\\ University of Washington}
\date{June 2013}
\begin{document}
\maketitle

\section{Abstract}
We study time harmonic scattering for the Helmholtz
equation in $\R^n$. We show that certain penetrable
scatterers with rectangular corners scatter every incident
wave nontrivially. Even though these scatterers have
interior transmission eigenvalues, the relative scattering
(a.k.a. far field) operator has a trivial kernel and
cokernel at every real wavenumber.

\section{Introduction}
The diffraction of light around corners and edges, and
through slits, provided the first evidence for the wave
nature of light. The diffraction patterns caused by plane
waves incident on corners or edges and were among the first
scattered waves to be calculated \cite{SommerfeldOptics}.
Geometric optics expansions for scattered waves
\cite{KellerLewis} reveal the presence of scattered waves
in regions where the simple theory of optics does not.
Much of our understanding of classical electromagnetism
is based on these patterns. This is why a stealth airplane
is built to minimize the scattering from corners and
edges.

Although the single frequency inverse scattering problem
has a unique solution, the wave scattered from a single
incident wave does not contain enough information to
determine an obstacle or a penetrable
scatterer. In many cases, the same scattered wave might have
been scattered by a scatterer supported on a smaller
set. In this paper we will show that, a penetrable scatterer
whose support contains a right angle corner as an extreme
point of its convex hull will scatter any incident wave
nontrivially. This result has the following consequence. Suppose one scatterer contains such a right angle corner and a second scatterer does not contain the corner point in its convex hull. Then the two scatterers have no scattered wave in common. The ranges of their scattering operators are disjoint.\\

The same is not true for a compactly supported obstacle. A
square with sidelength \(\pi\) has Dirichlet
eigenfunctions
\begin{eqnarray}
\nn
  v(x,y) &=& 4\sin(nx)\sin(my) 
  \\\nn
         &=& e^{i(nx+my)}+e^{-i(nx+my)}-e^{i(nx-my)}-e^{-i(nx-my)}
\end{eqnarray}
which means that a sum of four plane waves incident on
this sound soft obstacle produces no scattered wave. Even
though the obstacle has corners, it is invisible to this
incident pattern.

For a penetrable scatterer, the \textit{interior
  transmission eigenvalues} play the same role that the
Dirichlet eigenvalues play for the sound soft
obstacle. Any compactly supported \(L^{\infty}\) scatterer
with positive contrast has infinitely many
\textit{interior transmission eigenvalues}. This implies
the existence of wavenumbers \(k\) for which there exist
\(L^{2}\) incident waves defined on the support of the
scatterer, which produce no scattered wave.

In the spherically symmetric case, the existence of
such  wavenumbers has been known for a long time
\cite{coltonMonk88,coltonPaivarintaSylvester07}. In this
case, the corresponding incident waves extend to
\(\R^{n}\) as Herglotz wavefunctions, so the classical
relative scattering operator has a nontrivial kernel. This is
significant because many reconstruction algorithms in inverse
scattering theory, such as the linear sampling method of
Colton and Kirsch \cite{coltonKirsch96}, and the
factorization method of Kirsch \cite{kirschGrinberg08},
 will work correctly only if the kernel and
cokernel of the relative scattering operator is trivial.

The existence of finitely many interior transmission
eigenvalues for general (non-spherically symmetric)
scatterers with positive contrast was first shown in
\cite{paivarintaSylvester08} in 2008, extended to
infinitely many in \cite{CakoniGintidesHaddar} in 2010,
and generalized to higher order operators in \cite{HKOP1}.
If the support contains a right angle corner, we prove that
these incident waves cannot extend to any open
neighborhood of the corner.  The interpretation is that
these incident waves could only be produced by sources
located on the boundary of the scatterer, but not by any
combination of sources located outside an open
neighborhood of the scatterer. One particular corollary
is that the linear
sampling and factorizations methods, which utilize only
Herglotz wavefunctions as incident waves, will work
successfully for such scatterers.

Our analysis relies on two new theorems that are of
independent interest. We give a new construction of the
so-called complex geometric optics solutions for the
Helmholtz equation, combining the techniques of
Agmon-Hormander \cite{AgmonHormander} and Ruiz
\cite{RuizNotes} to work in \(L^{p}\) based Besov
spaces. This allows us to improve the local regularity of
these solutions without sacrificing the decay as a
function of complex frequency.

The second theorem states the the Laplace transform of a
harmonic polynomial cannot vanish identically on its
complex characteristic variety \(\{\zeta\;\big|
\zeta\cdot\zeta = 0\}\). This is a generalization of the
well-known fact that the Fourier Transform of the solution
to a homogeneous constant coefficient partial differential
equation is supported on the real characteristic variety
of the differential operator, so that it cannot vanish on
that set unless it is identically zero. Although the
support statement cannot be true for the Laplace transform
because it is an analytic function, we show, in the special
case of the Laplacian, that only the zero harmonic polynomial can
vanish identically on this variety. A proof of this theorem for a
general second order elliptic operator with constant coefficients 
would remove the restriction of our results to right angle corners.\\

The classical scattering of time harmonic waves by a
penetrable medium can be modeled by the Helmholtz equation
\begin{equation}\nn
(\Delta + k^2 n^2)u = 0 \quad \text{in } \R^n,
\end{equation}
where $n(x)$ denotes the index of refraction. In this
model, we seek the total wave as 
\begin{equation}\nn
u = v^0 + u^+
\end{equation}
where $v^0$ is the \emph{incident wave} and $u^+$ the
outgoing \emph{scattered wave}. This means that
\begin{equation}
\label{FreeHelm}
(\Delta + k^2)v^0 = 0 \quad \text{in } \R^n
\end{equation}
and therefore that
\begin{equation}
\label{Helm}
(\Delta + k^2) u^+ = k^2 m(v^0 + u^+)
\end{equation}
We assume that the \emph{contrast} $m$, defined by
\begin{equation}\nn
n^2 = 1 - m,
\end{equation}
is compactly supported. The relative scattering operator
maps the asymptotics of Herglotz incident waves to the asymptotics
of scattered waves. A Herglotz incident wave is defined to
be a solution to (\ref{FreeHelm}) of the form
\begin{equation}\nn
v^0(x) = \int_{S^{n-1}} g_0(\theta) e^{ik\theta\cdot x} d\sigma(\theta),
\end{equation}
for some $g_0 \in L^2(S^{n-1})$. The Herglotz incident
waves can be characterized as the solutions to
(\ref{FreeHelm}) whose Fourier transforms belong to the
Besov space $B^{-1/2}_{2,\infty}(\R^{n})$
\cite{AgmonHormander}\footnote{We will give these
  defnitions and make use of these norms in section 
  \ref{sectionFunctionSpaces}. See also\cite{Stein,
    Triebel1}}.  These incident waves have well-defined
asymptotics at infinity
  
\begin{equation}\nn
v^0(r\theta) \sim \frac{e^{ikr}}{(ikr)^\frac{n-1}{2}} g_0(\theta) + \frac{e^{-ikr}}{(-ikr)^\frac{n-1}{2}} g_0(-\theta).
\end{equation}
The scattered wave $u^+$ also has asymptotics at infinity
\begin{equation}\nn
u^+(r\theta) \sim \frac{e^{ikr}}{(ikr)^\frac{n-1}{2}} \alpha^+(\theta)
\end{equation}
and the relative scattering operator $S(k)$ maps
\begin{equation}\nn
S(k): L^2(S^{n-1}) \ni g_0 \mapsto \alpha^+ \in L^2(S^{n-1}).
\end{equation}
For each \(k\), the operator \(S(k)\) is compact and
normal; it never has a bounded inverse, but a number of
methods in inverse scattering succeed only if the kernel,
and hence cokernel, of $S(k)$ are trivial. If the contrast
$m(x)$ in \eqref{Helm} is compactly supported, then a
nontrivial kernel implies that $k^2$ is an \emph{interior
  transmission eigenvalue} (ITE) for any domain $\O$ that
contains the support of $m$ in its interior. This means
that there are nontrivial $u^+$ and $v^0$ satisfying

\begin{eqnarray}\label{eq:2}
(\Delta + k^2) v^0 &=& 0 \quad \text{in } \O
\\\label{eq:3}
(\Delta + k^2(1-m)) u^+ &=& k^2mv^0 \quad \text{in } \O
\\\label{eq:4}
u^+|_{\d\O} = 0, && \,\,\tfrac{\d u^+}{\d \nu}|_{\d\O} = 0
\end{eqnarray}

In general, the eigenfunctions \(u^{+}\) belong to \(
H^{2}_{0}(\Omega)\), and therefore extend to all of
\(\R^{n}\) as a function which is zero outside
\(\Omega\). The waves \(v^{0}\), in general, are only known to
satisfy \(v^{0}\in L^{2}(\Omega)\) and \(\Delta v^{0}\in
L^{2}(\Omega)\). We will refer to  \(v_{0}\) as an
\emph{interior incident wave}, to emphasize that it is
only defined in  \(\Omega\).  In particular, the ITE's depend on both
\(m\) and \(\Omega\).  If $m(x)\restr{\O} > 0$, there
exist infinitely many real ITE's. However, for the same
scatterer \(m\), and a slightly larger domain
\(\Omega_{1}\), there may exist no real ITE's, because the
interior incident waves may not extend to \(\Omega_{1}\).\\

In the spherically symmetric case ($m = m(\abs{x})$),
every interior incident wave
$v^0$ extends to $\R^n$ as a spherical harmonic times a
Bessel function, which is a Herglotz wavefunction, so that
the relative scattering operator genuinely has a
nontrivial kernel and cokernel. We will say that \(k\) is a
\emph{non-scattering wavenumber} whenever the relative
scattering operator \(S(k)\) has a nontrivial
kernel. Although all scatterers with positive contrasts
have infinitely many real ITE's (with \(\Omega\) equal to
the support of the contrast), no non-spherically symmetric
scatterers are known to have \emph{non-scattering
  wavenumbers}.



\smallskip In this paper, we show that, if the contrast
$m(x)$ is the characteristic function of an
$n$-dimensional rectangle times a smooth function which is
nonzero at at least one corner of the rectangle, any
non-scattering interior incident wave $v^0$ does not
extend, as a solution to \eqref{FreeHelm}, to any open
neighborhood of the corner. In particular, no such
scatterer can have non-scattering wavenumbers.

\section{All Corners Scatter}
The theorem below applies to scatterers whose support
contains a corner (a standard right angle corner) as an
extreme point of its convex hull (i.e. there exists a
hyperplane which touches the support of the scatterer at
precisely that corner). We describe this condition in item
\ref{item:3}) below by stating that  \(m\) is the product of a
smooth function and the characteristic function of a
rectangle.  \goodbreak
\begin{theorem}
\label{main}
Suppose that $k \neq 0$, that $K$ is an $n$-dimensional rectangle, and
\begin{enumerate}[i)]
\label{mainThmConditions}
\item\label{item:3} $m = \chi_K \varphi(x)$ with $\varphi \in
  C^\infty(\R^n)$ and $\varphi(x_0) \neq 0$ where
  \(x_{0}\) is a corner of $K$
\item the pair \((u^{+},v^{0})\) are interior transmission
  eigenfunctions of \(m\) in \(\Omega = \supp m\) , i.e
  solutions to (\ref{eq:2}-\ref{eq:3}-\ref{eq:4})
\end{enumerate}
then \(v^{0}\) cannot be extended as an incident wave(i.e
a solution to (\ref{FreeHelm})) to any open  neighborhood
of the corner.
\end{theorem}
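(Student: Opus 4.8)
The plan is to argue by contradiction: suppose $v^0$ extends as a solution of $(\Delta+k^2)v^0=0$ to an open ball $B$ around the corner $x_0$. After a translation and rotation we may assume $x_0=0$ and that $K$ coincides near $0$ with the positive cone $C=\{x : x_j>0\}$, so that on $B$ we have $m=\varphi\,\chi_C$ with $\varphi$ smooth and $\varphi(0)\neq 0$. The subtraction $w:=u^+ - v^0$ (restricted to $B$) then satisfies $(\Delta+k^2)w = k^2 m v^0 = k^2\varphi\chi_C v^0$ in $B$, and by \eqref{eq:4} $w$ together with $\nabla w$ vanishes on $\partial\Omega\cap B$; in particular, since $u^+\equiv 0$ outside $\Omega$, both $w$ and $\nabla w$ vanish on the two coordinate hyperplane pieces bounding $C$ inside $B$. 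The idea is to test the equation against the complex geometric optics (CGO) solutions constructed earlier in the paper: for a complex frequency $\zeta$ with $\zeta\cdot\zeta = -k^2$ (so that $(\Delta+k^2)e^{x\cdot\zeta/h}$ behaves correctly), one has $\Delta (e^{\,\zeta\cdot x}g_\zeta) + k^2 e^{\,\zeta\cdot x} g_\zeta = 0$ with $g_\zeta$ a corrector that is small in the improved $L^{\hat p}$-Besov norms as $|\zeta|\to\infty$.

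Next I would integrate by parts. Choosing a cutoff that localizes to a small ball inside $B$ and using that $w$ and $\nabla w$ vanish on the faces of the cone, Green's identity converts $\int (\Delta+k^2)w \cdot e^{\zeta\cdot x} g_\zeta$ into a bulk term $\int_{C} \varphi v^0 e^{\zeta\cdot x} g_\zeta\,dx$ plus boundary terms supported away from $0$ on the sphere, which decay like $e^{-c|\zeta|}$ if $\zeta$ is chosen so that $\mathrm{Re}(\zeta\cdot x)<0$ on that far part of the support. Thus, after normalizing, the vanishing of the scattered field forces
\begin{equation}\label{eq:cgo-identity}
\int_{C} \varphi(x)\, v^0(x)\, e^{\zeta\cdot x}\,dx = o(1) \quad\text{as } |\zeta|\to\infty
\end{equation}
along a suitable family of $\zeta$, where the $g_\zeta$ contribution has been absorbed into the $o(1)$ using its decay together with the boundedness of $\varphi v^0$ in the dual space. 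Now I would Taylor-expand $\varphi$ and $v^0$ at $0$. The leading contribution is $\varphi(0)$ times the Laplace transform over the cone $C$ of the lowest-order Taylor polynomial $P$ of $v^0$; because $v^0$ solves the Helmholtz equation, $P$ is a harmonic polynomial (the homogeneous part of lowest degree of a Helmholtz solution is harmonic). A scaling $x\mapsto x/\tau$ extracts this leading term: $\tau^{n+d}\int_C \varphi v^0 e^{\zeta\cdot x/\tau}\,dx \to \varphi(0)\int_C P(x)\,e^{\zeta\cdot x}\,dx$ as $\tau\to 0$, while the left side still tends to $0$. Hence the Laplace transform of the harmonic polynomial $P$ vanishes identically on the complex variety $\{\zeta:\zeta\cdot\zeta=0\}$ (the $-k^2$ can be scaled away in this limit), so by the second theorem quoted in the introduction, $P\equiv 0$. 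Iterating up the Taylor expansion forces every homogeneous part of $v^0$ at $0$ to vanish, so $v^0\equiv 0$ near $0$ by analyticity, hence $v^0\equiv 0$ in $\Omega$, contradicting that $(u^+,v^0)$ is a nontrivial transmission eigenpair.

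The main obstacle is making the CGO estimate \eqref{eq:cgo-identity} genuinely quantitative in the right function space: $v^0$ is only known to lie in $L^2(\Omega)$ with $\Delta v^0\in L^2(\Omega)$, so one must pair it against $e^{\zeta\cdot x}g_\zeta$ with a corrector that decays in $|\zeta|$ while $e^{\zeta\cdot x}$ grows, and the balance only closes because the integration is over the cone $C$, on which $\mathrm{Re}(\zeta\cdot x)\le 0$ for the chosen $\zeta$, so the exponential does not in fact grow on the relevant region. Getting the decay of $g_\zeta$ strong enough to kill both the spherical boundary terms and its own self-interaction is exactly what the improved $L^{\hat p}$-Besov CGO construction is for; I would invoke that theorem as a black box and spend the care on the cone geometry, the choice of the admissible family of complex frequencies $\zeta=\zeta(\tau)$ scaling correctly with the dilation parameter, and on verifying that the harmonic-polynomial/Laplace-transform nonvanishing theorem applies after the $-k^2$ constraint is scaled out. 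A secondary technical point is justifying the interchange of the $\tau\to 0$ limit with the integral, which follows from dominated convergence once the integrand is controlled on $C$ uniformly in $\tau$ using the polynomial growth of $v^0$'s Taylor remainder and the exponential decay of $e^{\zeta\cdot x}$ transverse to the edge directions of the cone.
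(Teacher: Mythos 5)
Your outline is essentially the paper's own argument: test the eigenfunction equation against a complex geometric optics solution whose exponential decays into the corner, use the vanishing Cauchy data of $u^+$ to kill all boundary terms, isolate the corner contribution, Taylor-expand $v^0$ so that the leading term is a harmonic polynomial, and reduce to the nonvanishing of its Laplace transform over the cone on the variety $\{\zeta\cdot\zeta=0\}$. You also correctly identify that the $L^p$ bound ($p>n$) on the corrector is precisely what makes its contribution subordinate to the $|\zeta|^{-(N+n)}$ main term.

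One concrete slip needs fixing. The identity $(\Delta+k^2)w=k^2mv^0$ for $w=u^+-v^0$ is false: the correct equations are $(\Delta+k^2(1-m))u^+=k^2mv^0$ and $(\Delta+k^2)u^+=k^2m(v^0+u^+)$, the latter containing the \emph{total} field on the right. This matters for the choice of test function. If you pair against a solution of the free equation (your normalization $\zeta\cdot\zeta=-k^2$, for which no corrector is needed), the surviving bulk integral is $\int e^{\zeta\cdot x}m(v^0+u^+)$, and you would then have to control the Taylor behavior of $u^+$ at the corner, which is not available since $u^+$ is only known to lie in $H^2_0$. The paper instead takes the test function to solve the \emph{perturbed} equation $(\Delta+k^2(1-m))w=0$, with phase $e^{-x\cdot\rho}$, $\rho\cdot\rho=0$, and corrector $\psi$ absorbing the entire $k^2(1-m)$ term; then $u^+$ drops out of the bulk integral exactly, leaving $\int e^{-x\cdot\rho}(1+\psi)\,m\,v^0=0$, which is the identity your argument actually relies on. Two smaller points: for $n=2$ the variety splits into the two lines $\rho_1=\pm i\rho_2$ and the nonvanishing theorem only excludes vanishing on \emph{both}, so you must be prepared to replace $\rho$ by $\overline{\rho}$ (which preserves $\Re\rho_j>0$) to land on the good branch; and the iteration up the Taylor series is unnecessary, since contradicting the nonvanishing of the lowest-order harmonic term, which is nonzero because $v^0\not\equiv 0$, already completes the proof.
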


\begin{corollary}
A scatterer $m$ which satisfies item \ref{item:3}) has no
non-scattering wavenumbers.
\end{corollary}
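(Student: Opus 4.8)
The plan is to deduce the corollary from Theorem \ref{main} by a short contradiction argument; essentially all of the work has been done in the theorem, and the corollary merely packages its conclusion against the definition of a non-scattering wavenumber. Suppose, for contradiction, that $m = \chi_K\varphi$ with $\varphi\in C^\infty(\R^n)$ and $\varphi(x_0)\neq 0$ at a corner $x_0$ of $K$, and that $m$ has a non-scattering wavenumber $k\neq 0$; that is, $S(k)$ has a nontrivial kernel. First I would recall the chain of implications spelled out in the introduction: a nontrivial kernel element $g_0\in L^2(S^{n-1})$ produces a Herglotz incident wave $v^0(x)=\int_{S^{n-1}}g_0(\theta)e^{ik\theta\cdot x}\,d\sigma(\theta)$ which generates no scattered wave, $u^+\equiv 0$ outside $\supp m$, and hence (taking $\Omega=\supp m = K$, which contains $\supp m$ in the sense required, since we may also enlarge $K$ slightly or use $\Omega=\supp m$ directly) the pair $(u^+,v^0)$ satisfies the interior transmission system \eqref{eq:2}--\eqref{eq:4} with $\Omega=\supp m$. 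Moreover $g_0$ being nontrivial forces $v^0$ to be nontrivial, so $(u^+,v^0)$ is a genuine pair of interior transmission eigenfunctions.

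Now both hypotheses of Theorem \ref{main} hold: item \ref{item:3}) is exactly our assumption on $m$, and item ii) holds by the previous paragraph. Theorem \ref{main} therefore asserts that $v^0$ cannot be extended, as a solution of \eqref{FreeHelm}, to any open neighborhood of the corner $x_0$. But the Herglotz wavefunction $v^0(x)=\int_{S^{n-1}}g_0(\theta)e^{ik\theta\cdot x}\,d\sigma(\theta)$ is real-analytic on all of $\R^n$ and solves $(\Delta+k^2)v^0=0$ on all of $\R^n$; in particular it is a solution of \eqref{FreeHelm} in every open neighborhood of $x_0$, contradicting the conclusion of the theorem. Hence no such non-scattering wavenumber $k\neq 0$ can exist, and since $k=0$ is excluded from the notion of scattering wavenumber anyway, $m$ has no non-scattering wavenumbers.

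There is essentially no analytic obstacle here; the only point requiring a word of care is the bookkeeping around the domain $\Omega$. The definition of ITE in the introduction uses a domain $\Omega$ containing $\supp m$ in its interior, whereas Theorem \ref{main} is stated with $\Omega=\supp m$. This is harmless: the implication "nontrivial kernel of $S(k)$ $\Rightarrow$ $(u^+,v^0)$ solves \eqref{eq:2}--\eqref{eq:4} on $\Omega$" holds for \emph{every} admissible $\Omega$, in particular one can restrict the equations from a larger $\Omega$ down to $\Omega=\supp m=K$ and the three conditions \eqref{eq:2}--\eqref{eq:4} persist (the boundary conditions on $\partial K$ follow because $u^+\in H^2_0$ of any larger domain vanishes together with its normal derivative on $\partial K$, as $u^+\equiv 0$ outside $\supp m$). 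With that observation the corollary is immediate from Theorem \ref{main}.
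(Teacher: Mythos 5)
Your argument is correct and follows essentially the same route as the paper: a nontrivial kernel of $S(k)$ yields a Herglotz wave $v^0$ and a scattered wave $u^+$ vanishing outside $\supp m$, so $(u^+,v^0)$ is an interior transmission eigenpair on $\Omega=\supp m$, while $v^0$ extends past the corner, contradicting Theorem \ref{main}. The only step you assert rather than justify is the passage from vanishing far field to $u^+\equiv 0$ outside $\supp m$ (Rellich's lemma plus unique continuation) and the $H^2_{loc}$ regularity of $u^+$ needed for the Cauchy data to vanish on $\partial\Omega$; the paper makes both explicit, but these are standard and your outline is sound.
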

\begin{proof}
  If the kernel of $S(k)$ is nontrivial, then there is a
  Herglotz wavefunction \(v^0\) satisfying \eqref{FreeHelm} in
  $\R^n$, and an outgoing $u^+$ satisfying (\ref{Helm}) in
  $\R^n$ with vanishing far field $\alpha^+$. Rellich's
  lemma and unique continuation \cite{RellichLemma}
  guarantee that $u^+$ vanishes outside the support of
  $m$. It follows from the fact that $m\in L^\infty$ and
  $v^0 \in L^2$ that $u^+ \in H^2_{loc}(\R^n)$, and
  therefore the restriction of \(u^{+}\) and its first
  derivative to \(\d\Omega\) must vanish. Hence the pair
  \((u^{+},v^{0})\) are interior transmission
  eigenfunctions in  \(\Omega\), but  \(v^{0}\) extends
  past the corner, contradicting Theorem~\ref{main}.
\end{proof}

We summarize our proof of Theorem \ref{main} in the
following paragraph.
We
 will make use of some
\emph{complex geometric optics} solutions to the
homogeneous version of (\ref{eq:3}). Specifically, if we
multiply equation (\ref{eq:3}) by any solution  \(w\) to 
\begin{equation}
\label{mFree}
\big( \Delta + k^2(1-m) \big) w = 0 
\end{equation}
and integrate by parts, using the fact that  \(u^{+}\) and
its first derivatives vanish on  \(\d \Omega\), we see that 
\begin{equation}
\label{orthoFree}
\int_K w k^2 m v^0 = 0
\end{equation}
Theorem \ref{cgolp} below shows that we may choose \(w\) to be
exponentially decaying as we move into  \(\Omega\) from
the corner, so that the main contribution to the integral
occurs at the corner. If  \(v^{0}\) could be extended to
a neighborhood of the corner, its Taylor series would
necessarily begin with a harmonic polynomial
(Lemma~\ref{TaylorFree}), and the dominant term in the
integral would come from the decaying exponential times that
harmonic polynomial. This would then imply that the
Laplace transform of this harmonic polynomial vanished on
the complex characteristic variety associate to the
Laplacian, and we devote Section \ref{sectionPolynomial}
to the proof of Theorem \ref{LTharmonic}, which says that this
cannot be so.\\

The complex geometric optics solutions we use go back to
\cite{sylvesteruhlmann87}. There have been many
improvements since then, but none provide enough local
regularity to show that their contributions to the
integral in (\ref{orthoFree}) are dominated by the Laplace
transform of the harmonic polynomial. Therefore, we give a
new construction in Section \ref{sec:besovestimates}, combining the
 \(L^{p}\) techniques in  \cite{RuizNotes} with the
 geometric  \(L^{2}\) based constructions in
 \cite{AgmonHormander} to prove
\begin{theorem}
\label{cgolp}
Suppose that $m(x)$ satisfies i) in Theorem
\ref{mainThmConditions}. For any bounded domain \(D\),
and any $2 \leq p < \infty$, there exist constants
\(C\) and  \(r\) such that if  $\rho \in \C^n$ and satisfies
$\rho\cdot\rho = 0$ and  \(|\rho|>r\), there exists $w$ satisfying
\eqref{mFree} in \(D\) of the form
\begin{equation}\label{eq:1}
w = e^{-x\cdot \rho}(1 + \psi)
\end{equation}
with
\begin{equation}
\label{cgolpest}
\norm{\psi}_{L^p(D)} \leq \frac{C}{\abs{\rho}}
\end{equation}
\end{theorem}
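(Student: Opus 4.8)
The plan is to construct $\psi$ via the standard Lippmann–Schwinger-type fixed point argument, but to close the estimate in an $L^p$-based Besov space rather than in the weighted $L^2$ spaces of Agmon–Hörmander or the $L^p$ spaces of Ruiz alone. Substituting $w = e^{-x\cdot\rho}(1+\psi)$ with $\rho\cdot\rho=0$ into \eqref{mFree} and using $\Delta(e^{-x\cdot\rho}f) = e^{-x\cdot\rho}(\Delta - 2\rho\cdot\nabla)f$ reduces the equation to
\begin{equation}\nn
(\Delta - 2\rho\cdot\nabla + k^2(1-m))(1+\psi) = 0,
\end{equation}
i.e. $(\Delta - 2\rho\cdot\nabla)\psi = -k^2(1-m)(1+\psi) + k^2 m \cdot 1 \cdot(\text{correction})$; collecting terms, $\psi$ must solve $(\Delta - 2\rho\cdot\nabla)\psi = -k^2(1-m)(1+\psi)$ up to the constant forcing term $-k^2(1-m)$. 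The first step is therefore to invert the conjugated operator $L_\rho := \Delta - 2\rho\cdot\nabla$, whose symbol is $-|\xi|^2 - 2i\rho\cdot\xi$; on the Fourier side this is division by a symbol vanishing on the codimension-two set $\{|\xi|^2 = 0, \ \rho\cdot\xi = 0\}$ (after splitting $\rho = \rho_1 + i\rho_2$ with $|\rho_1|=|\rho_2|$, $\rho_1\cdot\rho_2=0$, the real and imaginary parts of the symbol vanish simultaneously on a sphere in the plane orthogonal to both $\rho_1$ and $\rho_2$). I will denote the solution operator $G_\rho$, so that $\psi = G_\rho(-k^2(1-m)(1+\psi))$, a fixed point equation $\psi = T\psi$ with $T\psi = -k^2 G_\rho((1-m)(1+\psi))$.

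The core estimate — and the main obstacle — is the mapping bound
\begin{equation}\nn
\norm{G_\rho f}_{L^p(D)} \leq \frac{C}{|\rho|}\,\norm{f}_{\text{(appropriate Besov space)}},
\end{equation}
uniformly for $|\rho| > r$, together with a matching bound showing multiplication by $(1-m)$ (a compactly supported $L^\infty$ function, by hypothesis i)) maps $L^p(D)$ back into that Besov space with norm independent of $\rho$. The Agmon–Hörmander philosophy is to decompose frequency space into dyadic shells around the singular sphere $\{|\xi| = |\rho_1|\} \cap \{\rho_1\cdot\xi = \rho_2\cdot\xi = 0\}$ and to exploit the fact that the symbol $|\xi|^2 + 2i\rho\cdot\xi$ has modulus comparable to $|\rho|\cdot\operatorname{dist}(\xi, \Sigma)$ near that set; the gain of $1/|\rho|$ comes precisely from this linear vanishing. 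Ruiz's $L^p$ techniques supply the Stein–Tomas / uniform Sobolev ($L^{p'} \to L^p$) bounds needed to handle the contribution of the shell where the symbol is genuinely small, while the geometric $L^2$ weighted estimates control the remaining far shells. The delicate point is interpolating/combining these so that the output lands in $L^p$ (for the later application we want $p$ possibly large, to beat the Taylor-expansion error of $v^0$ near the corner) while the input is only required to be the product of $L^\infty$ with something in $L^p$ — this forces the intermediate space to be the inhomogeneous $L^p$-Besov space $\widehat{B^{s}_{p,q}}$ advertised in the macros $\LPF$, $\BSF$, with the indices chosen so that both $G_\rho: \BSF \to L^p(D)$ with norm $C/|\rho|$ and $L^\infty \cdot (\cdot): L^p(D) \to \BSF$ with norm $O(1)$ hold simultaneously.

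Granting the key estimate, the remainder is routine: the constant forcing term $-k^2(1-m)$ lies in the relevant Besov space (compact support, bounded), so $G_\rho(-k^2(1-m)) =: \psi_0$ satisfies $\norm{\psi_0}_{L^p(D)} \leq C/|\rho|$; and the linear map $\psi \mapsto -k^2 G_\rho((1-m)\psi)$ has operator norm on $L^p(D)$ bounded by $C\norm{1-m}_\infty/|\rho|$, which is $< 1/2$ once $|\rho| > r$ for $r$ large enough. Hence $T$ is a contraction on $L^p(D)$, the Neumann series $\psi = \sum_{j\geq 0} (-k^2 G_\rho(1-m)\cdot)^j \psi_0$ converges, and $\norm{\psi}_{L^p(D)} \leq 2\norm{\psi_0}_{L^p(D)} \leq C/|\rho|$, which is \eqref{cgolpest}; unwinding shows $w = e^{-x\cdot\rho}(1+\psi)$ solves \eqref{mFree} in $D$. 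I expect essentially all the real work to be in establishing the uniform-in-$\rho$ Besov bound for $G_\rho$ — the construction of $G_\rho$ by frequency localization around the singular sphere and the careful bookkeeping of how the $1/|\rho|$ gain survives passage from $L^2$ estimates on dyadic shells to an $L^p$ estimate globally — and I would organize Section \ref{sec:besovestimates} around that single proposition, deferring the fixed-point wrap-up to a short paragraph at the end.
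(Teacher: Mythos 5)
Your overall architecture --- the ansatz $w=e^{-x\cdot\rho}(1+\psi)$, the conjugated operator $\Delta-2\rho\cdot\nabla$ with its codimension-two characteristic variety, a Neumann-series/contraction argument driven by one uniform resolvent estimate in an $L^p$-based Besov space plus a matching multiplication estimate --- is exactly the paper's. But your plan for the core estimate is where it would fail. You propose to obtain the $L^p$ output from Stein--Tomas / uniform Sobolev ($L^{p'}\to L^p$) bounds on the shell where the symbol is small. Those restriction-type estimates hold only for a limited range of exponents and, more importantly, do not come with the factor $\abs{\rho}^{-1}$: uniform Sobolev inequalities are \emph{uniform} in $\rho$, not decaying in $\rho$, and the whole point of Theorem \ref{cgolp} --- as the paper stresses right after its statement --- is to allow \emph{every} $2\le p<\infty$ (in particular $p>n$) while keeping the full first power of $\abs{\rho}$ in the denominator. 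The paper's route is different and more elementary: it decomposes $\psi$, $f$ and the fundamental solution dyadically in \emph{physical} space, $\psi_j=\Phi_j\psi$, measures each piece by the $L^p$ norm of its \emph{Fourier transform} (the spaces $\hat{B^{-1}_{p,\infty}}$ and $\hat{B^{1}_{p,1}}$), and reduces everything to the sup-norm bound $\lVert\hat{\Phi_j}\ast\tfrac{1}{P_\rho}\rVert_{\infty}\le C2^{j}/\abs{\rho}$ for mollifications of the reciprocal symbol at scale $2^{-j}$ (Proposition \ref{sec:estim-source}, built on the geometric Theorem \ref{RuizEst}); Young's inequality and Hausdorff--Young then yield \eqref{eq:19} and the $L^q(D)$ bound for every $q>2$ with the full $\abs{\rho}^{-1}$. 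No restriction theorem enters.

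The second gap is the multiplication step. You treat it as routine because $1-m$ is ``a compactly supported $L^\infty$ function,'' but multiplication by a general compactly supported $L^\infty$ function does \emph{not} map $L^p(D)$ (or $\hat{B^{-1}_{p,\infty}}$) into $\hat{B^{1}_{p,1}}$: the target norm is a weighted $L^p$ norm of the Fourier transform, and a rough multiplier destroys it. This is precisely where hypothesis i) --- $m=\chi_K\varphi$ with $K$ a rectangle --- is used: the paper writes the contrast as a product of differences of Heaviside functions times a smooth function, so that multiplication acts on the Fourier side as convolution with $\hat{\varphi}\in L^1$ composed with one-dimensional Hilbert transforms, bounded on $L^p$ for $1<p<\infty$ (Lemma \ref{sec:lemma-Q}). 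Without that structure the estimate \eqref{eq:22} is unavailable and your contraction cannot close as stated.
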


It is the statement \(2 \leq p < \infty\) that differentiates
Theorem \ref{cgolp} from previous
constructions. We will need to choose  \(p>n\),
while maintaining the first power of  \(|\rho|\) in the denominator 
for our proof to succeed.\\

The simple lemma below notes that the first term in the
Taylor series of an incident wave at an interior point is
a harmonic polynomial. 
\begin{lemma}
\label{TaylorFree}
Suppose that $v^0 \not\equiv 0$ and  $x_0$ is in an open set where $(\Delta + k^2)v^0 = 0$. Then the lowest order homogeneous polynomial in the Taylor series for $v^0$ at $x_0$ is harmonic.
\end{lemma}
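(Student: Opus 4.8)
The plan is to expand $v^0$ in a Taylor series about $x_0$, group the terms by homogeneity, and feed this expansion into the equation $(\Delta + k^2)v^0 = 0$ to read off a recursion. Write $v^0(x_0 + y) = \sum_{j \geq j_0} P_j(y)$, where each $P_j$ is a homogeneous polynomial of degree $j$ and $P_{j_0} \not\equiv 0$ is the lowest-order term (such a $j_0$ exists because $v^0$ is real-analytic near $x_0$ — it solves an elliptic equation with analytic coefficients — and $v^0 \not\equiv 0$, so not all Taylor coefficients vanish). The Laplacian lowers homogeneity by exactly $2$, so $\Delta P_j$ is homogeneous of degree $j-2$ (or zero), while multiplication by $k^2$ preserves degree. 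Substituting the series into $(\Delta + k^2)v^0 = 0$ and collecting terms of each fixed homogeneity degree gives, for every $\ell$,
\begin{equation}\nn
\Delta P_{\ell+2} + k^2 P_\ell = 0.
\end{equation}

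Now I would look at the lowest degree that appears. The term $P_{j_0}$ is the lowest, and there is no polynomial of degree $j_0 - 2$ in the expansion to pair with $k^2 P_{j_0 - 2}$ (that term is simply absent, i.e. $P_{j_0-2} \equiv 0$). Collecting terms of homogeneity degree $j_0 - 2$ in the identity above therefore yields $\Delta P_{j_0} = 0$, since the only contribution of that degree is $\Delta P_{j_0}$ (the term $k^2 P_{j_0 - 2}$ vanishes). Hence $P_{j_0}$, the lowest order homogeneous polynomial in the Taylor series of $v^0$ at $x_0$, is harmonic, which is the assertion.

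There is essentially no hard step here; the only point requiring a word of justification is that the Taylor series exists and has a genuine lowest-order nonzero term, i.e. that $v^0$ is real-analytic near $x_0$ and not flat there. Analyticity follows from elliptic regularity for the constant-coefficient operator $\Delta + k^2$ (or one may simply note $v^0 \in C^\infty$ and that a solution of $(\Delta+k^2)v^0=0$ vanishing to infinite order at a point must vanish identically by unique continuation, so the hypothesis $v^0 \not\equiv 0$ forces a nonzero lowest-order term). Everything else is the bookkeeping of matching homogeneous components, which is routine.
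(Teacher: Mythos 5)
Your proof is correct and follows essentially the same route as the paper's: both identify the lowest-degree homogeneous component of $(\Delta+k^2)v^0$ as $\Delta P^{j_0}$ and conclude it must vanish. The only cosmetic difference is that you match homogeneous components of the full convergent Taylor series, whereas the paper writes only the leading term plus a remainder and kills the degree-$(N-2)$ polynomial via the pointwise bound $O(\abs{x-x_0}^{N-1})$; both hinge on the same analyticity fact that a nonzero lowest-order term exists.
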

\begin{proof}
The function $v^0$ is real analytic at $x_0$, so its
Taylor expansion doesn't vanish.
We call the lowest order polynomial  \(P_{N}\) and \(v^{N+1}\) is the remainder.
\begin{equation}\nn
\begin{split}
v^0(x) &= P^N(x-x_0) + v^{N+1}(x)  \\
\Delta v^0(x) &= \Delta P^N(x-x_0) + \Delta v^{N+1}(x)  \\
&= Q^{N-2}(x-x_0) + q^{N-1}(x) 
\end{split}
\end{equation}
where $P^N$ and $Q^{N-2}$ are homogeneous polynomials of degree $N$ and $N-2$ respectively, and
\begin{equation}\nn
\begin{split}
\abs{v^{N+1}(x)} &\leq c \abs{x-x_0}^{N+1} \\
\abs{q^{N-1}(x)} &\leq c \abs{x-x_0}^{N-1}.
\end{split}
\end{equation}
We may assume that $N\geq 2$ as all polynomials of degree
less than two are harmonic. In this case, it follows from
\begin{equation}\nn
\Delta v^0 = - k^2 v_0
\end{equation}
that
\begin{equation}\nn
\abs{Q^{N-2}(x-x_0)} = \abs{- q^{N-1}(x) - k^2(P^N + v^{N+1})} \leq c \abs{x-x_0}^{N-1},
\end{equation}
but $Q^{N-2}$ is homogeneous of order $N-2$, so must be zero.
\end{proof}

The final main ingredient, which we will prove in Section
\ref{sectionPolynomial}, concerns the Laplace transform of
a homogeneous harmonic polynomial, i.e.

\begin{equation}\nn
\widehat{P}(\rho) := \int_{x>0} e^{-x\cdot \rho} P^N(x) dx
\end{equation}
where the notation  \(x>0\) means that every component of
\(x\) is greater than  \(0\). We also use the notation
\(\frac{1}{\rho}\) to denote the vector in  \(\C^{n}\)
whose components are the reciprocals of the components of  \(\rho\).

\begin{theorem}
\label{LTharmonic}
The Laplace transform of a nonzero degree \(N\) homogeneous
harmonic polynomial on \(\R^{n}\) is a degree \(N+n\)
homogeneous polynomial \(Q^{N+n}(\frac{1}{\rho})\) of the
reciprocals of the transform variables. If \(n\ge 3\), it
cannot vanish identically on any open subset of the
variety $\rho \cdot \rho = 0$.  If \(n=2\), it cannot
vanish identically on both an open subset of
\(\rho_{1}=i\rho_{2}\) and an open subset of 
\(\rho_{1}= -i\rho_{2}\).
\end{theorem}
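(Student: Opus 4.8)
The plan is to compute the Laplace transform explicitly, convert the vanishing hypothesis into a polynomial divisibility, and then extract the consequences of harmonicity by an induction on the dimension.

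\emph{Explicit computation.} Writing $P^N=\sum_{|\alpha|=N}c_\alpha x^\alpha$ and integrating monomial by monomial, $\int_{x_j>0}e^{-x_j\rho_j}x_j^{\alpha_j}\,dx_j=\alpha_j!\,\rho_j^{-\alpha_j-1}$ (for $\mathrm{Re}\,\rho_j>0$), so that $\widehat P(\rho)=\sum_{|\alpha|=N}c_\alpha\,\alpha!\,\rho^{-\alpha-\mathbf 1}=(\rho_1\cdots\rho_n)^{-1}\sum_{|\alpha|=N}c_\alpha\alpha!\,(1/\rho)^\alpha$. This is $Q^{N+n}(1/\rho)$ with $Q^{N+n}(w):=w_1\cdots w_n\sum_{|\alpha|=N}c_\alpha\alpha!\,w^\alpha$ homogeneous of degree $N+n$, which is the first assertion. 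It is convenient to record the polynomial $S(\rho):=(\rho_1\cdots\rho_n)^{N+1}\widehat P(\rho)=\sum_{|\alpha|=N}c_\alpha\alpha!\,\rho^{\mathbf 1N-\alpha}$; wherever all $\rho_j\neq0$, $\widehat P$ vanishes exactly where $S$ does.

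\emph{Reduction to divisibility.} For $n\ge3$ the quadratic form $\rho\cdot\rho$ has rank $\ge3$, hence is irreducible, so $V=\{\rho\cdot\rho=0\}$ is an irreducible affine variety with $I(V)=(\rho\cdot\rho)$; since a nonempty Euclidean-open subset of an irreducible variety is Zariski dense, $S$ vanishing on such a subset of $V$ forces $S\equiv0$ on $V$, i.e. $\rho\cdot\rho\mid S$. For $n=2$, vanishing of $\widehat P$ on an open subset of the line $\{\rho_1=i\rho_2\}$ propagates along the line by the one-variable identity theorem, giving $(\rho_1-i\rho_2)\mid S$, and likewise $(\rho_1+i\rho_2)\mid S$ from the other line, so again $\rho\cdot\rho\mid S$. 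Thus in every case the theorem is equivalent to the statement: \emph{if $P^N$ is a nonzero homogeneous harmonic polynomial then $\rho\cdot\rho\nmid S$.}

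\emph{Using harmonicity, by induction on $n$.} For $n=2$ write $P^N=\sum_j c_j x_1^j x_2^{N-j}$; harmonicity is the three-term recursion $(\ell+2)(\ell+1)c_{\ell+2}=-(N-\ell)(N-\ell-1)c_\ell$, while $(\rho_1\mp i\rho_2)\mid S$ reads $\sum_j c_j\,j!(N-j)!(\pm i)^{N-j}=0$; feeding the closed-form solution of the recursion into these two scalar relations collapses each to a nonzero multiple of $c_0$, respectively $c_1$, so $P^N=0$. For $n\ge3$, assuming the theorem in dimension $n-1$, I would peel off the last variable: write $P^N=\sum_{m=0}^N x_n^m P_m(x')$ with $x'=(x_1,\dots,x_{n-1})$, so $\Delta P^N=0$ becomes $\Delta' P_m=-(m+2)(m+1)P_{m+2}$ with $\Delta'=\sum_{j<n}\partial_{x_j}^2$. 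Writing $S=(\rho\cdot\rho)T$ and $q=\rho_1^2+\dots+\rho_{n-1}^2$, a Leibniz expansion gives $\partial_{\rho_n}^k S|_{\rho_n=0}=q\,\partial_{\rho_n}^k T|_{\rho_n=0}+k(k-1)\,\partial_{\rho_n}^{k-2}T|_{\rho_n=0}$, while a direct calculation gives $\partial_{\rho_n}^k S|_{\rho_n=0}=k!(N-k)!\,(\rho_1\cdots\rho_{n-1})^{N-k}\,S_{n-1}(P_{N-k})$, where $S_{n-1}(\cdot)$ is the analogous polynomial attached to a polynomial in $x'$. One then inducts on $k$: once $P_N,\dots,P_{N-k+1}$ are known to vanish, an auxiliary induction on the lower $\rho_n$-derivatives of $T$ kills the terms $k(k-1)\partial_{\rho_n}^{k-2}T|_{\rho_n=0}$, so $q$ (which is coprime to $\rho_1\cdots\rho_{n-1}$) divides $S_{n-1}(P_{N-k})$; simultaneously $P_{N-k+2}=0$ forces $\Delta'P_{N-k}=0$, so $P_{N-k}$ is harmonic in $\R^{n-1}$ (trivially so when $\deg P_{N-k}\le1$); the induction hypothesis in dimension $n-1$ then gives $P_{N-k}=0$. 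Hence all $P_m=0$ and $P^N=0$.

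\emph{Anticipated obstacle.} I expect the crux to be exactly this coupling: $\rho\cdot\rho\mid S$ alone does not constrain $P^N$ (it holds, for instance, for the non-harmonic $P^N(x)=\sum_{j}\prod_{k\ne j}x_k^2$ with $N=2(n-1)$), so harmonicity must genuinely enter, and it does so only after $P^N$ is reassembled layer by layer in $x_n$. The delicate point is that $\partial_{\rho_n}^k S|_{\rho_n=0}$ becomes divisible by $q$ only \emph{after} the lower layers are shown to vanish, which is what forces the nested induction instead of a one-step argument — and what makes the $n=2$ base case, where this dimensional reduction is unavailable, require the separate explicit computation above.
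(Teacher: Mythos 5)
Your argument is correct in outline and reaches the theorem by a genuinely different route from the paper. Both proofs share the explicit monomial computation and the reduction, via irreducibility of the quadric and Zariski density of Euclidean-open subsets, to the divisibility statement ``$P$ harmonic and nonzero $\Rightarrow \rho\cdot\rho \nmid S$.'' Where you diverge is in how harmonicity enters. The paper integrates by parts against $e^{-\rho\cdot x}$ over the orthant: since $\Delta P=0$, only boundary terms survive, so (in the reciprocal variables $\eta=1/\rho$) the transform acquires the structural form $\frac{\sigma_n(\eta)}{\sigma_{n-1}(\eta^2)}\sum_i(P_{\hat i}+\eta_iQ_{\hat i})$, a sum of pieces each missing one variable; the contradiction then comes from a purely algebraic fact (via a parity decomposition and an induction on dimension) that $\sigma_{n-1}^2(s)$ cannot divide a nonzero sum $\sum_i T_{\hat i}$. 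You instead keep harmonicity in the form of the layer recursion $\Delta'P_m=-(m+2)(m+1)P_{m+2}$ and Taylor-expand the relation $S=(\rho\cdot\rho)T$ in $\rho_n$, so that each coefficient hands you the divisibility hypothesis for $S_{n-1}(P_{N-k})$ in one fewer variable once the earlier layers and the earlier derivatives of $T$ have been killed; the nested induction you describe (on $k$, using steps $k-2$ to clear both the $k(k-1)\partial_{\rho_n}^{k-2}T$ term and the harmonicity of $P_{N-k}$) is sound, and I verified the two identities for $\partial_{\rho_n}^kS|_{\rho_n=0}$ that drive it. Your route is more self-contained (no integration by parts, no parity lemmas, no irreducibility of $\sigma_{n-1}(\eta^2)$ in the reciprocal variables — only the standard irreducibility of $\rho\cdot\rho$ and its coprimality with the coordinate monomials), at the cost of leaning on the full strength of the induction hypothesis at every layer; the paper's boundary-term formulation is the one its authors hope might generalize to other constant-coefficient second-order operators.

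One small correction to your $n=2$ base case: it is not true that each of the two relations $\sum_j c_j\,j!(N-j)!(\pm i)^{N-j}=0$ collapses to a multiple of $c_0$ alone, respectively $c_1$ alone. Substituting the closed form of the recursion, one finds (up to the prefactor $(\pm i)^N$)
\begin{equation*}
\sum_j c_j\,j!(N-j)!\,(\pm i)^{N-j}
= (\pm i)^{N}\Bigl(N!\,\bigl\lceil\tfrac{N+1}{2}\bigr\rceil\,c_0 \mp i\,(N-1)!\,\bigl\lfloor\tfrac{N+1}{2}\bigr\rfloor\,c_1\Bigr),
\end{equation*}
so each relation involves both $c_0$ and $c_1$. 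What is true, and what you need, is that the resulting $2\times2$ system in $(c_0,c_1)$ is nonsingular (its determinant is a nonzero multiple of $N!\,(N-1)!$), so that vanishing on \emph{both} lines forces $c_0=c_1=0$ and hence $P^N=0$; this also makes transparent why a single line does not suffice, consistent with $(x_1\pm ix_2)^N$ annihilating exactly one of the two functionals when $N$ is odd. This is a computational slip in your description, not a gap in the method.
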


Theorem \ref{main} is now a fairly direct consequence.

\begin{proof}[Proof of Theorem \ref{main}]

  Without loss of generality, we will assume that the
  rectangle is located in the positive orthant $\{ x_j >
  0\}$, that $x=0$ is the corner at which $m$ doesn't
  vanish, and that $m(0) = 1$. We choose \(\rho\in\C^{n}\)
  satisfying \(\rho\cdot\rho = 0\) and such that the real
  part of each component \(\rho_{j}> \frac{1}{2\sqrt{n}}\). 
  This guarantees that for each
  \(x\) in the positive orthant
  \begin{equation}
    \nn
    -\Re{x\cdot\rho}<-\tau|x||\rho|
  \end{equation}
  with \(\tau = \frac{1}{2\sqrt{n}}\). Note that the set
  of \(\rho\) that satisfy this condition is an open
  subset of the variety \(\rho\cdot\rho = 0\). Hence, for
  \(n\ge3\), the Laplace transform of any harmonic
  polynomial does not vanish at at least one such
  \(\rho\). For \(n=2\), we note that, an open subset of
  \(\rho\cdot\rho = 0\) contains an open subset of either
  \(\rho_{1}=i\rho_{2}\) or an open subset of \(\rho_{1}=
  -i\rho_{2}\). Our harmonic polynomial cannot vanish on
  both. If it vanishes on one of these, we change \(\rho\)
  to its complex conjugate \(\overline{\rho}\), which is in
  the other, and has the same real part. \\

We insert
the $w$ from Theorem \ref{cgolp}, with this \(\rho\) into \eqref{orthoFree},
obtaining
\begin{eqnarray}\label{eq:5}
0 = \int_\Omega e^{-x\cdot \rho} (1+\psi) m v_{0}.
\end{eqnarray}
Outside a disk of radius  \(\epsilon\)  of the corner, the contribution is
exponentially small
\begin{eqnarray*}
\abs{\int_{\Omega\setminus N_{\epsilon}} e^{-x\cdot \rho} (1+\psi)
m v^0}
\le e^{-\tau\epsilon|\rho|} ||1+\psi||_{2}||mv^{0}||_{2}\le  C e^{-\tau\epsilon|\rho|} 
\end{eqnarray*}
if we choose \(\tau\) large enough. 
Inside the  \(\epsilon\) neighborhood, 
we expand $v^0$ as in Lemma \ref{TaylorFree}, obtaining
\begin{eqnarray*}
 \int_{N_{\epsilon}}e^{-x\cdot \rho} (1+\psi) m\big(P^N(x)
 + v^{N+1}(x)\big)
\end{eqnarray*}

We now rewrite (\ref{eq:5}) as 
\begin{multline}
\label{orthoFree2}
\int_{N_{\epsilon}} e^{-x\cdot \rho} m P^N =
\int_{N_{\epsilon}} e^{-x\cdot\rho} m Q^{N+1}
\tilde{v}^{N+1} + \int_{N_{\epsilon}} e^{-x\cdot\rho} \psi m (P^N
+ Q^{N+1} \tilde{v}^{N+1})\\
 + \int_{\Omega\setminus N_{\epsilon}} e^{-x\cdot \rho} (1+\psi)
m v^0
\end{multline}
where we have rewritten $v^{N+1} = -Q^{N+1}
\tilde{v}^{N+1}$ as a homogeneous polynomial times an
analytic function $\tilde{v}^{N+1}$. Note that
$\tilde{v}^{N+1}$ remains bounded in  \(N_{\epsilon}\)
because  \(v_{0}\)  is analytic in a full neighborhood of
the corner point. The following lemma tells us how the
first two terms on right hand side
of \eqref{orthoFree2} decay as $\abs{\rho} \to \infty$.

\begin{lemma}
  Let $R^N(x)$ be a homogeneous polynomial of degree $N$
  and $\Re \rho_j > 0$ for all $j = 1,\ldots , n$. Then,
  for any  \(f\in L^{p}\) 
\begin{equation}\nn
  \abs{\int_{x > 0} e^{-x\cdot \rho} R^N(x) f(x) dx} \leq
  C \abs{\rho}^{-(N+n) + n/p}\norm{f}_{L^p}
\end{equation}
\end{lemma}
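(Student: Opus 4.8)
The plan is to prove the inequality by a scaling argument. Since $R^N$ is homogeneous of degree $N$ and the region of integration $\{x>0\}$ is scale-invariant, I would substitute $x = y/|\rho|$, which turns the exponent $x\cdot\rho$ into $y\cdot(\rho/|\rho|)$, extracts a factor $|\rho|^{-N}$ from $R^N(x)=|\rho|^{-N}R^N(y)$, and a factor $|\rho|^{-n}$ from the volume element $dx = |\rho|^{-n}dy$. After this change of variables the problem reduces to bounding
\begin{equation}\nn
\abs{\int_{y>0} e^{-y\cdot(\rho/|\rho|)} R^N(y)\, f(y/|\rho|)\, dy}
\end{equation}
by $C\,|\rho|^{n/p}\norm{f}_{L^p}$, with a constant $C$ depending only on $n$, $N$, $p$, but uniform over the directions $\rho/|\rho|$ with $\Re\rho_j>0$.

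Next I would apply Hölder's inequality in the $y$-variable with exponents $p$ and $p'$, separating $f(y/|\rho|)$ from the weight $g(y):=e^{-y\cdot(\rho/|\rho|)}R^N(y)$. This gives the bound $\norm{g}_{L^{p'}(\{y>0\})}\,\norm{f(\cdot/|\rho|)}_{L^p(\{y>0\})}$. The second factor is exactly $|\rho|^{n/p}\norm{f}_{L^p}$ after undoing the dilation inside the norm (here one uses $f\in L^p$ on all of $\{x>0\}$, or extends by zero). So it remains to check that $\norm{g}_{L^{p'}(\{y>0\})}$ is bounded uniformly in the direction. Writing $\sigma_j=\Re(\rho_j/|\rho|)>0$, the modulus $|g(y)|= e^{-\sum_j \sigma_j y_j}|R^N(y)|$ is dominated by $C\,e^{-\sum_j\sigma_j y_j}(1+|y|)^N$, and since $|\rho_j|\le|\rho|$ forces $\sum_j\sigma_j^2\le 1$ while $\Re\rho_j>0$ on the variety $\rho\cdot\rho=0$ — combined with the additional constraint used in the proof of Theorem~\ref{main} that $\Re\rho_j>\frac1{2\sqrt n}$, hence $\sigma_j$ bounded below — the integral $\int_{y>0}e^{-p'\sum_j\sigma_j y_j}(1+|y|)^{Np'}dy$ is finite and bounded by a constant depending only on the lower bound for the $\sigma_j$, on $N$, and on $p'$.

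One subtlety worth flagging: the lemma as stated only assumes $\Re\rho_j>0$, not a uniform lower bound, so strictly the constant could degenerate as some $\sigma_j\to 0$. In the application (equations \eqref{eq:5}--\eqref{orthoFree2}) the relevant $\rho$ always satisfy $\Re\rho_j>\tfrac1{2\sqrt n}\,$ — indeed $\rho/|\rho|$ ranges in a compact subset of $\{\Re\zeta_j>0\}$ — so I would either incorporate that hypothesis or simply note that the constant $C$ may depend on $\min_j\Re(\rho_j/|\rho|)$; this is harmless for the proof of Theorem~\ref{main}. The main (and essentially only) obstacle is bookkeeping: getting the powers of $|\rho|$ exactly right through the dilation and the Hölder split, and making sure the $L^{p'}$-norm of the exponential-times-polynomial weight is genuinely direction-independent on the cone of $\rho$'s actually used. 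No deeper ideas are needed — this is a pure scaling plus Hölder estimate, the workhorse lemma that feeds the decay rates into \eqref{orthoFree2}.
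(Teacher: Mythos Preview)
Your proposal is correct and follows essentially the same route as the paper: write $\rho=s\theta$ with $s=|\rho|$, substitute $x=y/s$ to pull out $s^{-(N+n)}$, then apply H\"older to split off $\norm{f(\cdot/s)}_{L^p}=s^{n/p}\norm{f}_{L^p}$ from $\norm{e^{-y\cdot\theta}R^N(y)}_{L^{p'}}$. Your discussion of the $\theta$-dependence of the constant is in fact more careful than the paper's own proof, which simply records the constant as $C_{\theta,n,q,R}$ and relies (as you observe) on $\theta$ being fixed---or ranging in a compact set---in the application.
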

\begin{proof}
Let $\rho = s \cdot \theta$, where $\theta\in\C^n$, $\abs{\theta} = 1$ and $s>0$. Then
\begin{eqnarray}\nn
  \int_{x>0} e^{-s x \cdot \theta} R^N(x) f(x) dx
&=&
  \frac{1}{s^{N+n}} \int_{y>0}
  e^{-y\cdot \theta} R^N(y) f\left(\tfrac{y}{s}\right) dy
\\\nn
  &\leq& \frac{1}{s^{N+n}} \norm{e^{-y\cdot \theta} R^N(y)}_{L^q}
   \norm{f\left(\tfrac{y}{s} \right)}_{L^p}
\\\nn
&=&
  \frac{C_{\theta,n,q,R}}{s^{N+n}} s^{n/p} \norm{f}_{L^p}
\end{eqnarray}
\end{proof}

 The lemma gives us a bound on the first two terms on the
 right hand side of (\ref{orthoFree2})
\begin{multline}\label{eq:6}
   \abs{\int_{N_{\epsilon}}
    e^{-x\cdot \rho}m Q^{N+1}\tilde{v}^{N+1}}
  + \abs{\int_{N_{\epsilon}} e^{-x\cdot\rho} \psi m (P^N + Q^{N+1}\tilde{v}^{N+1})} \\
  \leq \frac{C}{\abs{\rho}^{N+n+1}} \norm{m
    \tilde{v}^{N+1}}_{L^\infty} + \frac{C}{\abs{\rho}^{N+n
      - n/p}} \norm{\psi m \tilde{v}^{N+1}}_{L^p},
\end{multline}
which combines with \eqref{cgolpest} to yield
\begin{equation}\nn
  \leq \norm{m \tilde{v}^{N+1}}_{L^\infty} \left(
    \frac{C}{\abs{\rho}^{N+n+1}} +
    \frac{C}{\abs{\rho}^{N+n-n/p}} \cdot
    \frac{C}{\abs{\rho}} \right)
 \leq
  \frac{C}{\abs{\rho}^{N+n + (1-n/p)}}
\end{equation}
Theorem \ref{cgolp} allows us to choose any $2 \leq
p<\infty$, say $p=2n$, so the right hand side of
(\ref{eq:6}) is bounded by\footnote{This is where we make
  essential use of the  \(L^{p}\)  estimates with  \(p>2\)  for
  \(\psi\) in Theorem \ref{cgolp}. We need
  \(1-\frac{n}{p}\) to be positive in order to show that
  these terms are dominated by the Laplace transform of
  the harmonic polynomial, which is bounded from below by
  \(|\rho|^{-(N+n)}\).}
\begin{equation}\nn
\leq \frac{C}{\abs{\rho}^{N+n+1/2}}.
\end{equation}
and consequently
\begin{equation}
  \label{eq:7}
  \abs{\int_{N_{\epsilon}} e^{-x\cdot \rho} m P^N } \le
  \frac{C}{\abs{\rho}^{N+n+1/2}} +  C e^{-\tau |\rho|\epsilon}
\end{equation}
Because $m(x) - 1$ vanishes at $x=0$, we have
\begin{eqnarray}\nn
\abs{\int_{x>0} e^{-x\cdot \rho} P^N(x) \big( m(x) - 1
  \big) dx}
&=& \abs{\int_{x>0} e^{-x\cdot\rho} \tilde{Q}^{N+1}(x)
  \tilde{m}(x) dx}
\\\nn
&\leq& \frac{C}{\abs{\rho}^{N+n+1}} \norm{\tilde{m}}_{L^\infty}
\end{eqnarray}
combining with equation (\ref{eq:7}) implies
\begin{equation}
  \nn
  \abs{\int_{N_{\epsilon}} e^{-x\cdot \rho}  P^N} \le
  \frac{C}{\abs{\rho}^{n+N+1/2}} +  C e^{-\tau |\rho|\epsilon}
\end{equation}

On the other hand, Theorem \ref{LTharmonic} tells us that
\begin{equation}\nn
\abs{\int_{x>0} e^{-x\cdot \rho} P^N(x) dx } \geq \frac{C}{\abs{\rho}^{N+n}}
\end{equation}
with $C$ nonzero after a suitable choice of $\theta = \rho/\abs{\rho}$, and consequently that 
\begin{equation}\nn
\abs{\int_{N_{\epsilon}} e^{-x\cdot \rho} P^N(x) dx } \geq
\frac{C}{\abs{\rho}^{N+n}} - C e^{-\tau |\rho|\epsilon}
\end{equation}

Hence we arrive at the contradiction that
\begin{equation}\nn
\frac{C}{\abs{\rho}^{N+n}} \leq \abs{\int_{N_\epsilon} e^{-x\cdot\rho} P^N(x) dx } \leq \frac{C}{\abs{\rho}^{N+n+1/2}}
\end{equation}
  for all large  \(|\rho|\) and the theorem is proved.
\end{proof}
It remains to prove Theorem \ref{cgolp} and Theorem
\ref{LTharmonic}, which are the subjects of Section
\ref{sec:besovestimates} and Section \ref{sectionPolynomial}.

\section{Estimates for Fundamental Solutions}
\label{sectionFive}

The proof of Theorem \ref{cgolp} will rely on an
estimate of the solution to 

\begin{equation}\label{eq:12}
P_{\rho}(D)\psi := (\Delta -2 \rho \cdot \nabla) \psi = f.
\end{equation}

Although we will work in different norms, we follow the
outline in \cite{RuizNotes} and begin by
 estimating the convolution
\(||\chi_{\varepsilon}*g||_{L^{\infty}}\) where  \(\chi\)
is a Schwartz class function, 
\begin{eqnarray}\nn
   g(\xi) = \frac{1}{P_{\rho}(\xi)}
 \qquad\text{and}\qquad
 \chi_{\varepsilon}(\xi) =
 \frac{1}{\varepsilon^{n}}\chi\left(\frac{\xi}{\varepsilon}\right)
 \end{eqnarray}
We will prove these estimates for a fairly general \(P\),
using a geometric approach
similar to that in
\cite{AgmonHormander}. The key properties of the
symbol \(P(\xi)\) are the codimension of its
characteristic variety (the set \(\M = P^{-1}(0)\)) and
the order to which it vanishes as
\(\xi\rightarrow\M\). The dimension of \(\M\) tells us the
behavior of the solutions to the homogeneous differential
equation, while the order of vanishing tells us the
behavior of the the particular solutions \(G*f\). In the
case of equation (\ref{eq:12}), the codimension is 2 and
\(P\) vanishes simply on \(\M\).

\begin{theorem}
\label{RuizEst}
Suppose that $\chi(x) \in \mathscr{S}(\R^n,\C)$ and $\chi_\varepsilon (x) := \varepsilon^{-n} \chi(\frac{x}{\varepsilon})$. If $P(\xi)$ satisfies
\begin{enumerate}[i)]
\item $P: \R^n \to \R^k$ is smooth
\item $\M = P^{-1}\{0\}$ is compact
\item $DP_{|\M}$ has constant rank, and
\item $\mathop{\liminf}\limits_{\abs{\xi} \to \infty} \abs{P} \geq B > 0$
\end{enumerate}
then
\begin{enumerate}[a)]
\item $\M$ is a smooth embedded codimension $k$ manifold in $\R^n$
\item $\norm{ \chi_\varepsilon \ast \delta_{\M}}_{L^\infty} \leq \frac{C}{\varepsilon^k}$
\item\label{item:1} If $P$ is real or complex valued ($k=1$ or $2$), then
\begin{equation}\nn
\norm{\chi_\varepsilon \ast \frac{1}{P}}_{L^\infty} \leq \frac{C}{\varepsilon}.
\end{equation}
Moreover, if $k \geq 2$ and $F$ is a complex valued function satisfying $\abs{F(P)} \leq \frac{1}{\abs{P}}$ then
\begin{equation}\nn
\norm{\chi_\varepsilon \ast F(P)}_{L^\infty} \leq \frac{C}{\varepsilon}.
\end{equation}
\end{enumerate}
\end{theorem}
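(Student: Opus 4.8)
The plan is to handle the three assertions in order, with (c) carrying essentially all the weight. \textbf{Part (a)} is soft: since in the relevant case $n\ge k$, constant rank of $DP$ on $\M$ forces that rank to be the maximal value $k$, so $0$ is a regular value of $P$ near $\M$ (the rank is lower semicontinuous, hence $\{\operatorname{rank}DP=k\}$ is open and contains $\M$); the implicit function theorem exhibits $\M$ locally as a codimension-$k$ graph, and with (ii) this makes $\M$ a compact embedded submanifold. Using compactness, fix once and for all $\delta\in(0,B)$ so small that on $U:=\{|P|<\delta\}$ the singular values of $DP$ stay bounded away from $0$ and $\infty$; then, by (iv), $U$ is a tubular neighborhood of $\M$, it is foliated by the compact codimension-$k$ submanifolds $S_c:=P^{-1}(c)$ with $|c|<\delta$, and the whole family $\{S_c\}$ has \emph{uniformly bounded geometry} (uniform bounds on second fundamental forms and on normal injectivity radii), because the implicit function theorem applies across the family with uniform constants. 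This uniformity is what lets me run a single estimate simultaneously on all the slices.

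\textbf{Part (b)} reduces, via $|\chi(z)|\le C_N(1+|z|)^{-N}$, to the claim that $\int_\M(1+|x-y|/\varepsilon)^{-N}\,d\sigma(y)\le C\varepsilon^{n-k}$ uniformly in $x$ (for $N$ large; take $0<\varepsilon\le1$, the case $\varepsilon\ge1$ being trivial). Let $y_0$ be a foot of the perpendicular from $x$ and $d:=\operatorname{dist}(x,\M)$; when $d\ge1$ every $|x-y|\ge d\ge1$ and the integrand is $\le C\varepsilon^N$, so assume $d<1$. Split $\M$ into $\M\cap B(y_0,r_0)$ and the rest; on the rest $|x-y|$ is bounded below by a fixed constant so again the integrand is $\le C\varepsilon^N$; on $\M\cap B(y_0,r_0)$, using $x-y_0\perp T_{y_0}\M$ and $C^2$-flatness one has $|x-y|^2\gtrsim d^2+|y-y_0|^2$, and after parametrizing by $v\in\R^{n-k}$ and substituting $v=\varepsilon w$ the integral is $\lesssim\varepsilon^{n-k}\int_{\R^{n-k}}(1+|w|)^{-N}\,dw$. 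The same computation, now keeping $d$, gives the \emph{distance-refined} layer estimate I will need: for any slice $S$ from the uniformly-nice family and any bounded density $h$,
\begin{equation}\nn
\bigl|\chi_\varepsilon\ast(h\,\delta_S)(x)\bigr|\;\le\;\frac{C\,\norm{h}_{L^\infty(S)}}{\varepsilon^{k}}\Bigl(1+\tfrac{\operatorname{dist}(x,S)}{\varepsilon}\Bigr)^{-N'},
\end{equation}
with $C$ and $N'$ uniform over the family.

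\textbf{Part (c)}: split $F(P)=F(P)\mathbf 1_U+F(P)\mathbf 1_{\{|P|\ge\delta\}}$. On $\{|P|\ge\delta\}$ we have $|F(P)|\le 1/\delta$ (for $k=1$ also $|1/P|\le1/\delta$, here (iv) excludes zeros of $P$ away from $\M$), so that piece contributes at most $\delta^{-1}\norm{\chi}_{L^1}=O(1)\le C/\varepsilon$. On $U$ apply the coarea formula with coarea Jacobian $J_P:=\sqrt{\det(DP\,DP^{\mathsf{T}})}$, which is bounded above and below on $U$:
\begin{equation}\nn
\chi_\varepsilon\ast\bigl(F(P)\mathbf 1_U\bigr)(x)=\int_{|c|<\delta}F(c)\,g_x(c)\,d^kc,\qquad g_x(c):=\chi_\varepsilon\ast\Bigl(\tfrac{1}{J_P}\,\delta_{S_c}\Bigr)(x).
\end{equation}
By the refined estimate of (b), $|g_x(c)|\le C\varepsilon^{-k}\bigl(1+|c-P(x)|/\varepsilon\bigr)^{-N'}$, since $\operatorname{dist}(x,S_c)\asymp|c-P(x)|$ for $x\in U$ (and $x\notin U$ is handled by plain decay). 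For $k\ge2$ (this includes $1/P$ when $k=2$) use $|F(c)|\le1/|c|$, substitute $c=\varepsilon c'$, and reduce to $\sup_{a\in\R^k}\int_{\R^k}|c'|^{-1}(1+|c'-a|)^{-N'}\,d^kc'<\infty$ — finite precisely because $|c'|^{-1}$ is locally integrable on $\R^k$ when $k\ge2$ — which gives the bound $C/\varepsilon$. For $k=1$ and $F=1/P$, $1/c$ is odd, so the integral is the principal value $\int_0^\delta\bigl(g_x(c)-g_x(-c)\bigr)c^{-1}\,dc$; combining the pointwise bound $|g_x(\pm c)|\le C/\varepsilon$, the $\varepsilon$-scale Lipschitz regularity $|g_x(c)-g_x(-c)|\le C\varepsilon^{-1}(|c|/\varepsilon)$, and the decay in $|c|$, one gets $|g_x(c)-g_x(-c)|\le C\varepsilon^{-1}\,\frac{|c|/\varepsilon}{(1+|c|/\varepsilon)^{N'+1}}$, and after $c=\varepsilon u$ the integral is again $\le C/\varepsilon$.

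\textbf{The main obstacle.} The real point is obtaining the \emph{sharp} exponent $\varepsilon^{-1}$ in (c) rather than the $\varepsilon^{-k}$ that slicing plus (b) would naively give; the improvement comes entirely from integrating $F(c)$ against the $c$-profile of $g_x$, which works only if $g_x(c)$ genuinely decays once $|c|\gg\varepsilon$ — hence the need for the distance-refined version of (b). The case $k=1$ is the most delicate: $|c|^{-1}$ is not locally integrable in one dimension, so the estimate must be rescued through the oddness of $1/c$ together with the transverse ($c$-direction) regularity of the sliced layer potential, and proving that regularity with constants uniform over $\{S_c\}$ is exactly where the bounded-geometry observation from the setup earns its keep.
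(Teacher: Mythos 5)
Your argument is correct, and it reaches the same two structural pillars as the paper's proof --- a uniform layer-potential bound $\lesssim\varepsilon^{-k}$ on each slice of a tubular neighborhood, followed by integration of $|F|\le 1/|c|$ over a $k$-dimensional transverse ball to recover the factor $\varepsilon^{k-1}$ --- but it organizes them differently. The paper decomposes $N_\delta(\M)$ by the closest-point/normal-coordinate diffeomorphism $\eta:N_\delta(\M)\to\M\times B^k_\delta(0)$, does the fine slicing only inside $N_\varepsilon(\M)$ (handling $\R^n\setminus N_\varepsilon(\M)$ with the crude bound $|F(P)|\le C_\delta/\varepsilon$ there), and transfers the $\delta_\M$ estimate to the slices $\M\times\{s\}$ by showing that pullbacks of $\varepsilon$-mollifiers under uniformly Lipschitz diffeomorphisms are again $\varepsilon$-mollifiers. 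You instead slice the whole tube $\{|P|<\delta\}$ by the coarea formula into the level sets $S_c=P^{-1}(c)$, which forces you to prove the distance-refined decay $|g_x(c)|\le C\varepsilon^{-k}(1+|c-P(x)|/\varepsilon)^{-N'}$ (together with $d(x,S_c)\asymp|c-P(x)|$) in order to control the contribution from $\varepsilon\ll|c|<\delta$; the paper avoids this refinement entirely at the cost of a less symmetric splitting. Your route buys a cleaner identification of the transverse profile $c\mapsto g_x(c)$ and makes the mechanism behind the gain from $\varepsilon^{-k}$ to $\varepsilon^{-1}$ very explicit (local integrability of $|c|^{-1}$ in $\R^k$ for $k\ge2$); the paper's route requires less quantitative geometry of the foliation, since the fibers $\M\times\{s\}$ come for free from one diffeomorphism rather than from uniform bounded-geometry bounds across $\{S_c\}$. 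For $k=1$ the two cancellations are the same in different clothes: your oddness-of-$1/c$ argument with the transverse Lipschitz bound on $g_x$ is the paper's subtraction $\chi_\varepsilon(x,y)-\chi_\varepsilon(x,m(y))$ controlled by the gradient bound on the mollifier. Two small points you should make explicit: the transverse regularity $|g_x(c)-g_x(-c)|\le C|c|\varepsilon^{-2}$ requires differentiating both the slice $S_c$ and the density $J_P^{-1}$ in $c$, which is where the uniform lower bound on the singular values of $DP$ on the tube is actually used; and your symmetric principal value in $c$ may differ from the paper's regularization (which subtracts the value at the foot point $m(y)$) by a bounded density carried on $\M$, a discrepancy that is harmless because its convolution is already $O(\varepsilon^{-1})$ by part (b) with $k=1$.
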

\begin{remark}
We define
\begin{equation}\nn
\langle \delta_\M, \phi \rangle := \int_\M \phi d\sigma_\M
\end{equation}
where $d\sigma_\M$ is the natural element of surface area on $\M$.
\end{remark}
\begin{remark}
\label{PV}
If $k\geq 2$ then $\frac{1}{P} \in L^1_{loc}$ is a well
defined distribution on the whole of $\R^n$. If $k=1$ we
will use the principal value
\begin{equation}\nn
\langle \frac{1}{P}, \phi \rangle := \int_{N_\delta(\M)} \left( \phi(y) - \phi(m(y)) \right) \frac{dy}{P(y)} + \int_{\R^n \setminus N_\delta(\M)} \phi(y) \frac{dy}{P(y)},
\end{equation}
where $N_\delta(\M)$ is a neighborhood of  \(\M\)  and
$m(y)$ associates with each  \(y\in N_\delta(\M)\) the
closest point in  \(\M\). Both of which are described more
explicitly in the proposition below. 
\end{remark}

The following proposition recalls some immediate
consequences of the implicit function theorem. We don't
include a proof.
\begin{proposition}
\label{IFT}
Suppose that i), ii) and iii) in Theorem \ref{RuizEst} are satisfied. Then
\begin{enumerate}[A)]
\item $DP|_\M$ has full rank $k$
\item $\M$ is a smooth compact embedded submanifold of $\R^n$
\item $\exists\ \delta > 0$ and a Lipschitz constant $L_\delta$ such that writing
\begin{equation}\nn
N_\delta(\M) = \{ x \in \R^n \mid d(x,\M) \leq \delta \},
\end{equation}
every $x \in N_\delta(\M)$ has a unique closest point $m(x)$ in $\M$. The map
\begin{equation}\nn
\eta: N_\delta(\M) \to \M \times B_\delta^k(0)
\end{equation}
defined by
\begin{equation}\label{eq:normalcoords}
\eta(x) = \left( m(x), \abs{x - m(x)} \frac{DP_{m(x)} \left( x - m(x) \right)}{\abs{DP_{m(x)} \left(x-m(x)\right)}}\right)
\end{equation}
is a global diffeomorphism from $N_\delta(\M)$ onto $\M \times B_\delta^k(0)$. Both $\eta$ and $\eta^{-1}$ are Lipshitz with uniform constant $L_\delta$.
\item Every point $m \in \M$ has a $\delta$-neighborhood $U_\delta(m) \subset \M$ that is diffeomorphic to a ball in $\R^{n-k}$, i.e.
\begin{equation}\nn
\psi_m : U_\delta(m) := B_\delta^n(m) \cap \M \to B_\delta^{n-k}(0).
\end{equation}
Both $\psi_m$ and $\psi_m^{-1}$ are Lipschitz with uniform constant $L_\delta$.
\end{enumerate}
\end{proposition}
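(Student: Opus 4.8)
The proposition collects standard consequences of the implicit function / constant rank theorem, and the only substantive work is to make the tube radius $\delta$ and the Lipschitz constant $L_\delta$ \emph{uniform} over $\M$; the compactness hypothesis ii) of Theorem~\ref{RuizEst} is what makes this possible. I take iii) to mean that the constant rank of $DP$ on $\M$ equals the maximal value $k$, equivalently that $0$ is a regular value of $P$; this is forced, since the normal coordinates \eqref{eq:normalcoords} are degenerate otherwise. Item A) then just records this, and item B) is the preimage theorem: $\M=P^{-1}(0)$ is a smooth embedded submanifold of $\R^n$ of codimension $k$ with tangent space $T_p\M=\ker DP_p$ at every $p\in\M$, and it is compact by ii).

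For the quantitative statements I would first extract from compactness the two bounds that feed a quantitative implicit function theorem: the restriction of $DP_p$ to the normal space $N_p\M:=(T_p\M)^{\perp}$ is a linear isomorphism onto $\R^k$, with the norm of its inverse bounded above by a constant $M_0$ independent of $p$, and $\norm{D^2P}\le M_1$ on a fixed neighbourhood of $\M$. These bounds also control the second fundamental form of $\M$ (expressed through the Hessian of $P$), hence force $\M$ to have positive reach (a uniform $\delta$-tube on which the nearest-point projection is single-valued and smooth), and they produce a $\delta>0$, depending only on $M_0$, $M_1$ and a lower bound for the reach, such that every $x$ with $d(x,\M)\le\delta$ has a unique nearest point $m(x)\in\M$, the map $x\mapsto m(x)$ is $C^\infty$ on $N_\delta(\M)\setminus\M$ with uniformly bounded derivative, and $x-m(x)\in N_{m(x)}\M$. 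Then $DP_{m(x)}(x-m(x))\neq 0$ off $\M$ and $\eta$ of \eqref{eq:normalcoords} maps $N_\delta(\M)$ into $\M\times B_\delta^k(0)$. To see $\eta$ is the asserted diffeomorphism: off $\M$ its differential is block-triangular with invertible diagonal blocks --- one from $Dm$, one from the linear isomorphism $DP_{m(x)}|_{N_{m(x)}\M}$ --- so $\eta$ is there a local $C^\infty$ diffeomorphism; it is globally injective because the triple $\bigl(m(x),\abs{x-m(x)},\text{direction of }DP_{m(x)}(x-m(x))\bigr)$ recovers $x$ via that same isomorphism, and this inversion also identifies the image as all of $\M\times B_\delta^k(0)$; finally $\eta$ extends continuously across $\M$, carrying $\M$ to $\M\times\{0\}$, and the bounds $M_0$, $M_1$, together with the bound on $Dm$, turn into uniform Lipschitz bounds for $\eta$ and $\eta^{-1}$ --- only Lipschitz, not $C^1$, at $\M$ itself, which is why the statement claims no more. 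This is item C).

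Item D) is the local graph description of $\M$, again made uniform by compactness: near $m\in\M$ the orthogonal projection $\pi_m:\R^n\to T_m\M$ restricts to a map $\M\to T_m\M$ whose differential at $m$ is the identity, hence a $C^\infty$ diffeomorphism of a neighbourhood of $m$ in $\M$ onto a ball $B_\delta^{n-k}(0)$ in $T_m\M\cong\R^{n-k}$; the radius of that ball and the Lipschitz constants of $\psi_m$ and $\psi_m^{-1}$ may be taken independent of $m$, because the rate at which $T_p\M$ tilts away from $T_m\M$ for $p$ near $m$ is governed by the uniform second fundamental form bound. Passing to a finite subcover of $\M$ by these neighbourhoods, together with C), yields a common $\delta$ and $L_\delta$.

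\textbf{Main obstacle.} The one point needing real care is this uniformity: upgrading the pointwise inverse/implicit function theorems to a quantitative form whose neighbourhood size and derivative estimates depend solely on upper bounds for the inverse of $DP$ on the normal bundle and for $\norm{D^2P}$ near $\M$, and then using compactness to pass from a per-point statement to one uniform over $\M$. Two smaller points also deserve attention: showing the nearest-point projection is genuinely well defined on a full $\delta$-tube (positivity of the reach of the compact $C^2$ manifold $\M$), and checking the \emph{global} injectivity of $\eta$, not merely its local invertibility off $\M$.
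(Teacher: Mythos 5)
The paper does not actually prove this proposition: it is introduced with the sentence ``The following proposition recalls some immediate consequences of the implicit function theorem. We don't include a proof,'' so there is nothing to compare your argument against line by line. Your proposal supplies the omitted proof along the standard route --- regular value theorem for A) and B), then a quantitative tubular neighbourhood theorem, with compactness of $\M$ converting pointwise bounds (on the inverse of $DP_p$ restricted to the normal space and on $D^2P$ near $\M$) into a uniform tube radius $\delta$ and a uniform Lipschitz constant $L_\delta$ --- and that is exactly the right way to fill the gap. Two of your side remarks are worth keeping: first, hypothesis iii) as literally stated (``constant rank'') does not by itself imply conclusion A) (``full rank $k$''; consider $P(x,y)=(x,0)$), so one must, as you do, read iii) as saying the constant rank is $k$, i.e.\ that $0$ is a regular value --- which is what holds for the symbol $P_\rho(\xi)=-\xi\cdot\xi-2i\rho\cdot\xi$ actually used later; second, the map $\eta$ of \eqref{eq:normalcoords}, whose fibre component is $v\mapsto\abs{v}\,DP_{m}v/\abs{DP_{m}v}$ on the normal space, is homogeneous of degree one but not linear, hence bi-Lipschitz but not $C^1$ across $\M$ itself, so ``diffeomorphism'' in C) must be understood away from $\M\times\{0\}$; you correctly note that only the Lipschitz bounds are used downstream (in Propositions \ref{theorem1} and \ref{theorem2}). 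Your treatment is a sketch --- positive reach of a compact embedded $C^2$ submanifold and the quantitative implicit function theorem are invoked rather than proved, and the surjectivity onto the full ball $B^{n-k}_\delta(0)$ in D) really requires composing the tangent-plane projection with a harmless rescaling --- but these are genuinely classical facts, and the level of detail matches what the authors chose to leave implicit.
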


Two corollaries (also stated without proof) are:
\begin{corollary}
For $x\in \R^n$,
\begin{equation}\nn
\operatorname{Area}\left( B_r^n(x) \cap \M \right) := \int_{\M\cap B_r^n(x)} d\sigma_\M \leq C_\delta r^{n-k}
\end{equation}
\end{corollary}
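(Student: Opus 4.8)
The plan is to split the estimate into two regimes according to whether $r$ is large or small compared with the scale $\delta$ furnished by Proposition~\ref{IFT}. When $r \geq \delta/2$ the bound will be immediate from compactness of $\M$: writing $A := \operatorname{Area}(\M) < \infty$, one has
\begin{equation}\nn
\operatorname{Area}\big(B_r^n(x)\cap\M\big) \leq A \leq \frac{A\,2^{n-k}}{\delta^{n-k}}\, r^{n-k},
\end{equation}
so that any $C_\delta \geq A\,2^{n-k}/\delta^{n-k}$ suffices there. The remaining, substantive case is $0 < r < \delta/2$, where one may also assume $B_r^n(x)\cap\M \neq \emptyset$, since otherwise there is nothing to prove.

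In that case I would fix a point $m_0 \in B_r^n(x)\cap\M$ and use the chain of inclusions $B_r^n(x)\cap\M \subseteq B_{2r}^n(m_0)\cap\M \subseteq B_\delta^n(m_0)\cap\M = U_\delta(m_0)$, valid since $2r < \delta$; thus the whole intersection sits inside the single chart $\psi_{m_0}: U_\delta(m_0)\to B_\delta^{n-k}(0)$ of Proposition~\ref{IFT}D). Because $\psi_{m_0}$ is $L_\delta$-Lipschitz, the image $E := \psi_{m_0}\big(B_{2r}^n(m_0)\cap\M\big)$ has diameter at most $L_\delta\operatorname{diam}\big(B_{2r}^n(m_0)\big) \leq 4L_\delta r$, hence lies in a Euclidean ball of radius $4L_\delta r$ in $\R^{n-k}$ and so has Lebesgue measure at most $\omega_{n-k}(4L_\delta r)^{n-k}$, where $\omega_{n-k}$ denotes the volume of the unit ball in $\R^{n-k}$.

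The last step is to transfer this volume bound back to the surface measure through the bi-Lipschitz map $\psi_{m_0}^{-1}$. Since $\psi_{m_0}^{-1}$ is $L_\delta$-Lipschitz it is differentiable almost everywhere by Rademacher's theorem, with operator norm, hence all singular values, at most $L_\delta$; therefore its Jacobian $J\psi_{m_0}^{-1} = \sqrt{\det\big((D\psi_{m_0}^{-1})^{T}D\psi_{m_0}^{-1}\big)}$ is bounded by $L_\delta^{n-k}$, and the area formula for Lipschitz maps gives
\begin{equation}\nn
\operatorname{Area}\big(B_r^n(x)\cap\M\big) \leq \operatorname{Area}\big(B_{2r}^n(m_0)\cap\M\big) = \int_E J\psi_{m_0}^{-1}(y)\,dy \leq L_\delta^{n-k}\,\omega_{n-k}\,(4L_\delta r)^{n-k}.
\end{equation}
Combining the two regimes then yields the claim with $C_\delta := \max\big\{A\,2^{n-k}/\delta^{n-k},\ \omega_{n-k}(4L_\delta)^{n-k}L_\delta^{n-k}\big\}$; note that $C_\delta$ is uniform precisely because the Lipschitz constant $L_\delta$ in Proposition~\ref{IFT} is uniform over $m\in\M$.

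The argument is essentially a routine covering estimate, and I expect the only delicate point to be the final transfer step: the charts in Proposition~\ref{IFT} are asserted merely to be bi-Lipschitz, not smooth, so one must invoke Rademacher differentiability together with the Lipschitz area formula rather than a naive smooth change of variables. If one prefers to avoid this, the alternative is to pass to a genuine smooth atlas for $\M$ (which exists since $\M$ is a smooth embedded submanifold) while observing that the uniform constant $L_\delta$ still controls the relevant Jacobians; but the Lipschitz version above is cleaner and self-contained.
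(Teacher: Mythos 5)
Your proof is correct; the paper states this corollary without proof, so there is nothing to compare against, but your bi-Lipschitz chart argument is exactly the reasoning the paper implicitly relies on (compare the bound $\operatorname{area}(\M\cap B_\varepsilon^n(x))\leq L_\delta^{n-k}\varepsilon^{n-k}$ used in the proof of Proposition~\ref{theorem1}). The only cosmetic simplification available is that the final transfer step needs neither Rademacher's theorem nor the area formula: the elementary fact that an $L$-Lipschitz map increases $s$-dimensional Hausdorff measure by at most a factor $L^{s}$, together with the identification of $d\sigma_\M$ with $\mathcal{H}^{n-k}\restr{\M}$, gives the same bound directly.
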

\begin{corollary}
For $x \in N_\delta(\M)$,
\begin{equation}\nn
\abs{P(x)} \geq C_\delta d(x,\M).
\end{equation}
\end{corollary}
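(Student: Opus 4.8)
The plan is to combine a first-order Taylor expansion of $P$ about the nearest point of $\M$, valid in a thin tube around $\M$, with an elementary compactness argument away from $\M$. The linear-algebra input is this: by Proposition \ref{IFT}, $DP_m$ has full rank $k$ at every $m\in\M$, so $T_m\M=\ker DP_m$ and the normal space $N_m\M:=(\ker DP_m)^{\perp}$ is the $k$-dimensional row space of $DP_m$, on which $DP_m$ acts injectively. Hence $\abs{DP_m v}\ge\sigma(m)\abs{v}$ for every $v\in N_m\M$, where $\sigma(m)>0$ is the smallest nonzero singular value of $DP_m$. Since $DP$ is continuous and of constant rank $k$ along the compact manifold $\M$, the map $m\mapsto\sigma(m)$ is continuous and strictly positive, so $\sigma(m)\ge 2c$ for some $c>0$ and all $m\in\M$.

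Fix $\delta_0\le\delta$ small enough that the modulus of continuity $\omega$ of $DP$ on a fixed compact neighborhood of $\M$ satisfies $\omega(\delta_0)\le c$. For $x$ with $d(x,\M)\le\delta_0$, let $m=m(x)\in\M$ be its unique closest point (Proposition \ref{IFT}), so $\abs{x-m}=d(x,\M)$ and, by the first-order optimality condition for minimizing $p\mapsto\abs{x-p}^2$ over $\M$, $x-m\in N_m\M$. With $\gamma(t)=m+t(x-m)$ and $P(m)=0$,
\[
P(x)=\int_0^1 DP_{\gamma(t)}(x-m)\,dt = DP_m(x-m)+\int_0^1\big(DP_{\gamma(t)}-DP_m\big)(x-m)\,dt,
\]
and since $\gamma(t)$ stays in the $\delta_0$-ball about $m$,
\[
\abs{P(x)}\ \ge\ \abs{DP_m(x-m)}-\omega(\delta_0)\abs{x-m}\ \ge\ \big(2c-\omega(\delta_0)\big)d(x,\M)\ \ge\ c\,d(x,\M).
\]
On the remaining region $\{\,\delta_0\le d(\cdot,\M)\le\delta\,\}$, which is compact and disjoint from $\M=P^{-1}(0)$, $\abs{P}$ attains a positive minimum $\epsilon_0$, so there $\abs{P(x)}\ge\epsilon_0\ge(\epsilon_0/\delta)\,d(x,\M)$. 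Taking $C_\delta=\min\{c,\epsilon_0/\delta\}$ proves the corollary.

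The one point that needs care is the uniformity of the constant $c$ over all of $\M$: this is exactly where the constant-rank hypothesis iii) of Theorem \ref{RuizEst} — upgraded to full rank $k$ by Proposition \ref{IFT} — together with the compactness of $\M$ enter, ensuring that the smallest nonzero singular value of $DP_m$ does not degenerate as $m$ ranges over $\M$. The rest is a routine Taylor estimate and a standard compactness argument.
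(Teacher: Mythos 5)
The paper states this corollary without proof (``Two corollaries (also stated without proof) are:''), so there is nothing to compare against; your argument is correct and is the standard one the authors presumably had in mind. The three ingredients --- a uniform positive lower bound $2c$ on the smallest nonzero singular value of $DP_m$ obtained from the constant-rank hypothesis plus compactness of $\M$, the observation that $x-m(x)$ lies in $(\ker DP_{m(x)})^{\perp}$ so that $\abs{DP_{m(x)}(x-m(x))}\ge 2c\,d(x,\M)$, and the absorption of the Taylor remainder via the modulus of continuity of $DP$ on a thin tube --- are all in order, and the compactness argument on the annulus $\{\delta_0\le d(\cdot,\M)\le\delta\}$ correctly handles the remaining region. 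This cleanly fills the gap the authors left.
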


\medskip
We are going to use diffeomorphisms to rewrite integrals over manifolds as integrals over Euclidean balls, where we can do some explicit calculations. Since our integrals will involve convolutions with Schwartz class functions, we need to describe the properties that the pullbacks of such functions inherit.

\begin{definition}
\label{eMollDef}
A family of $\varepsilon$-mollifiers, $\chi_\varepsilon(x,y)$, defined on $\O_1 \times \O_2 \subset \R^n \times \R^n$ satisfies
\begin{enumerate}[i)]
\item $\sup_{x\in\O_1} \int_{\O_2} \abs{\chi_\varepsilon (x,y)}dy \leq C$
\item $\abs{\chi_\varepsilon(x,y)} \leq \frac{C_N}{\varepsilon^n} \left(\frac{\varepsilon}{\abs{x-y}} \right)^N$ for all $N \in\N$
\item $\abs{\nabla_y \chi_\varepsilon(x,y)} \leq \frac{C_N}{\varepsilon^{n+1}} \left( \frac{\varepsilon}{\abs{x-y}} \right)^N$ for all $N\in\N$
\end{enumerate}
\end{definition}

\begin{lemma}
If $\chi\in \mathscr{S}$, then
\begin{equation}\nn
\chi_\varepsilon (x,y) := \frac{1}{\varepsilon^n} \chi\left(\frac{x-y}{\varepsilon}\right)
\end{equation}
is a family of $\varepsilon$-mollifiers defined on $\O_1 \times \O_2 = \R^n \times \R^n$.
\end{lemma}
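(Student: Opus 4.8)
The plan is to verify, one by one, the three conditions in Definition~\ref{eMollDef}, in each case reducing the claim to a statement about the single fixed Schwartz function $\chi$ via the rescaling substitution $z=(x-y)/\varepsilon$. This substitution removes every occurrence of $x$, $y$, and $\varepsilon$, so the uniformity of all constants will come for free.

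For i) I would change variables in the $y$-integral with $x$ held fixed: the Jacobian is $\varepsilon^{n}$, so $\int_{\R^{n}}\abs{\chi_{\varepsilon}(x,y)}\,dy=\int_{\R^{n}}\abs{\chi(z)}\,dz=\norm{\chi}_{L^{1}}$, which is finite since $\chi\in\mathscr{S}$ and plainly independent of $x$; this is i) with $C=\norm{\chi}_{L^{1}}$. For ii) I would invoke the elementary decay estimate for Schwartz functions: for each $N\in\N$ there is $C_{N}$ with $\abs{\chi(z)}\le C_{N}(1+\abs{z})^{-N}\le C_{N}\abs{z}^{-N}$; substituting $z=(x-y)/\varepsilon$ and multiplying by $\varepsilon^{-n}$ gives exactly $\abs{\chi_{\varepsilon}(x,y)}\le C_{N}\varepsilon^{-n}\bigl(\varepsilon/\abs{x-y}\bigr)^{N}$, the estimate being vacuous on the diagonal $x=y$. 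For iii) I would differentiate directly, $\nabla_{y}\chi_{\varepsilon}(x,y)=-\varepsilon^{-n-1}(\nabla\chi)\bigl((x-y)/\varepsilon\bigr)$, and then apply the same decay estimate with the Schwartz function $\nabla\chi$ in place of $\chi$, which yields iii) with the expected extra power $\varepsilon^{-1}$.

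I do not expect any genuine obstacle: the lemma is a routine unwinding of the scaling combined with the defining rapid decay of $\mathscr{S}$. The only matters worth stating with care are that one should use the crude bound $(1+\abs{z})^{-N}\le\abs{z}^{-N}$ rather than anything sharper near $z=0$ (the target inequalities in ii) and iii) being already trivial there), and that the constants are uniform in $x,y,\varepsilon$ — which, as noted above, is automatic once everything has been rescaled back to $\chi$ itself.
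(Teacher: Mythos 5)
Your verification is correct and is exactly the routine unwinding the paper has in mind; the paper in fact states this lemma without proof, treating it as immediate from the scaling $z=(x-y)/\varepsilon$ and the rapid decay of Schwartz functions, which is precisely your argument. The one point you rightly flag---using the crude bound $(1+\abs{z})^{-N}\le\abs{z}^{-N}$ so that ii) and iii) hold with the stated right-hand sides, which blow up harmlessly on the diagonal---is handled correctly.
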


\begin{definition}
The pullback of a family of $\varepsilon$-mollifiers is defined\footnote{It seems natural to include a factor of $\operatorname{det}(D\psi)$ in \eqref{psiPB}, to treat $\chi_\varepsilon dx_1 \wedge \cdots \wedge dx_n$ as an $n$-form. We don't add the factor because it makes the proof of Lemma \ref{lemmaEmollPB} slightly longer.} to be
\begin{equation}
\label{psiPB}
\psi^*\chi_\varepsilon (x,y) := \chi_\varepsilon(\psi(x), \psi(y)).
\end{equation}
\end{definition}

The next lemma explains why we need to work with general $\varepsilon$-mollifiers.
\begin{lemma}
\label{lemmaEmollPB}
If $\psi$ and $\psi^{-1}$ are uniformly Lipschitz
diffeomorphisms, then the pullback of a family of
$\varepsilon$-mollifiers is a family of
$\varepsilon$-mollifiers.
\end{lemma}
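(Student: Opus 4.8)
The plan is to verify the three defining properties of an $\varepsilon$-mollifier for $\psi^*\chi_\varepsilon$, exploiting only the two-sided Lipschitz bound: there is a constant $L$ with $L^{-1}\abs{x-y}\le\abs{\psi(x)-\psi(y)}\le L\abs{x-y}$ for all $x,y$, and correspondingly the Jacobian determinant of $\psi$ is bounded above and below. I would begin with property ii). Writing $\abs{\psi^*\chi_\varepsilon(x,y)}=\abs{\chi_\varepsilon(\psi(x),\psi(y))}\le \frac{C_N}{\varepsilon^n}\bigl(\frac{\varepsilon}{\abs{\psi(x)-\psi(y)}}\bigr)^N$ and inserting $\abs{\psi(x)-\psi(y)}\ge L^{-1}\abs{x-y}$ gives $\abs{\psi^*\chi_\varepsilon(x,y)}\le \frac{C_N L^N}{\varepsilon^n}\bigl(\frac{\varepsilon}{\abs{x-y}}\bigr)^N$, which is property ii) with the new constant $C_N L^N$ (valid for every $N$, so no loss). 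Property i) then follows by integrating this decay estimate: $\int_{\O_2}\abs{\psi^*\chi_\varepsilon(x,y)}\,dy$ can either be bounded by the change of variables $z=\psi(y)$ (whose Jacobian is bounded below, so $dy\le L^n\,dz$) reducing it to $L^n\sup_{x}\int\abs{\chi_\varepsilon}\,dz\le C L^n$, or directly from property ii) by splitting into $\abs{x-y}\le\varepsilon$ and $\abs{x-y}>\varepsilon$ and using $N=0$ on the first piece and $N=n+1$ on the second.

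For property iii) I would use the chain rule: $\nabla_y\bigl(\chi_\varepsilon(\psi(x),\psi(y))\bigr)=(D\psi(y))^{T}(\nabla_y\chi_\varepsilon)(\psi(x),\psi(y))$. Here the first subtlety appears — a merely Lipschitz diffeomorphism need not be $C^1$, so $D\psi$ exists only almost everywhere. This is harmless: $\psi$ is differentiable a.e.\ with $\abs{D\psi}\le L$ a.e.\ by Rademacher's theorem, and the $\varepsilon$-mollifier conditions are pointwise-a.e.\ statements, so we only need the bound a.e. Granting that, $\abs{\nabla_y\psi^*\chi_\varepsilon(x,y)}\le L\,\abs{(\nabla_y\chi_\varepsilon)(\psi(x),\psi(y))}\le L\cdot\frac{C_N}{\varepsilon^{n+1}}\bigl(\frac{\varepsilon}{\abs{\psi(x)-\psi(y)}}\bigr)^N\le \frac{C_N L^{N+1}}{\varepsilon^{n+1}}\bigl(\frac{\varepsilon}{\abs{x-y}}\bigr)^N$, again using the lower Lipschitz bound on $\psi$, which is exactly property iii).

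The step I expect to be the main (and essentially the only) obstacle is the regularity issue just flagged: reconciling the derivative condition iii) with the fact that "uniformly Lipschitz diffeomorphism" does not by itself guarantee a continuous derivative. The clean fix is to record that in every application in this paper the relevant maps $\eta,\eta^{-1},\psi_m,\psi_m^{-1}$ from Proposition~\ref{IFT} are in fact smooth (they are built from the symbol $P$ and the nearest-point projection to the smooth manifold $\M$), with derivatives bounded uniformly in terms of $\delta$; the Lipschitz language is used only to get \emph{uniform} constants. So one may either (a) prove the lemma under the literal hypothesis using Rademacher as above, or (b) simply add "$C^1$" to the hypotheses, which costs nothing downstream. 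Everything else is the routine bookkeeping of tracking how the factors $L$ and $L^n$ propagate through the three inequalities, and the observation that enlarging each $C_N$ to $C_N L^{N+1}$ preserves the definition since the family $\{C_N\}$ is arbitrary to begin with.
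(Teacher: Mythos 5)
Your proposal is correct and follows essentially the same route as the paper: the lower Lipschitz bound $\abs{\psi(x)-\psi(y)}\ge L^{-1}\abs{x-y}$ for property ii), the change of variables with the Jacobian bound for property i), and the chain rule for property iii), with the same bookkeeping of constants $C_N\mapsto C_N L^{N+1}$. Your side remark about Rademacher is not needed in the paper's reading, since ``diffeomorphism'' there already carries differentiability and the Lipschitz hypothesis only supplies the uniform constants, exactly as you note in your alternative (b).
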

\begin{proof}
Let $L_1$ and $L_2$ be the Lipschitz constants for $\psi$ and $\psi^{-1}$, respectively. For i), we estimate
\begin{eqnarray}\nn
\sup_{x\in \psi^{-1}(\O_1)} \int_{\psi^{-1}(\O_2)}
\chi_\varepsilon(\psi(x),\psi(y)) dy &=& \sup_{x\in \O_1}
\int_{\O_2} \chi_\varepsilon(x,y)
\frac{dy}{\operatorname{det}(D\psi(y))}
\\\nn
&\leq& \sup_{x\in\O_1} L_2^n \int_{\O_2} \chi_\varepsilon(x,y) dy
\end{eqnarray}
Next
\begin{equation}\nn
\abs{\chi_\varepsilon (\psi(x), \psi(y))} \leq \frac{C_N}{\varepsilon^n} \left(\frac{\varepsilon}{\abs{\psi(x) - \psi(y)}} \right)^N \leq \frac{C_N L_2^N}{\varepsilon^n} \left( \frac{\varepsilon}{\abs{x-y}} \right)^N.
\end{equation}
Finally, for iii),
\begin{multline}\nn
\abs{\nabla_y \chi_\varepsilon(\psi(x),\psi(y))} = \abs{ D\psi \cdot \nabla_v \chi_\varepsilon(u,v)}_{\big|\substack{u=\psi(x)\\v=\psi(y)}} \\
\leq L_1 \frac{C_N}{\varepsilon^{n+1}} \left( \frac{\varepsilon}{\abs{\psi(x) - \psi(y)}} \right)^N \leq \frac{C_N L_1 L_2^N}{\varepsilon^{n+1}} \left( \frac{\varepsilon}{\abs{x-y}} \right)^N
\end{multline}
\end{proof}

\begin{proposition}
\label{theorem1}
Let $\chi_\varepsilon$ be a family of
$\varepsilon$-mollifiers defined on $\O_1 \times \O_2
\subset \R^n \times \R^n$ and $\M$ a compact embedded
submanifold of $\R^n$ of codimension $k$. Then
\begin{equation}\nn
  \sup_{x\in\O_1} \int_{\M\cap\O_2} \abs{ \chi_\varepsilon(x,m) d\sigma_\M(m)} \leq \frac{C}{\varepsilon^k}
\end{equation}
for small $\varepsilon$.
\end{proposition}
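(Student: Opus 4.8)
The plan is to exploit the rapid decay of $\chi_\varepsilon(x,\cdot)$ away from the diagonal (property ii) in Definition~\ref{eMollDef}) together with the area bound $\operatorname{Area}(B_r^n(x)\cap\M)\le C_\delta r^{n-k}$ from the corollary above, combined through a dyadic decomposition of $\M$ into spherical shells centered at $x$. Throughout, $x\in\O_1$ is fixed and all constants will be seen to be independent of $x$, so that taking the supremum at the end costs nothing.

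First I would split $\M\cap\O_2$ into the ball $A_0=\{m\in\M : |x-m|\le\varepsilon\}$ and the shells $A_j=\{m\in\M : 2^{j-1}\varepsilon<|x-m|\le 2^j\varepsilon\}$ for $j\ge 1$; these cover $\M$ up to the point $m=x$, which has $\sigma_\M$-measure zero when $k<n$ and is absorbed into $A_0$ when $k=n$ (where $d\sigma_\M$ is counting measure). On $A_0$ I would use property ii) with exponent $N=0$ to get the uniform bound $|\chi_\varepsilon(x,m)|\le C_0\varepsilon^{-n}$, and the area corollary to get $\sigma_\M(A_0)\le \operatorname{Area}(B_\varepsilon^n(x)\cap\M)\le C_\delta\varepsilon^{n-k}$, whence $\int_{A_0}|\chi_\varepsilon(x,m)|\,d\sigma_\M(m)\le C\varepsilon^{-k}$.

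On the shell $A_j$ I would use property ii) with an exponent $N$ to be chosen, giving $|\chi_\varepsilon(x,m)|\le C_N\varepsilon^{-n}2^{-(j-1)N}$ there, together with $\sigma_\M(A_j)\le \operatorname{Area}(B_{2^j\varepsilon}^n(x)\cap\M)\le C_\delta(2^j\varepsilon)^{n-k}$. Multiplying yields $\int_{A_j}|\chi_\varepsilon(x,m)|\,d\sigma_\M(m)\le C_N C_\delta\, 2^N \varepsilon^{-k}\, 2^{j(n-k-N)}$. Since $k$ is a codimension we have $n-k\ge 0$, so choosing $N>n-k$ (for instance $N=n-k+1$) makes $\sum_{j\ge1}2^{j(n-k-N)}$ a convergent geometric series, and summing gives $\sum_{j\ge1}\int_{A_j}|\chi_\varepsilon(x,m)|\,d\sigma_\M(m)\le C\varepsilon^{-k}$. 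Adding the $A_0$ contribution and taking the supremum over $x\in\O_1$ produces the asserted bound $C/\varepsilon^k$ for all small $\varepsilon$.

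I do not expect a genuine obstacle here; the points that need a little care are minor. First, the area corollary is invoked for radii $r=2^j\varepsilon$ of all sizes, which is legitimate: once $r$ exceeds $\operatorname{diam}(\M)+\operatorname{dist}(x,\M)$ the left-hand side is simply the total surface area of the compact manifold $\M$, which is finite and still bounded by $C_\delta r^{n-k}$ (indeed we never needed finitely many shells, as the infinite series already converges). Second, the uniformity in $x$ hinges only on the constants $C_N$ of Definition~\ref{eMollDef} and $C_\delta$ of the corollary being independent of $x$, which they are. Third, in the boundary case $k=n$ the manifold is a finite point set, $\operatorname{Area}$ is cardinality, the bound reads $\operatorname{Area}(B_r^n(x)\cap\M)\le C_\delta$, and the estimate degenerates correctly to $|\chi_\varepsilon(x,m)|\le C_0\varepsilon^{-n}$ summed over the finitely many points of $\M$ near $x$.
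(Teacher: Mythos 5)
Your proof is correct and arrives at the bound by a genuinely different, somewhat more elementary execution of the same underlying idea (rapid decay of the mollifier against polynomial growth of $\operatorname{Area}(B_r^n(x)\cap\M)$). The treatment of the near ball $\{m:\abs{x-m}\le\varepsilon\}$ is identical to the paper's. For the rest of $\M$, the paper splits into a far region $\abs{x-m}\ge\delta$ (disposed of crudely via the total area of $\M$) and an intermediate annulus $\varepsilon<\abs{x-m}<\delta$, which it pulls back through the Lipschitz chart $\psi_{m(x)}$ of Proposition \ref{IFT} D) and estimates by the polar-coordinate integral $\int_\varepsilon^\delta \varepsilon^{N-n}\rho^{n-k-1-N}\,d\rho$ with $N>n-k$. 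Your dyadic shells $A_j$ together with the area corollary at the scales $r=2^j\varepsilon$ produce the geometric series $\sum_j 2^{j(n-k-N)}$, which is precisely the discrete counterpart of that integral, with the same choice $N>n-k$. What you gain: no coordinate chart or change of variables appears inside the proof (the Lipschitz structure enters only through the area corollary, whose validity for all $r$ you rightly justify, large $r$ being trivial), the intermediate and far regions are handled uniformly, and you sidestep the paper's replacement of $\abs{x-m}$ by $\abs{m(x)-m}$ inside the mollifier bound. What you give up is exactly that replacement: phrasing the estimate in terms of $\abs{m(x)-m}$ is what makes Remark \ref{notQuiteEmoll} immediate, and that remark is reused in the proof of Proposition \ref{theorem2}. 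Your argument adapts easily (take shells $\{2^{j-1}\varepsilon<\abs{m(x)-m}\le 2^j\varepsilon\}$ and apply the area corollary centered at $m(x)$), but that adaptation would need to be recorded if your proof replaced the paper's. Your side remarks --- the degenerate case $k=n$ and the use of the exponent $N=0$ in Definition \ref{eMollDef} ii) as a uniform bound $C_0\varepsilon^{-n}$ --- are consistent with how the paper itself uses these facts.
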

\begin{proof}
  We may assume that $\M \subset \O_2$. Let $\delta$ be
  the uniform constant in Proposition \ref{IFT}. Fix
  $x\in\O_1$ and assume that $\varepsilon <
  \delta$. According to ii) in Definition \ref{eMollDef}
  we have
\begin{equation}\nn
\abs{ \int_{\M \cap \{ m \mid \abs{x-m} \geq \delta \}} \chi_\varepsilon(x,m) d\sigma_\M (m) } \leq \frac{C_N}{\varepsilon^n} \left( \frac{\varepsilon}{\delta} \right)^N \operatorname{area}(\M).
\end{equation}
On the other hand
\begin{eqnarray}\nn
  \abs{ \int_{\M \cap \{m \mid \abs{x-m} \leq \varepsilon\}}
    \chi_\varepsilon (x,m) d\sigma_\M(m) } 
  &\leq& \frac{C_0}{\varepsilon^n}
\operatorname{area} \left(\M \cap B_\varepsilon^n(x)\right) \\\nn
  &\leq& \frac{C_0}{\varepsilon^n} L_\delta^{n-k} \varepsilon^{n-k} = \frac{C_0 L_\delta^{n-k}}{\varepsilon^k},
\end{eqnarray}
where $L_\delta$ is the Lipschitz constant. To estimate
the remaining part of the integral, we use local
coordinates $\psi$, based at $m(x)$, the point on $\M$
closest to $x$, as described in Proposition \ref{IFT}
D). Let $\Psi = \psi^{-1}$. Then
\begin{equation}\nn
\abs{ \int_{\M \cap \{ m \mid \varepsilon < \abs{x-m} < \delta \}} \chi_\varepsilon(x,m) d\sigma_\M(m) } = \abs{ \int_{B_\delta^{n-k}(0) \setminus B_\varepsilon^{n-k}(0)} \Psi^*\chi_\varepsilon \Psi^*d\sigma_\M}.
\end{equation}
Because
\begin{eqnarray*}
\abs{\chi_\varepsilon(x,m)} &\leq&
\frac{C_N}{\varepsilon^n} \left( \frac{\varepsilon}{
    (\abs{x-m(x)}^2 + \abs{m(x) - m}^2)^{1/2}} \right)^N 
\\
&\leq& \frac{C_N}{\varepsilon^n} \left( \frac{\varepsilon}{\abs{m(x) - m}} \right)^N = \frac{C_N}{\varepsilon^n} \left( \frac{\varepsilon}{\rho} \right)^N,
\end{eqnarray*}
where $\rho = \abs{m(x) - m}$, we may use polar coordinates centered at $m(x)$ to see that
\begin{eqnarray}\nn
\int_{B_\delta^{n-k}(0) \setminus B_\varepsilon^{n-k}(0)}
\abs{ \Psi^*\chi_\varepsilon \Psi^*d\sigma_\M} 
&\leq&
L_\delta^{n-k} \!\! \int_{S^{n-k-1}}
\!\!\!\!\!\!\!\!\!\!\!\!\!\!d \sigma_{S^{n-k-1}} \!
\int_\varepsilon^\delta \frac{C_N}{\varepsilon^n} \left(
  \frac{\varepsilon}{\rho} \right)^N \rho^{n-k-1} d\rho
\\\nn
&\leq& L_\delta^{n-k} \omega_{n-k-1} C_N \varepsilon^{N-n}
\frac{\abs{\delta^{n-k-N} -
    \varepsilon^{n-k-N}}}{\abs{n-k-N}}
\\\nn
&\leq& L_\delta^{n-k} \omega_{n-k-1} C_N \frac{\varepsilon^{-k}}{\abs{n-k-N}}
\end{eqnarray}
where $S_{n-k-1}$ is the unit sphere in $\R^{n-k}$ and $\omega_{n-k-1}$ its surface measure. The claim follows by taking $N > n-k$.
\end{proof}

\begin{remark}
\label{notQuiteEmoll}
In the proof of Proposition \ref{theorem1}, when considering $x\in N_\delta(\M)$, we only required that the mollifier satisfy
\begin{equation}
\nn
\abs{\chi_\varepsilon(x,y)} \leq \frac{C_N}{\varepsilon^n} \left( \frac{\varepsilon}{\abs{m(x)-m(y)}} \right)^N.
\end{equation}
We will use this observation in the proof of Proposition
\ref{theorem2} below, which will finish the proof of
Theorem \ref{RuizEst}.
\end{remark}

\begin{proposition}
\label{theorem2}
Let $\chi_\varepsilon$ be a family of
$\varepsilon$-mollifiers, 
$\M$, $P$ and $k \geq 2$ satisfy the conditions in Theorem
\ref{RuizEst}, and $F:\R^k \to \C$ satisfy $\abs{F(P)}
\leq \frac{C}{\abs{P}}$. Then, for sufficiently small $\varepsilon$,
\begin{equation}\nn
\abs{\int_{\R^n} \chi_\varepsilon(x,y) F(P(y)) dy} \leq \frac{C}{\varepsilon}.
\end{equation}
If $k=1$ then
\begin{equation}\nn
\abs{\int_{\R^n} \frac{\chi_\varepsilon(x,y)}{P(y)} dy} \leq \frac{C}{\varepsilon},
\end{equation}
where $\frac{1}{P}$ is defined by principal value as in Remark \ref{PV}.
\end{proposition}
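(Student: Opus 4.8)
The plan is to reduce the Euclidean convolution bound to the surface-measure bound of Proposition~\ref{theorem1} by slicing $\R^n$ into the characteristic variety $\M$ and its normal directions, using the normal coordinates $\eta$ from Proposition~\ref{IFT}~C). First I would split the integral into the region $N_\delta(\M)$ near the variety and its complement. On $\R^n \setminus N_\delta(\M)$ we have $\abs{P(y)} \geq B' > 0$ by the $\liminf$ hypothesis iv) (shrinking $\delta$ if necessary so that $\abs P$ is bounded below off the tube), so $\abs{F(P(y))} \leq C/\abs{P(y)}$ is bounded, and property i) of the $\varepsilon$-mollifier family gives $\abs{\int_{\R^n \setminus N_\delta(\M)} \chi_\varepsilon(x,y) F(P(y))\,dy} \leq C \leq C/\varepsilon$ for small $\varepsilon$. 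So the whole content is in the tube $N_\delta(\M)$.

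Inside $N_\delta(\M)$ I would change variables by $\eta(y) = (m(y), t)$ with $m(y) \in \M$ and $t \in B_\delta^k(0)$ the scaled normal coordinate from \eqref{eq:normalcoords}; since $\eta$ and $\eta^{-1}$ are uniformly Lipschitz, the Jacobian factors are bounded above and below, $d\sigma_\M(m)\,dt$ controls $dy$ up to constants, and by the Corollary after Proposition~\ref{IFT} we have $\abs{P(y)} \geq C_\delta\, d(y,\M) = C_\delta\abs{t}$ (after absorbing the Lipschitz distortion between $\abs t$ and the true distance). Thus $\abs{F(P(y))} \leq C/\abs{t}$. The integral becomes, up to constants,
\begin{equation}\nn
\int_{B_\delta^k(0)} \frac{1}{\abs{t}} \left( \int_{\M} \abs{\chi_\varepsilon\bigl(x, \eta^{-1}(m,t)\bigr)}\, d\sigma_\M(m) \right) dt .
\end{equation}
For fixed $t$, the inner integrand is a function of $x$ and $m$, and by Remark~\ref{notQuiteEmoll} together with Lemma~\ref{lemmaEmollPB} (the pullback of an $\varepsilon$-mollifier under the Lipschitz map $m \mapsto \eta^{-1}(m,t)$ is again an $\varepsilon$-mollifier family, uniformly in $t$) it satisfies the mollifier bounds needed to apply Proposition~\ref{theorem1}. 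Hence the inner integral is $\leq C/\varepsilon^k$ uniformly in $x$ and $t$. Then
\begin{equation}\nn
\int_{B_\delta^k(0)} \frac{1}{\abs t} \cdot \frac{C}{\varepsilon^k}\, dt = \frac{C}{\varepsilon^k} \int_0^\delta \frac{1}{r} r^{k-1}\, dr \cdot \omega_{k-1} = \frac{C'}{\varepsilon^k},
\end{equation}
which is $O(1/\varepsilon^k)$, worse than claimed. The fix — and the point where the estimate must be done more carefully — is to \emph{not} bound the inner integral crudely by $C/\varepsilon^k$ for all $t$: when $\abs t \geq \varepsilon$ the decay property ii) of the mollifier forces $\abs{\chi_\varepsilon(x,\eta^{-1}(m,t))} \leq C_N \varepsilon^{-n}(\varepsilon/\abs t)^N$ whenever $x$ is far from that slice, and a more precise version of the Proposition~\ref{theorem1} argument gives the inner integral a bound that improves for $\abs t \gtrsim \varepsilon$; splitting $\abs t < \varepsilon$ and $\abs t \geq \varepsilon$ and using $k \geq 2$ so that $\int_0^\varepsilon r^{k-1}\,dr/r = \varepsilon^{k-1}/(k-1)$ absorbs one power of $\varepsilon$, one lands on the desired $C/\varepsilon$. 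The hard part is precisely this quantitative bookkeeping near $\M$: tracking how the mollifier mass on a normal slice at distance $\abs t$ decays, and checking that the $k \geq 2$ codimension is exactly what makes the radial integral $\int_0^\delta \abs t^{-1}\abs t^{k-1}\,d\abs t$ contribute $\varepsilon^{k-1}$ rather than a logarithm or a divergence.

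For the case $k=1$, the same tube decomposition applies but $1/P$ is only conditionally integrable across $\M$, so I would invoke the principal-value definition from Remark~\ref{PV}: the subtracted term $\phi(m(y))$ exploits the regularity of $\chi_\varepsilon$ in $y$ — property iii) gives $\abs{\chi_\varepsilon(x,y) - \chi_\varepsilon(x,m(y))} \leq \varepsilon^{-1}\abs{y - m(y)}\sup\abs{\nabla_y\chi_\varepsilon} = O(\varepsilon^{-n-1}\abs t \,(\varepsilon/\abs t)^N)$ along the one-dimensional normal fiber, so the singular factor $1/\abs t$ is cancelled and the remaining integral is handled exactly as above, while the part outside $N_\delta(\M)$ is again bounded by property i). I expect no new obstacle in the $k=1$ case beyond carrying the principal-value cancellation through the change of variables.
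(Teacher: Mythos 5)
Your overall strategy (tube decomposition around $\M$, normal coordinates from Proposition~\ref{IFT}~C), the bound $\abs{F(P)}\le C/\abs{t}$, and Proposition~\ref{theorem1} applied slice by slice) is the right one, but there is a genuine gap exactly where you flag "the hard part." Your first pass yields $O(\varepsilon^{-k})$, and the fix you gesture at does not work as stated: for $\abs{t}\ge\varepsilon$ you invoke the decay $\abs{\chi_\varepsilon(x,\eta^{-1}(m,t))}\le C_N\varepsilon^{-n}(\varepsilon/\abs{t})^N$, but property ii) controls $\chi_\varepsilon$ in terms of $\abs{x-y}$, which for $y$ on the slice at normal coordinate $t$ is comparable to $\max(\abs{m(x)-m},\abs{t-t_x})$ where $t_x$ is the normal coordinate of $x$ itself. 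When $x$ lies in the tube at distance $\approx\abs{t}$ from $\M$ (i.e.\ $t\approx t_x$), there is no decay in $\abs{t}$ at all, the inner integral over $\M\times\{t\}$ is genuinely of size $\varepsilon^{-k}$, and your radial integral over the annulus $\abs{t}\approx d(x,\M)$ still produces $O(\varepsilon^{-k})$ unless you track the dependence on $t_x$ carefully. So the argument is not closed.

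The paper sidesteps this entirely by splitting at the $\varepsilon$-tube rather than the $\delta$-tube. On $\R^n\setminus N_\varepsilon(\M)$ one has the pointwise bound $\abs{F(P)}\le C/\abs{P}\le C_\delta/\varepsilon$ (by the corollary $\abs{P}\ge C_\delta\, d(\cdot,\M)$ inside $N_\delta$, and $\abs{P}\ge B'>0$ outside), so property i) of the mollifier family immediately gives $C/\varepsilon$ there; no slicing is needed outside the $\varepsilon$-tube. The normal-coordinate slicing and Proposition~\ref{theorem1} are then used only on $N_\varepsilon(\M)$, where the radial integral is $\int_{\abs{s}<\varepsilon}\abs{s}^{-1}\,ds\approx\varepsilon^{k-1}$ and multiplies the slicewise bound $C/\varepsilon^k$ to give $C/\varepsilon$ directly --- this is the same use of $k\ge2$ you identify, but restricted to the region where it suffices. (A small separate case, $x\notin N_\delta(\M)$, is handled by mollifier decay plus $F(P)\in L^1$.) Your $k=1$ sketch is in the right spirit, but has the analogous issue: the decay factor in your bound for $\widetilde{\chi_\varepsilon}$ should be in terms of $\abs{m(x)-m(y)}$ (as in Remark~\ref{notQuiteEmoll}), not $\abs{t}$, and the first term of the principal value should again be cut at $N_\varepsilon$ and handled by the pointwise bound $1/\abs{P}\le C/\varepsilon$.
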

\begin{proof}
We assume that $\varepsilon < \frac{\delta}{2}$, with $\delta$ the constant in Proposition \ref{IFT} C). Because $\abs{F(P)} \leq \frac{C}{\abs{P}} \leq \frac{C_\delta}{\varepsilon}$ on $N_\delta(\M) \setminus N_\varepsilon(\M)$ and $\leq C_\delta$ outside $N_\delta(\M)$,
\begin{equation}\nn
\int_{\R^n \setminus N_\varepsilon(\M)} \abs{\chi_\varepsilon F(P) } dy \leq \sup_{y \in \R^n \setminus N_\varepsilon(\M)} \abs{ F(P(y)) } \norm{\chi_\varepsilon}_{L^1} \leq \frac{C}{\varepsilon}.
\end{equation}

For the moment, we restrict to the case that $k= \operatorname{codim}(\M) \geq 2$, so that $F(P) \in L^1(\R^n)$. If $x \notin N_\delta(\M)$, then
\begin{equation}\nn
\sup_{y\in N_\epsilon(\M)} \abs{\chi_\varepsilon (x, y)} \leq \left( \frac{\varepsilon}{\delta/2} \right)^N \frac{C_N}{\varepsilon^n}
\end{equation}
so that
\begin{equation}\nn
\sup_{x \notin N_\delta(\M)} \int_{N_\varepsilon(\M)} \abs{\chi_\varepsilon F(P)} dy \leq \int_{N_\varepsilon (\M)} \abs{F(P)} dy \left( \frac{\varepsilon}{\delta/2} \right)^N \frac{C_N}{\varepsilon^n}.
\end{equation}
and choosing $N \geq n-1$ shows that this bounded by a
constant over  \(\varepsilon\).

If $x \in N_\delta(\M)$, we can use the diffeomorphism
$\eta$ and its inverse $H$, described in C) of Proposition
\ref{IFT} to obtain
\begin{multline}\nn
  \sup_{x\in N_\delta(\M)} \int_{N_\varepsilon (\M)} \abs{\chi_\varepsilon(x,y) F(P(y)) } dy \\
  = \sup_{u \in \M \times B_\delta^k(0)} \int_{\M \times
    B_\varepsilon^k(0)} \abs{H^*\chi_\varepsilon(u,v)
    F(P(H(v))) } \frac{d\sigma_\M (m)
    ds}{\abs{\operatorname{det}(D\eta)}}
\end{multline}
where $v = (m, s) \in \M \times
B_\varepsilon^k(0)$. Because $\abs{F(P(H(s)))} \leq
\frac{C}{\abs{P(y)}} \leq \frac{C}{\abs{s}}$ here and
$\abs{\operatorname{det}(D\eta)}$ is bounded from below by
the the  \(n\)-th power of the Lipschitz constant
\(L_{2}\), this is bounded by  
\begin{equation}
\label{eq1}
\leq CL_2^{-n} \int_{B_\varepsilon^k(0)} \left( \sup_{u \in \M \times B_\delta^k(0)} \int_\M \abs{H^*\chi_\varepsilon} d\sigma_\M \right) \frac{1}{\abs{s}} ds.
\end{equation}
For each fixed $s$,
\begin{equation}\nn
\abs{H^*\chi_\varepsilon} \leq \frac{C_N}{\varepsilon^n} \left( \frac{\varepsilon}{\abs{u - (m,s)}} \right)^N,
\end{equation}
so according to Remark \ref{notQuiteEmoll} we can apply Proposition \ref{theorem1} to the manifold $\M \times \{s\}$ to show that the quantity in brackets in \eqref{eq1} satisfies
\begin{equation}\nn
\sup_{u \in \M \times B_\delta^k(0)} \int_\M \abs{H^* \chi_\varepsilon} d\sigma_\M \leq \frac{C}{\varepsilon^k}.
\end{equation}
This implies the estimate
\begin{equation}\nn
\sup_{x\in N_\delta(\M)} \int_{N_\varepsilon(\M)} \abs{ \chi_\varepsilon(x,y) F(P(y))} dy \leq \int_{B_\varepsilon^k(0)} \frac{C}{\varepsilon^k} \frac{ds}{\abs{s}} = \frac{C}{\varepsilon^k} \cdot \varepsilon^{k-1},
\end{equation}
which completes the proof in the codimension $2$ case.

\medskip
If $\M$ is of codimension one we have the definition
\begin{equation}\nn
\langle \frac{1}{P}, \phi \rangle = \int_{N_\delta(\M)} \left( \phi(y) - \phi(m(y)) \right) \frac{dy}{P(y)} + \int_{\R^n \setminus N_\delta(\M)} \phi(y) \frac{dy}{P(y)}
\end{equation}
and note that this agrees with $\int_{\R^n} \phi \frac{dy}{P}$ for all $\phi \in C^\infty_0(\R^n \setminus \M)$. With this definition,
\begin{equation}\nn
\frac{1}{P} \ast \chi_\varepsilon = \int_{\R^n\setminus N_\varepsilon} \chi_\varepsilon \frac{dy}{P} + \int_{N_\varepsilon(\M)} \frac{ \left( \chi_\varepsilon (x,y) - \chi_\varepsilon(x,m(y)) \right)}{P(y)} dy.
\end{equation}
We estimate the first integral as we did  in the
codimension $\geq 2$ case, and  rewrite the second as
\begin{equation}\nn \int_\M \left[
\int_{-\varepsilon}^\varepsilon \frac{
\chi_\varepsilon(m(x), \nu(x), m(y), \nu(y)) -
\chi_\varepsilon(m(x),\nu(x),m(y),0)}{P(m(y), \nu,y)} d\nu(y)
\right] d\sigma_\M.
\end{equation}
where  \(m(x)\) again denotes the closest point on
\(\M\), and  \(\nu(x)\) are the normal coordinates, given
explicitly by the second component on the right hand side
of equation (\ref{eq:normalcoords}).
If we call the integral in brackets $\widetilde{\chi_\varepsilon}$, we see that
\begin{equation}\nn
\abs{\widetilde{\chi_\varepsilon}} \leq \frac{C_N}{\varepsilon^n} \abs{\frac{\varepsilon}{\abs{m(x) - m(y)}} }^N
\end{equation}
so that Remark \ref{notQuiteEmoll} applies here, and we may conclude that $\abs{\int_\M \widetilde{\chi_\varepsilon} d\sigma_\M} \leq \frac{C}{\varepsilon^k}$ with $k=1$ in this case.
\end{proof}

We need only one application of Theorem \ref{RuizEst} for our
proof of Theorem \ref{cgolp}. We return to (\ref{eq:12}) and set  

\begin{equation}
  \nn
  g(\xi) = \frac{1}{-\xi\cdot\xi - 2i \rho\cdot\xi}
\end{equation}

\begin{proposition}\label{sec:estim-source} There is a constant C, depending only
  on the dimension  \(n\) and \(\chi\in \mathscr{S}(\R^n,\C)\), so that
  \begin{equation}
    \label{eq:14}
    \norm{\chi_{\epsilon}*g}_{\infty} \le \frac{C}{\epsilon\rho}
  \end{equation}
\end{proposition}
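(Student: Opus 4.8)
The plan is to recognize $g = 1/P_\rho$, where $P_\rho(\xi) = -\xi\cdot\xi - 2i\rho\cdot\xi$ is, up to the Fourier convention, the symbol of the operator in \eqref{eq:12}, and to feed this into Theorem \ref{RuizEst} with $k=2$. The obstruction is that $P_\rho$ does not satisfy the hypotheses of that theorem uniformly in $\rho$: writing $\rho = a + ib$ with $a,b\in\R^n$, the condition $\rho\cdot\rho = 0$ forces $a\cdot b = 0$ and $\abs a = \abs b = \abs\rho/\sqrt2 =: \tau$, whence $\Re P_\rho(\xi) = \abs b^2 - \abs{\xi-b}^2$ and $\Im P_\rho(\xi) = -2a\cdot\xi$, so the characteristic variety $P_\rho^{-1}(0)$ is the $(n-2)$-sphere $\{\xi : a\cdot\xi = 0,\ \abs{\xi-b} = \tau\}$, whose radius grows with $\abs\rho$. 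So first I would rescale, $\xi = \tau\zeta$, to turn this into a fixed unit sphere.

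With $\hat a = a/\tau$, $\hat b = b/\tau$ (an orthonormal pair), a one-line computation gives $P_\rho(\tau\zeta) = \tau^2 P_0(\zeta)$ with $P_0(\zeta) = -\abs\zeta^2 + 2\hat b\cdot\zeta - 2i\,\hat a\cdot\zeta$. I would then check that $P_0$ meets hypotheses i)--iv) of Theorem \ref{RuizEst} with $k=2$: its characteristic variety is the compact set $\{\hat a\cdot\zeta = 0,\ \abs{\zeta-\hat b}=1\}$; there $\nabla\Re P_0 = -2(\zeta-\hat b)$ and $\nabla\Im P_0 = -2\hat a$ are nonzero and mutually orthogonal (using $\hat a\cdot\zeta = 0$ and $\hat a\cdot\hat b = 0$), so $DP_0$ has constant rank $2$; and $\abs{P_0(\zeta)} \ge \big|\,\abs\zeta^2 - 2\hat b\cdot\zeta\,\big| \to\infty$ as $\abs\zeta\to\infty$. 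Theorem \ref{RuizEst} (the complex-valued, $k=2$ case) then gives $\norm{\chi_\epsilon * \tfrac1{P_0}}_{L^\infty} \le C/\epsilon$. I would also note that this constant is independent of the direction of $\rho$: precomposing with the orthogonal map carrying $(e_1,e_2)$ to $(\hat a,\hat b)$ turns $P_0$ into the single fixed symbol $-\abs\zeta^2 + 2\zeta_2 - 2i\zeta_1$ and replaces $\chi$ by $\chi\circ O$, which has the same Schwartz seminorms, and the constant in Theorem \ref{RuizEst} depends on the mollifier only through finitely many of those.

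The last step is to transport the estimate back through the dilation. With $\epsilon' := \epsilon/\tau$, the substitution $\eta = \tau\zeta$ in $(\chi_\epsilon * g)(\tau\sigma)$, together with $g(\tau\zeta) = \tau^{-2}/P_0(\zeta)$ and the scaling identity $\chi_\epsilon(\tau z) = \tau^{-n}\chi_{\epsilon'}(z)$, gives
\begin{equation}\nn
(\chi_\epsilon * g)(\tau\sigma) = \tau^{-2}\,\big(\chi_{\epsilon'} * \tfrac1{P_0}\big)(\sigma),
\qquad\text{hence}\qquad
\norm{\chi_\epsilon * g}_{L^\infty} \le \tau^{-2}\cdot\frac{C}{\epsilon'} = \frac{C}{\tau\epsilon} = \frac{\sqrt2\,C}{\abs\rho\,\epsilon},
\end{equation}
which is \eqref{eq:14} (for $\abs\rho$ large enough that $\epsilon' = \sqrt2\,\epsilon/\abs\rho$ lies in the range where Theorem \ref{RuizEst} applies, which is all that is needed downstream in Theorem \ref{cgolp}). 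I expect this bookkeeping to be the only delicate point: the dilation $\xi\mapsto\tau\zeta$ simultaneously shrinks the mollifier width ($\epsilon\mapsto\epsilon/\tau$), which is what produces the extra factor of $\tau = \abs\rho/\sqrt2$ in the denominator and upgrades $1/\epsilon$ to $1/(\epsilon\abs\rho)$, while the homogeneity $P_\rho(\tau\,\cdot\,) = \tau^2 P_0$ and the Jacobian combine to give the $\tau^{-2}$; everything else is a routine verification of the hypotheses of Theorem \ref{RuizEst}.
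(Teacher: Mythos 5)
Your proposal is correct and follows essentially the same route as the paper: rescale $\xi$ by a factor comparable to $\abs{\rho}$ so that the symbol becomes a fixed ($\rho$-independent up to direction) polynomial $\tilde P$ with compact characteristic variety, apply Theorem \ref{RuizEst} with the shrunken mollifier width $\epsilon/\abs{\rho}$, and convert back, with the Jacobian and the homogeneity $P_\rho(\tau\,\cdot) = \tau^2 P_0$ producing the $\abs{\rho}^{-2}$ that combines with $\abs{\rho}/\epsilon$ to give $C/(\epsilon\abs{\rho})$. Your explicit verification of hypotheses i)--iv) for the rescaled symbol and of the uniformity of the constant over directions of $\rho$ is a welcome addition that the paper leaves implicit.
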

\begin{proof}
  Let  \(\rho=s\Theta\) where  \(\Theta\in\C^{n}\) has
  unit norm and  \(s=|\rho|\) . We will apply the estimate in item \ref{item:1} from Theorem
  \ref{RuizEst}, but first we need
   do some scaling
  \begin{eqnarray}
    \nn
    \chi_\epsilon * g(s\eta) &=& -\int
    \chi\left(\frac{s\eta-\xi}{\epsilon}\right)
    \frac{1}{\xi\cdot\xi+2i s\Theta\cdot\xi}\frac{d^{n}\xi}{\epsilon^{n}}
    \\\na{letting  \(\sigma=s\xi\) gives}\nn
    &=&-\frac{1}{s^{2}} \int \chi\left(\frac{\eta-\sigma}{\frac{\epsilon}{s}}\right)
    \frac{1}{\sigma\cdot\sigma+2i\Theta\cdot\sigma}\frac{d^{n}\sigma}{(\frac{\epsilon}{s})^n} 
    \\\nn
    &=&\frac{1}{s^{2}}\chi_{\frac{\epsilon}{s}}*\frac{1}{\tilde{P}}
    \\\na{where  \(\tilde{P}= -\xi\cdot\xi-2i\Theta\cdot\xi\). According to Theorem \ref{RuizEst},\espace}\nn
    \norm{\chi_\epsilon * g(s\eta)}_{\infty}&\le&\frac{1}{s^{2}}\frac{C}{\frac{\epsilon}{s}}
    \le\frac{C}{s\epsilon}
    \\\na{Recalling that \(s = |\rho|\),  
      and that  \(\norm{\chi_\epsilon * g(s\eta)}_{\infty} =
      \norm{\chi_\epsilon * g}_{\infty}\) gives}\nn
    \norm{\chi_\epsilon*g}_{\infty}&\le&\frac{C}{\epsilon|\rho|}
   \end{eqnarray}
\end{proof}

\section{Proof of Theorem \ref{cgolp}}
\label{sec:besovestimates}

In order to prove Theorem \ref{cgolp}, we insert the
ansatz \eqref{eq:1} into \eqref{mFree} to see that
\(\psi\) must satisfy
\begin{eqnarray}
\label{eq:20}
(\Delta -2 \rho \cdot \nabla) \psi = -k^{2}(1-m)(1+\psi) \quad\text{in } D.
\end{eqnarray}
We replace the right hand side of (\ref{eq:20}) using
\begin{equation}
\nn
  Q = -k^{2}(1-m)\Phi_{D}
\end{equation}
 where  \(\Phi_{D}\) is smooth, compactly supported, and
 identically equal to one on the bounded domain  \(D\).
 We seek  \(\psi\) satisfying
 \begin{eqnarray}
\label{eq:21}
(\Delta -2 \rho \cdot \nabla) \psi =
Q(1+\psi)\qquad\text{in } \R^{n}
\end{eqnarray}
 noting that a solution to (\ref{eq:21}) in  \(\R^{n}\)
 will satisfy (\ref{eq:20}) in  an open neighborhood of \(D\). We will construct
 \(\psi\) by summing the series
 \begin{eqnarray}
   \label{eq:28}
   \psi = \Sum_{N=0}^{\infty} \psi^{N}
\\\na{where  \(\psi^{0}=0\) and  the remaining
  \(\psi^{N}\) satisfy\espace }
\label{eq:23}
(\Delta -2 \rho \cdot \nabla) \psi^{N} =
Q\psi^{N-1}
 \end{eqnarray}
 The existence of solutions to (\ref{eq:23}) and the
 convergence of the sum will follow from an estimate of
 solutions to the constant coefficient differential equation

\begin{equation}\label{eq:8}
P_{\rho}(D)\psi := (\Delta -2 \rho \cdot \nabla) \psi = f.
\end{equation}
The simplest estimate would follow from taking the
Fourier transform of both sides and dividing by the symbol
\(P_{\rho}(\xi)\). If we use the letter  \(g(\xi)\) to denote the
reciprocal of  \(P_{\rho}\), we want to estimate

\begin{eqnarray}
\nn
  \widehat{\psi} &=& g \widehat{f}
\\\noalign{\noindent or equivalently}\label{eq:10}
\psi &=& G*f
\end{eqnarray}
where \(*\) denotes convolution and \(G\) is the inverse
Fourier transform of \((2\pi)^{-n/2}g\).
A simple \(L^{\infty}\) estimate for \(g\) does not
hold because of the zeros of \(P\), but these affect the
behavior of \(\psi\) for large \(x\), and our goal is to
prove a strong local estimate. We are willing to prove an
estimate that allows \(\psi\) to grow as
\(x\rightarrow\infty\) in exchange for a good local
estimate, i.e.  \(L^{q}\) for large \(q\) on compact
sets. We will separate the local and global behavior by
writing \(G\), the inverse Fourier transform of \(g\), as
a sum of functions \(G_{j}\) with compact support, and
estimating each separately.\\

We introduce a dyadic partition
of unity. Let 
\begin{equation}\label{eq:32}
1 = \phi_0(s) + \sum_{j=1}^\infty \phi_j(s)
\end{equation}
where $\phi_0$ and $\phi$ are $C^\infty$ even functions of $s\in\R$, and
\begin{gather}\nn
\supp \phi_0 \subset [-2,2]
\\\nn
\supp \phi \subset [\tfrac{1}{2}, 2] 
\\\na{and}\nn
\Phi_j(x) := \phi(\tfrac{\abs{x}}{2^j}) \quad \text{for } j \geq 1, \quad \Phi_0(x) = \phi_0(\abs{x}),
\\\na{so that}\nn
\supp \Phi_j \subset B_{2^{j+1}}(0) \setminus B_{2^{j-1}}(0), \quad \supp \Phi_0 \subset B_2(0).
\end{gather}
We will make use of the fact that
\begin{equation}
  \nn
  \hat{\Phi_{j}}(\xi) = 2^{nj}\hat{\Phi}(2^{j}\xi) =
\frac{\hat{\Phi}(\frac{\xi}{\epsilon})}{\epsilon^{n}}
\end{equation}
which makes the  \(\{\hat{\Phi_{j}}\}\)  a family of  \(\epsilon\)-mollifiers with
 \(\chi=\hat{\Phi}\) and with \(\epsilon=2^{-j}\).\\

We expand  \(\psi\),  \(G\), and  \(f\) with respect
to this partition, i.e

\begin{eqnarray*}
  \psi = \sum \psi_{j} = \sum \Phi_{j}\psi
\\
f  = \sum f_{j} = \sum \Phi_{j}f
\\
G  = \sum G_{j} = \sum \Phi_{j}G
\end{eqnarray*}
so that (\ref{eq:10}) becomes

\begin{eqnarray}
\label{eq:11}
  \psi_{m} = \Phi_{m}\Sum_{k=0}^{\infty}\Sum_{j=0}^{\infty}G_{j}*f_{k}
\end{eqnarray}
If we recall that the support of the
convolution is a subset of the sum of  the
supports, we see that if \(r_1 = 2^{k-1}-2^{j+1} >0 \) or
\(r_2 = 2^{j-1}-2^{k+1} >0 \), the support of \(G_{j}*f_{k}\) is
contained outside the ball of radius \(r_1\) or \(r_2\), respectively. In particular,
this means that 
\begin{eqnarray}
\nn
  \Phi_{m}G_{j}*f_{k} = 0
\\\na{if}\nn
2^{m+1}  < 2^{j-1}- 2^{k+1}
\\\na{which will always be the case if}\nn
j> 3+ \max(k,m)
\end{eqnarray}
so that the second sum in (\ref{eq:11}) is finite
\begin{eqnarray}
  \nn
\psi_{m} = \Phi_{m}\Sum_{k=0}^{\infty}\Sum_{j=0}^{\max(k,m)+3}G_{j}*f_{k}.
\\\na{Taking the Fourier transform gives}\nn
\hat{\psi_{m}}=(2\pi)^{-n}\hat{\Phi_{m}}*\Sum_{k=0}^{\infty}\Sum_{j=0}^{\max(k,m)+3}g_{j}\hat{f_{k}}
\\\na{where  \(g_{j}=\hat{\Phi_{j}}*g = (2\pi)^{n} \hat{G_j}\), so that\espace}\nn
\norm{\hat{\psi_{m}}}_{p} \le (2\pi)^{-n}\norm{\hat{\Phi_{m}}}_{1}
\Sum_{k=0}^{\infty}\Sum_{j=0}^{\max(k,m)+3}\norm{g_{j}}_{\infty}
\norm{\hat{f_{k}}}_{p}
\end{eqnarray}

We may now 
estimate the convolution
\(||\hat{\Phi_{j}}*g||_{L^{\infty}}\) using  (\ref{eq:14})
of Proposition \ref{sec:estim-source} with \(\chi=\Phi\)
and \(\epsilon=2^{-j}\) to establish that
\begin{eqnarray}
  \nn
||g_{j}||_{\infty}\le \frac{C}{|\rho|}2^{j} 
\end{eqnarray}
for \(|\rho|\) sufficiently large, so that
\begin{eqnarray}\nn
\norm{\hat{\psi_{m}}}_{p} \le \norm{\hat{\Phi_{m}}}_{1}
\Sum_{k=0}^{\infty}\Sum_{j=0}^{\max(k,m)+3}\frac{C}{|\rho|}2^{j}
\norm{\hat{f_{k}}}_{p}
\end{eqnarray}

Because \(\hat{\Phi_{m}}(\xi) = 2^{nm}\hat{\Phi}(2^{m}\xi)\),
 its  \(L^{1}\) norm is the same as the  \(L^{1}\) norm
of  \(\hat{\Phi}\), which doesn't depend on  \(m\), so
\begin{eqnarray}\nn
\norm{\hat{\psi_{m}}}_{p} &\le& \frac{C}{|\rho|}
\Sum_{k=0}^{\infty}\left(\Sum_{j=0}^{\max(k,m)+3}2^{j}\right)
\norm{\hat{f_{k}}}_{p}
\\\nn
&\le&
\frac{C}{|\rho|}\Sum_{k=0}^{\infty}2^{\max(k,m)+4}\norm{\hat{f_{k}}}_{p}
\\\na{which we rewrite as}\label{eq:18}
\Sup_{m}2^{-m}\norm{\hat{\psi_{m}}}_{p} &\le&\frac{C}{|\rho|}\Sum_{k=0}^{\infty}2^{k}\norm{\hat{f_{k}}}_{p}
\end{eqnarray}
with a new constant  \(C\) that is  \(2^{4}\) times the
old one.\\

Our goal is to estimate the  \(L^{q}(D)\) norm of
\(\psi\) on a compact set \(D\) for \(q>2\), and this is
bounded by the left hand side of (\ref{eq:18}) if we
choose  \(p<2\) to be the dual exponent. In our
application,  \(f\) will be the right hand side of
(\ref{eq:23}) which will have its support in  \(D\), so
the sum on the right hand side of (\ref{eq:18}) will also
be a finite sum,
bounded by a constant times  \(\norm{\hat{f}}_{p}\).  We
we will have the desired bound for \(\psi\) as long as we
can guarantee that  the Fourier transform of \(f\) is
in  \(L^{p}\) for all \(p\le 2\).\\

In the special case that  \(p=2\), the Plancherel
inequality tells us that (\ref{eq:18}) is equivalent to
\begin{equation}
  \nn
  \Sup_{m}\ 2^{-m}\norm{\psi_{m}}_{2} \le\frac{C}{|\rho|}\Sum_{k=0}^{\infty}2^{k}\norm{f_{k}}_{2}
\end{equation}
This kind of estimate was used in \cite{AgmonHormander} to
study constant coefficient PDE's with simple
characteristics, including, as the principal example, the
free Helmholtz equation. The norms defined there were:
\begin{eqnarray}\nn
\norm{f}_{B_{k/2}^*} &:=& \sup_{0 \leq j < \infty}
\frac{1}{2^{jk/2}} \norm{f_{j}}_{2} = \lVert\hat{f}\rVert_{B^{-k/2}_{2,\infty}}
\\\nn
\norm{f}_{B_{k/2}} &:=& \sum_{j=0}^\infty 2^{jk/2} \norm{f_{j}}_{2} = \lVert\hat{f}\rVert_{B^{k/2}_{2,1}}
\end{eqnarray}
The authors showed, in particular, that the incident waves
for the Helmholtz equation 
in $B_{1/2}^*$ were exactly the Herglotz wave
functions.

Our estimate in (\ref{eq:18}) may be written
as

\begin{equation}
  \label{eq:19}
  \norm{\psi}_{\hat{B^{-1}_{p,\infty}}}
\le\frac{C}{|\rho|}\norm{f}_{\hat{B^1_{p,1}}}
\end{equation}
with
\begin{eqnarray}\nn
\norm{\psi}_{\hat{B^{-1}_{p,\infty}}} :=
\sup_{0 \leq j < \infty} \frac{1}{2^{j}}
\norm{\hat{\psi_j}}_p
\\\nn
\norm{f}_{\hat{B^1_{p,1}}} := \sum_{j=0}^\infty 2^{j} \norm{\hat{f_j}}_p
\end{eqnarray}

In this notation we have proved\\ 
\begin{proposition}
  For every  \(f\in \hat{B^1_{p,1}}\), there exists a
  \(\psi\in \hat{B^{-1}_{p,\infty}}\) satisfying
  (\ref{eq:8}) and the
    estimate (\ref{eq:19}).
\end{proposition}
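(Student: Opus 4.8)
The plan is to take $\psi := G\ast f$, where $G$ is the inverse Fourier transform of $(2\pi)^{-n/2}g$ with $g = 1/P_{\rho}$ --- equivalently, to define $\psi$ by $\hat\psi := g\,\hat f$ --- and then to obtain \eqref{eq:19} from the dyadic estimate already derived above. First I would check that this $\psi$ makes sense and solves \eqref{eq:8}. Since $\rho\cdot\rho = 0$, the characteristic variety $\M = P_{\rho}^{-1}(0)$ has codimension two, so $1/P_{\rho}\in L^{1}_{loc}(\R^{n})$, and because $\abs{P_{\rho}(\xi)}\to\infty$ as $\abs{\xi}\to\infty$ it is bounded away from $\M$; hence $g$ is a tempered distribution. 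Writing $f = \sum_{k}f_{k}$ with $f_{k}=\Phi_{k}f$ compactly supported, one has $\hat{f_{k}}\in L^{p}$ and $\norm{\hat f}_{p}\le\sum_{k}\norm{\hat{f_{k}}}_{p}\le\norm{f}_{\hat{B^{1}_{p,1}}}$; a routine check then shows $g\hat f = \sum_{k}g\,\hat{f_{k}}$ is a well-defined tempered distribution. Finally, since $P_{\rho}\cdot g = 1$ off the Lebesgue-null set $\M$, we get $\widehat{P_{\rho}(D)\psi} = P_{\rho}\,g\,\hat f = \hat f$, i.e.\ $\psi$ solves \eqref{eq:8}.

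For the estimate, I would reproduce the computation leading to \eqref{eq:18}. With $\psi_{m}=\Phi_{m}\psi$ and $g_{j}=\hat{\Phi_{j}}\ast g$, the support-of-convolution bound recalled above forces $\Phi_{m}(G_{j}\ast f_{k})=0$ once $j>3+\max(k,m)$, so $\hat{\psi_{m}} = (2\pi)^{-n}\hat{\Phi_{m}}\ast\sum_{k}\sum_{j=0}^{\max(k,m)+3}g_{j}\hat{f_{k}}$; Young's inequality, the uniform bound $\norm{\hat{\Phi_{m}}}_{1}=\norm{\hat{\Phi}}_{1}$, and the mollified-symbol estimate $\norm{g_{j}}_{\infty}\le C\,2^{j}/\abs{\rho}$ of Proposition~\ref{sec:estim-source} then give
\[
\norm{\hat{\psi_{m}}}_{p}\le\frac{C}{\abs{\rho}}\sum_{k=0}^{\infty}\Bigl(\sum_{j=0}^{\max(k,m)+3}2^{j}\Bigr)\norm{\hat{f_{k}}}_{p}\le\frac{C}{\abs{\rho}}\sum_{k=0}^{\infty}2^{\max(k,m)+4}\norm{\hat{f_{k}}}_{p},
\]
and dividing by $2^{m}$ and using $\max(k,m)-m\le k$ yields $\sup_{m}2^{-m}\norm{\hat{\psi_{m}}}_{p}\le(C/\abs{\rho})\sum_{k}2^{k}\norm{\hat{f_{k}}}_{p}$, which is \eqref{eq:19}; in particular $\psi\in\hat{B^{-1}_{p,\infty}}$, and the finiteness of the right-hand side also justifies the convergence of the $k$-series.

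The substance of the argument is already contained in Theorem~\ref{RuizEst} and Proposition~\ref{sec:estim-source}; what is left is bookkeeping. The only point needing genuine care, and the one I expect to be the main obstacle, is the distributional hygiene: verifying that $g\hat f$ is a well-defined tempered distribution in spite of the codimension-two singularity of $1/P_{\rho}$ (this is exactly where $\rho\cdot\rho = 0$ enters, forcing $\operatorname{codim}\M = 2$ and hence $1/P_{\rho}\in L^{1}_{loc}$), and that the formal steps --- the double sum over $j,k$, the termwise Fourier transform, and the use of Young's inequality through $\hat{\Phi_{m}}\ast(\cdot)$ --- are legitimate, which relies on the finiteness of the effective $j$-range. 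If one prefers to sidestep these points, an alternative is to solve $P_{\rho}(D)\psi^{(\ell)} = f^{(\ell)}$ for approximants $f^{(\ell)}$ with $\hat{f^{(\ell)}}\in C^{\infty}_{0}$ (a trivial division by the symbol), derive \eqref{eq:19} uniformly in $\ell$, and pass to the limit.
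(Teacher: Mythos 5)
Your proposal is correct and follows essentially the same route as the paper: the paper likewise sets $\hat\psi = g\hat f$ (equivalently $\psi = G\ast f$), truncates the $j$-sum via the support-of-convolution observation, and combines Young's inequality with the bound $\norm{g_j}_\infty \le C2^j/\abs{\rho}$ from Proposition~\ref{sec:estim-source} to reach \eqref{eq:18}, which is \eqref{eq:19} in Besov notation. The only difference is that you spend more effort on the distributional hygiene of $g\hat f$ than the paper does; note that the cleanest justification is the one already implicit in your bookkeeping, namely interpreting the product through the finite sums $\sum_j g_j\hat{f_k}$ of bounded functions times $L^p$ functions, rather than through local integrability of $1/P_\rho$ alone.
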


We now return to (\ref{eq:23}). We will show, in Lemma
\ref{sec:lemma-Q} below, that  

\begin{equation}
  \label{eq:22}
  Q\in \hat{B^1_{p,1}}
  \qquad\text{and}\qquad||Qg||_{\hat{B^1_{p,1}}}
\le C_{Q} ||g||_{\hat{B^{-1}_{p,\infty}}}
\end{equation}
where  \(C_{Q}\) denotes a constant depending on  \(Q\).  Combining (\ref{eq:22}) with (\ref{eq:19})
shows that
\begin{eqnarray}
  \nn
  \norm{\psi^{N}}_{\hat{B^{-1}_{p,\infty}}}
&\le&\frac{C_{Q}}{|\rho|}\norm{\psi^{N-1}}_{\hat{B^{-1}_{p,\infty}}}
\le\left(\frac{C_{Q}}{|\rho|}\right)^{N}\norm{Q}_{\hat{B^{1}_{p,1}}}
\end{eqnarray}
and hence that the series  (\ref{eq:28}) converges when
\(|\rho|>C_{Q}\) and thererefore that, for
\(2<q=\frac{p}{p-1}\) and  \(D\) contained in a ball of
radius  \(R\),    the sum  \(\psi\)
satisfies

\begin{eqnarray}
  \nn
  \norm{\psi}_{L^{q}(D)}\le R
  \norm{\psi}_{\hat{B^{-1}_{p,\infty}}}
\le \frac{C}{|\rho|}\norm{Q}_{\hat{B^{1}_{p,1}}}
\end{eqnarray}
and establishes \eqref{cgolpest} for  all \(q>2\) (and, because
\(D\) is bounded, for \(q<2 \) as well) and therefore
proves Theorem~\ref{cgolp}.\\

It remains to prove (\ref{eq:22}). The function  \(Q\)
satisfies
\begin{eqnarray}
  \nn
  Q = \prod\limits_{i=1}^{n}(H^{+}(x_{i}) - H^+(x_i-1))q(x)
\end{eqnarray}
where  \(q\) is smooth and supported in a ball of radius  \(R\), and
\(H^{+}(t)\) is the Heavyside function, the indicator
function of the positive half line.

\begin{eqnarray}
  \nn
  \norm{q}_{\hat{B^1_{p,1}}}  =
  \Sum_{j=0}^{\log_{2}R}2^{j}||\hat{\Phi_{j}}*\hat{q}||_{p}
  \le 2R\sup\limits_{j}\norm{\hat{\Phi_{j}}}_{1}||\hat{q}||_{p}
  = 2R\norm{\hat{\Phi}}_{1}||\hat{q}||_{p}
\end{eqnarray}
so  \(q\in \mathscr{F}B^1_{p,1}\). The lemma below tells us that
multiplication by the Heavyside function preserves
\(\mathscr{F}B^1_{p,1}\) and that multiplication by smooth
compactly supported  \(q\) maps
\(\mathscr{F}B^{-1}_{p,\infty}\) to  \(\mathscr{F}B^1_{p,1}\). This
is enough to establish (\ref{eq:22}) and finish this
section.\\

\begin{lemma}\label{sec:lemma-Q} Suppose that  \(q\) is smooth and supported
  in the ball of radius  \(R\), and  \(\Theta\) a unit
  vector in  \(\R^{n}\). Then   
  \begin{eqnarray}
    \label{eq:24}
    &\norm{qg}_{\hat{B^1_{p,1}}} \le
    2R^{2}\norm{\hat{q}}_{1}||g||_{\hat{B^{-1}_{p,\infty}}}
    \\\label{eq:25}
    &\norm{\big(H_{+}(x\cdot\Theta) - H^+(x\cdot\Theta - 1)\big)g(x)}_{\hat{B^1_{p,1}}} \le C_{p}\norm{g}_{\hat{B^1_{p,1}}}
  \end{eqnarray}
  for \(1<p<\infty\).
\end{lemma}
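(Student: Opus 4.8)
The plan is to exploit the fact that the dyadic pieces $\Phi_j$ act by pointwise multiplication in $x$, so that multiplication by any fixed function $h$ commutes with the localisation: $(hg)_j=\Phi_j(hg)=h\,(\Phi_j g)=h\,g_j$. Consequently neither bound needs any almost-orthogonality; it suffices to control $\norm{\widehat{h g_j}}_p$ in terms of $\norm{\widehat{g_j}}_p$ — for $h=q$ only for the finitely many $j$ with $\Phi_j q\not\equiv 0$ — and then to reinstate the weights $2^j$ in the sums defining the two norms.

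For \eqref{eq:24}: since $q$ is supported in the ball of radius $R$ while $\supp\Phi_j\subset\{2^{j-1}\le\abs{x}\le 2^{j+1}\}$ for $j\ge 1$, the term $(qg)_j=q\,g_j$ vanishes unless $2^{j-1}\le R$, i.e.\ $j\le\log_2 R+1$. For those $j$ I will write $\widehat{q g_j}=c\,\widehat q\ast\widehat{g_j}$, note $\widehat q\in\mathscr{S}\subset L^1$, and combine Young's inequality with the definition of $\norm{g}_{\hat{B^{-1}_{p,\infty}}}$ to get
\[
\norm{\widehat{q g_j}}_p\le\norm{\widehat q}_1\,\norm{\widehat{g_j}}_p\le 2^{j}\,\norm{\widehat q}_1\,\norm{g}_{\hat{B^{-1}_{p,\infty}}}.
\]
Summing the $2^j$-weighted form over $j\le\log_2 R+1$ produces a geometric sum $\sum 4^{j}=O(R^{2})$ and yields \eqref{eq:24}. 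The compact support of $q$ is used only to truncate this sum, and it is precisely that truncation which converts the factor $2^{-j}$ built into $\hat{B^{-1}_{p,\infty}}$ into the factor $2^{j}$ built into $\hat{B^1_{p,1}}$; the per-$j$ estimate itself uses nothing more than $\widehat q\in L^1$, which is why only $\norm{\widehat q}_1$ appears and why, in fact, this part needs no restriction on $p$.

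For \eqref{eq:25}: each $\Phi_j$ is radial, so rotating coordinates so that $\Theta=e_1$ changes neither side, and then $m(x):=H_+(x\cdot\Theta)-H_+(x\cdot\Theta-1)=\chi_{(0,1)}(x_1)$. Again $(mg)_j=m\,g_j$, so \eqref{eq:25} will follow from the claim that convolution against $\widehat m$ is bounded on $L^p(\R^n)$ for $1<p<\infty$ — equivalently, that $m$ is an $L^p$ Fourier multiplier — since this gives $\norm{\widehat{m g_j}}_p\le C_p\norm{\widehat{g_j}}_p$, after which one sums the $2^j$-weighted inequality exactly as above. To prove the claim I will use that $\widehat m(\xi)=\widehat{\chi_{(0,1)}}(\xi_1)\otimes(2\pi)^{n-1}\delta(\xi_2,\dots,\xi_n)$, so convolution against $\widehat m$ is a convolution in the single variable $\xi_1$; a Minkowski/Fubini argument then shows its $L^p(\R^n)$ operator norm equals that of the corresponding one-dimensional convolution, whose multiplier is (a reflection of) the interval indicator $\chi_{(0,1)}$. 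That interval-indicator multiplier is bounded on $L^p(\R)$ for precisely $1<p<\infty$: it is a difference of two half-line multipliers $\chi_{(-\infty,c)}$, each differing from $\tfrac{1}{2}I$ by a modulated Hilbert transform.

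I expect the only nonroutine point — hence the main obstacle — to be this last reduction. Because $m$ is constant in $n-1$ of the coordinates, $\widehat m$ is the genuinely distributional object $\widehat{\chi_{(0,1)}}(\xi_1)\otimes\delta(\xi')$, and one must justify that convolution against it is a well-defined, bounded operator on $L^p(\R^n)$; the clean route is exactly the reduction to a one-variable convolution followed by Fubini. This is also the step that forces the hypothesis $1<p<\infty$ and that explains why a single Heaviside — rather than a smoother cutoff — is already the hard case. The remaining ingredients, namely the commutation $(hg)_j=h\,g_j$, Young's inequality, and the geometric summation, are routine.
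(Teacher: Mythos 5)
Your proof is correct and follows essentially the same route as the paper's: commute the dyadic localization past the multiplier, use Young's inequality with $\hat{q}\in L^1$ together with the finiteness of the sum forced by the compact support of $q$ for \eqref{eq:24}, and reduce \eqref{eq:25} to the $L^p$-boundedness of the (modulated) Hilbert transform in the direction $\Theta$. The only cosmetic difference is that the paper estimates the two Heaviside terms separately (invoking translation invariance for the shifted one) rather than treating their difference as a single interval-indicator multiplier.
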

\begin{proof}
  \begin{eqnarray}
    \nn
    \norm{qg}_{\hat{B^1_{p,1}}} &=&
    \Sum_{j=0}^{\infty}2^{j}\norm{\hat{(qg\Phi_{j})}}_{p}
    \\\na{Because  \(q\) has compact support, the sum is
      finite, i.e. \espace}\nn
    &=&\Sum_{j=0}^{\log_{2}R}2^{j}\norm{\hat{q}*\hat{g_{j}}}_{p}
    \\\na{where  \(g_{j}\) denotes  \(g*\Phi_{j}\)}\nn
    &\le&
    \Sum_{j=0}^{\log_{2}R}2^{2j}\norm{\hat{q}}_{1}
    \left(2^{-j}\norm{\hat{g_{j}}}_{p}\right)
    \\\nn
    &\le&
    \left(\Sum_{j=0}^{\log_{2}R}2^{2j}\right)\norm{\hat{q}}_{1}\norm{g}_{\hat{B^{-1}_{p,\infty}}}
    \\\nn
    &\le&2R^{2}\norm{\hat{q}}_{1}\norm{g}_{\hat{B^{-1}_{p,\infty}}}
  \end{eqnarray}
which establishes (\ref{eq:24}). To prove (\ref{eq:25})
\begin{eqnarray}
  \nn
  \norm{H^{+}(x\cdot\Theta)g}_{\hat{B^1_{p,1}}}
  &=&\Sum_{j=0}^{\infty}2^{j}\norm{\hat{H^{+}g_{j}}}_{p}
\\\nn
&=&\Sum_{j=0}^{\infty}2^{j}\norm{\hat{H^{+}}*\hat{g_{j}}}_{p}
\\
\na{but convoution with  \(\hat{H^{+}}(\xi\cdot\Theta)\)
  is just a one dimensional Hilbert transform in the
  direction  \(\Theta\), which is bounded from  \(L^{p}\)
  to  \(L^{p}\) for all  \(1<p<\infty\), so that\espace}\nn
&\le&\Sum_{j=0}^{\infty}2^{j}C_{p}\norm{\hat{g_{j}}}_{p}
\\\nn
&\le&C_{p}\norm{g}_{\hat{B^1_{p,1}}}.
\end{eqnarray}
The same estimate holds for $H^+(x\cdot\Theta - 1)$
because rigid motions induce bounded maps from
\(\mathscr{F}B^1_{p,1}\) to itself.
\end{proof}

\section{Function spaces}
\label{sectionFunctionSpaces}
We have proved estimates using  \(\mathscr{F}B^1_{p,1}\) and
\(\mathscr{F}B^{-1}_{p,\infty}\) norms, but have, so far said
very little about the function spaces, other than pointing
out that smooth compactly supported functions times
Heavyside functions belong to 
\(\mathscr{F}B^1_{p,1}\), and that, for  \(p<2\) and  \(q\) the
dual exponent, any
\(u\in \mathscr{F}B^{-1}_{p,\infty}\)  is in  \(L^{q}\) of every
compact set.\\

These spaces are Fourier transforms of 
Besov-spaces, which  are defined in
\cite{BL,Stein,Triebel1} using the partition of unity  in (\ref{eq:32}).
For $s\in \R$, $0<p\leq \infty$ and $0<q\leq \infty$,
\begin{equation}\nn
B^s_{p,q} = B^s_{p,q}(\R^n) = \{ f \in \mathscr{S}'(\R^n) \mid \norm{f}_{B^s_{p,q}} < \infty \}
\end{equation}
where
\begin{equation}\nn
\norm{f}_{B^s_{p,q}} := \left( \sum_{j=0}^\infty \left( R_j^s \norm{\mathscr{F}^{-1}\!(\Phi_j \hat{f}\, )}_{L^p} \right)^q \right)^{1/q}
\end{equation}
with  \(R_{j}=2^{j}\), and  with the usual modification for $q=\infty$.

\begin{definition}
  We say that $f \in \BSF$ if $\hat{f} \in B^s_{p,q}$ and
  write $\norm{f}_{\BSF} = \lVert \hat{f}
  \rVert_{B^s_{p,q}}$. Note that
\begin{equation}\nn
\norm{f}_\BSF = \left( \sum_{j=0}^\infty \left( R_j^s \norm{\hat{f_j}}_{L^p} \right)^q\right)^{1/q} =  \left( \sum_{j=0}^\infty \left( R_j^s \norm{\hat{\Phi_j} \ast \hat{f}}_{L^p} \right)^q\right)^{1/q} .
\end{equation}
\end{definition}

\smallskip

The fact that \(\BSF\) is a Banach space follows from the
that corresponding fact for \( B^s_{p,q}\)
\cite[2.3.3]{Triebel1}. We simply note that the Fourier
transform, acting on tempered distributions, is one to one, and
that convergence in the \( B^s_{p,q}\) norm implies convergence
as tempered distributions.

\section{Proof of Theorem \ref{LTharmonic}}
\label{sectionPolynomial}
In this section, we will use what are sometimes
called array, or componentwise operations, as well as
standard multi-index notation. If $\eta$ is a vector in
$\C^n$, and $\alpha$ is a multi-index (i.e. also a
vector), we will use \(\eta^\alpha\) to
mean the product
\begin{equation}\nn
\eta^\alpha = \eta_1^{\alpha_1} \cdots \eta_n^{\alpha_n}
\end{equation}
When  \(a\) is a scalar,  \(\eta^{a}\) will denote the vector
\begin{equation}\nn
\eta^{a} = (\eta_1^{a}, \ldots, \eta_n^{a})
\end{equation}
Similarly, we will use a scalar divided by a vector, or a vector
divided by a vector, to denote
componentwise division, e.g.
\begin{eqnarray}\nn
\frac{1}{\eta} = \left( \frac{1}{\eta_1}, \ldots,
  \frac{1}{\eta_n} \right),
\end{eqnarray}
We let $\sigma_k(\eta)$ denotes the $k$'th elementary
symmetric function of $(\eta_1, \ldots, \eta_n)$. The two
symmetric functions
we will make use of are
\begin{gather}\nn
\sigma_n(\eta) = \prod_{i=1}^n \eta_i,\\\nn
\sigma_{n-1}(\eta) = \sum_{i=1}^{n} \prod_{j\neq i} \eta_j.
\end{gather}
In this section, we will use the superscript
$\hat{\phantom{i}}$ to indicate that an index does not occur, so that
\begin{equation}\nn
\eta_{\hat{i}} = (\eta_1, \ldots, \eta_{i-1}, \eta_{i+1}, \ldots, \eta_n)
\end{equation}
means the $n-1$-dimensional vector that omits the $i$'th
component of $\eta$. We will use the notation
$P_{\hat{i}}$ and $P(\eta_{\hat{i}})$ interchangeably to
denote a polynomial which does not depend on the $i$'th
variable.\\

We will also use the superscript $\hat{\phantom{i}}$ to
denote the Laplace transform, \(\hat{P}\), of a degree $N$
homogeneous polynomial $P(x)$, given by
\begin{equation}\label{eq:26}
\widehat{P}(\rho) = \int_{x>0} e^{-\rho\cdot x} P(x) dx
\end{equation}
where $x>0$ means that each component $x_i > 0$. Making
the substitutions  $y_i = \rho_i x_i$, with $\rho$ real and $\rho > 0$ (for the moment) we have
\begin{equation}\nn
\widehat{P}(\rho) = \int_{y>0} e^{-1\cdot y}P\left(\frac{y}{\rho}\right) \sigma_n\left(\frac{1}{\rho}\right) dy.
\end{equation}
If $P = \sum_{\abs{\alpha}=N} p_\alpha x^\alpha$, then
\begin{equation}\nn
\widehat{P}(\rho) = \sum_{\abs{\alpha}=N} p_\alpha \frac{1}{\rho^{\alpha+1}} \int_{y>0} e^{-1\cdot y}y^\alpha dy = \sum_{\abs{\alpha}=N} p_\alpha \left( \frac{1}{\rho} \right)^{\alpha+1} \alpha!
\end{equation}
where  \(\alpha+1\) is the multi-index with components
\(\alpha_{i}+1\). Thus  
\begin{equation}\nn
\widehat{P}(\rho) = Q^{N+n}\left(\frac{1}{\rho}\right)
\end{equation}
where $Q = Q^{N+n}$ is the  homogeneous polynomial of
degree $N+n$ with coefficients  \(q_{\alpha+1}= \alpha!p_{\alpha}\) . 
The main assertion of Theorem \ref{LTharmonic} is that
\(\hat{P}(\rho)\) does not vanish on any open subset of the
variety  \(\rho\cdot\rho = 0\). This is equivalent to the
assertion that  the polynomial \(Q(\eta)\) does not vanish
identically on any open subset of 
\begin{eqnarray}
  \nn
 \left\{\frac{1}{\eta} \cdot \frac{1}{\eta} = 0\right\}
\end{eqnarray}
where
\begin{equation}
\label{rhoDrhoToEta}
 \rho\cdot\rho = \frac{1}{\eta} \cdot \frac{1}{\eta} = \frac{\sigma_{n-1}(\eta^2)}{\sigma_n(\eta^2)} = \frac{\sigma_{n-1}(\eta^2)}{\sigma_n^2(\eta)}.
\end{equation}

If $P$ is harmonic, $Q(\eta) = \widehat{P}(\frac{1}{\eta})$ has an additional representation.

\begin{lemma}\label{sec:lemma-ibp-form}
If $P$ is harmonic and homogeneous, then
\begin{equation}
\label{ihatrep}
Q(\eta) = \widehat{P}(\frac{1}{\eta}) = \frac{\sigma_n(\eta)}{\sigma_{n-1}(\eta^2)} \sum_{i=1}^n \left( P_{\hat{i}} + \eta_i Q_{\hat{i}} \right)
\end{equation}
where $P_{\hat{i}}$ and $Q_{\hat{i}}$ are homogeneous polynomials of degree $N+2n-2$, $N+2n-3$, respectively, which do not depend on $\eta_i$.
\end{lemma}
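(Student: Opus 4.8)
The plan is to read off the representation \eqref{ihatrep} by integrating by parts twice in the defining integral \eqref{eq:26} and using harmonicity to annihilate the interior term, so that $\widehat{P}(\rho)$ is expressed purely through boundary contributions coming from the faces of the positive orthant. First I would restrict attention to $\rho\in\C^n$ with $\Re\rho_i>0$ for every $i$; there the integral defining $\widehat{P}$, and all the integrals appearing below, converge absolutely, and since both sides of \eqref{ihatrep} are rational functions of $\rho$ (polynomials in the reciprocals $1/\rho$), proving the identity on this open set suffices. On the positive orthant $\Omega=\{x>0\}$ I would apply Green's second identity with $u=P(x)$ and $v=e^{-\rho\cdot x}$: since $\Delta_x e^{-\rho\cdot x}=(\rho\cdot\rho)e^{-\rho\cdot x}$ and $\Delta P=0$, the bulk term collapses to $(\rho\cdot\rho)\,\widehat{P}(\rho)$ and only an integral over $\partial\Omega$ survives.

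The boundary $\partial\Omega$ is the union of the coordinate half-hyperplanes $F_i=\{x_i=0,\ x_j>0\text{ for }j\neq i\}$, whose pairwise intersections (the lower-dimensional edges $\{x_i=x_k=0\}$) have measure zero and contribute nothing. On $F_i$ the outward normal points in the direction of decreasing $x_i$, so $\partial_\nu e^{-\rho\cdot x}=\rho_i e^{-\rho\cdot x}$ and $\partial_\nu P=-\partial_{x_i}P$; restricting $e^{-\rho\cdot x}$ to $F_i$ gives $e^{-\rho_{\hat i}\cdot x_{\hat i}}$, while restricting $P$ and $\partial_{x_i}P$ to $\{x_i=0\}$ gives homogeneous polynomials in $x_{\hat i}$ of degrees $N$ and $N-1$. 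Collecting the $n$ faces yields
\[
(\rho\cdot\rho)\,\widehat{P}(\rho)=\sum_{i=1}^n\Bigl(\rho_i\,\widehat{P|_{x_i=0}}(\rho_{\hat i})+\widehat{(\partial_{x_i}P)|_{x_i=0}}(\rho_{\hat i})\Bigr),
\]
where the hats on the right now denote $(n-1)$-dimensional Laplace transforms.

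Next I would invoke the computation already performed above (the one giving $\widehat{P}(\rho)=Q^{N+n}(1/\rho)$), but in dimension $n-1$: the $(n-1)$-dimensional Laplace transform of a homogeneous polynomial of degree $d$ is a homogeneous polynomial of degree $d+(n-1)$ in the reciprocals of the remaining transform variables. Thus $\widehat{P|_{x_i=0}}(\rho_{\hat i})$ is homogeneous of degree $N+n-1$ in $1/\rho_{\hat i}$ and $\widehat{(\partial_{x_i}P)|_{x_i=0}}(\rho_{\hat i})$ is homogeneous of degree $N+n-2$ in $1/\rho_{\hat i}$. Now substitute $\rho=1/\eta$, so $1/\rho_{\hat i}=\eta_{\hat i}$, $\rho_i=1/\eta_i$, and $\rho\cdot\rho=\sigma_{n-1}(\eta^2)/\sigma_n^2(\eta)$ by \eqref{rhoDrhoToEta}; multiplying through by $\sigma_n(\eta)=\prod_j\eta_j$ absorbs the stray factor $1/\eta_i$, and setting
\[
P_{\hat i}(\eta_{\hat i}):=\Bigl(\prod_{j\neq i}\eta_j\Bigr)\widehat{P|_{x_i=0}}(1/\eta_{\hat i}),\qquad Q_{\hat i}(\eta_{\hat i}):=\Bigl(\prod_{j\neq i}\eta_j\Bigr)\widehat{(\partial_{x_i}P)|_{x_i=0}}(1/\eta_{\hat i})
\]
gives homogeneous polynomials of degrees $N+2n-2$ and $N+2n-3$ that visibly do not involve $\eta_i$. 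Rearranging the resulting identity into $\widehat{P}(1/\eta)=\frac{\sigma_n(\eta)}{\sigma_{n-1}(\eta^2)}\sum_i\bigl(P_{\hat i}+\eta_i Q_{\hat i}\bigr)$ is exactly \eqref{ihatrep}.

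The only genuinely delicate point is the legitimacy of the double integration by parts: one must check that the face integrals converge (they do, again because $\Re\rho_{\hat i}>0$) and that the edges are negligible, after which the computation is purely mechanical. Everything else is bookkeeping — tracking homogeneity degrees through the substitution $\rho\mapsto1/\eta$ and the multiplication by $\sigma_n(\eta)$; the independence of $P_{\hat i},Q_{\hat i}$ on $\eta_i$ is automatic, since $\widehat{P|_{x_i=0}}$ is a function of $\rho_{\hat i}$ alone and $\prod_{j\neq i}\eta_j$ omits $\eta_i$.
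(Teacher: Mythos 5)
Your proposal is correct and follows essentially the same route as the paper: integrate by parts (Green's identity) on the positive orthant using $\Delta e^{-\rho\cdot x}=(\rho\cdot\rho)e^{-\rho\cdot x}$ and $\Delta P=0$, recognize the face contributions as $(n-1)$-dimensional Laplace transforms of $P|_{x_i=0}$ and $\partial_{x_i}P|_{x_i=0}$, then substitute $\eta=1/\rho$ and clear denominators with $\sigma_n(\eta)$, extending from the open set $\Re\rho>0$, $\rho\cdot\rho\neq 0$ by rationality. The degree bookkeeping and the definitions of $P_{\hat i}$, $Q_{\hat i}$ match the paper's.
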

\begin{proof}
  We will prove \eqref{ihatrep} on the open set where
  $\Re \rho > 0$ and that  $\rho
  \cdot \rho \neq 0$. Because  \(Q\) is a polynomial in  \(\eta\) ,
  the right hand side of \eqref{ihatrep} is also a
  polynomial, and the identity must hold everywhere.\\

  We start with (\ref{eq:26}),
  integrate by parts, and recall that $P$ is harmonic,
\begin{eqnarray}\nn
\widehat{P}(\rho) &=& \int_{x>0} \frac{\Delta e^{-\rho\cdot
    x}}{\rho \cdot \rho} P(x) dx
\\\nn
&=& \frac{1}{\rho \cdot \rho} \sum_{i=1}^n \left(
  \Int_{\substack{x_i=0\\x_{\hat{i}}>0}} e^{-\rho\cdot x}
  \left( \rho_i P + \frac{\d}{\d x_i} P \right) +
  \Int_{x>0} e^{-\rho\cdot x} \Delta P \right)
\\\nn
&=& \frac{1}{\rho \cdot \rho} \sum_{i=1}^n \left( \rho_i
  \int_{x_{\hat{i}}>0} e^{-\rho_{\hat{i}} \cdot
    x_{\hat{i}}} P\Big|_{ x_i=0} + \Int_{x_{\hat{i}}>0}
  e^{-\rho_{\hat{i}} \cdot x_{\hat{i}}} \frac{\d}{\d x_i}
  P\Big|_{ x_i=0} \right)
\\\nn
&=& \frac{1}{\rho \cdot \rho} \sum_{i=1}^n \left( \rho_i
  \tilde{P_i}\left(\frac{1}{\rho_{\hat{i}}}\right)
 + \tilde{Q_i}\left(\frac{1}{\rho_{\hat{i}}}\right) \right)
\end{eqnarray}
where  \( \widetilde{P_i}\) and  \( \widetilde{Q_i}\)
simply denote polynomials in  \(n-1\) variables. 

Recalling \eqref{rhoDrhoToEta}, the polynomial  \(Q\) then satisfies  
\begin{eqnarray}\nn
  Q(\eta) &=&
  \frac{(\sigma_n(\eta))^2}{\sigma_{n-1}(\eta^2)} 
  \sum_{i=1}^n \left( \frac{1}{\eta_i}
    \widetilde{P_i}(\eta_{\hat{i}}) 
         +  \widetilde{Q_i}(\eta_{\hat{i}}) \right) \\\nn
  &=& \frac{\sigma_n(\eta)}{\sigma_{n-1}(\eta^2)} 
  \sum_{i=1}^n \left( \frac{\sigma_n(\eta)}{\eta_i}
    \widetilde{P_i}(\eta_{\hat{i}})
    + \sigma_n(\eta) \widetilde{Q_i}(\eta_{\hat{i}}) \right) \\\nn
  &=& \frac{\sigma_n(\eta)}{\sigma_{n-1}(\eta^2)}
  \sum_{i=1}^n \left( P_i(\eta_{\hat{i}}) +
    \eta_i Q_i(\eta_{\hat{i}}) \right),
\end{eqnarray}
so that  \( P_i(\eta_{\hat{i}}) =
\sigma_{n-1}(\eta_{\hat{i}})
\widetilde{P_i}(\eta_{\hat{i}})\) and
\(Q_i(\eta_{\hat{i}})=\sigma_{n-1}(\eta_{\hat{i}})\widetilde{Q_i}(\eta_{\hat{i}})\)
are the polynomials \(P_{\hat{i}}\) and \(Q_{\hat{i}}\) of
(\ref{ihatrep}), and the proof is finished.
\end{proof}

The irreducibility of the denominator,  \(\sigma_{n-1}(\eta^2)\)  in
(\ref{ihatrep}) will play a role in several parts of our
proof, so we prove this fact here:

\begin{lemma}
\label{Rirreducible}
If $n \geq 3$,
$\sigma_{n-1}(\eta^2)$ is an irreducible polynomial. If
$n=2$, then $\sigma_{n-1}(\eta^2) = \eta_1^2 + \eta_2^2 =
(\eta_1 - i \eta_2)(\eta_1 + i \eta_2)$.
\end{lemma}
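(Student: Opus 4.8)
The plan is to induct on the number of variables $n$, peeling off the last one and invoking Gauss's lemma. Abbreviate $R_n(\eta):=\sigma_{n-1}(\eta^2)=\sum_{i=1}^n\prod_{j\neq i}\eta_j^2$. Separating the monomials of $\sigma_{n-1}(\eta_1^2,\dots,\eta_n^2)$ according to whether they contain $\eta_n$ yields the recursion
\[
R_n \;=\; R_{n-1}\,\eta_n^2 \;+\; \prod_{j=1}^{n-1}\eta_j^2 ,
\]
because the monomials of $\sigma_{n-1}$ that contain $\eta_n^2$ have cofactor $\sigma_{n-2}(\eta_1^2,\dots,\eta_{n-1}^2)=R_{n-1}$, while the unique monomial omitting $\eta_n$ is $\prod_{j<n}\eta_j^2$. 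For $n=2$ we have $R_2=\eta_1^2+\eta_2^2=(\eta_1-i\eta_2)(\eta_1+i\eta_2)$, a product of two non-associate irreducibles of the UFD $\C[\eta_1,\eta_2]$; in particular $R_2$ is square-free. This settles the $n=2$ assertion of the lemma and supplies the base of the induction.

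For $n\ge 3$ I will show $R_n$ is irreducible — hence, a fortiori, square-free, which is what the next step of the induction needs — under the inductive hypothesis that $R_{n-1}$ is square-free. Regard $R_n=A\,\eta_n^2+B$ as a quadratic polynomial in $\eta_n$ over the UFD $\C[\eta_1,\dots,\eta_{n-1}]$, with $A=R_{n-1}$ and $B=\prod_{j<n}\eta_j^2$. First, $R_n$ is primitive in $\eta_n$: the irreducible factors of $B$ are $\eta_1,\dots,\eta_{n-1}$, and none divides $A=R_{n-1}$, because setting $\eta_j=0$ annihilates every monomial of $R_{n-1}$ except $\prod_{i\neq j,\,i<n}\eta_i^2\neq 0$. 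By Gauss's lemma it therefore suffices to prove $R_n$ irreducible over the fraction field $F=\C(\eta_1,\dots,\eta_{n-1})$.

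A degree-two polynomial $A\,\eta_n^2+B$ with $A,B\in F^\times$ is reducible over $F$ exactly when it has a root in $F$, i.e. when $-B/A$ is a square in $F$; since $B=(\prod_{j<n}\eta_j)^2$ is already a square, this happens iff $-R_{n-1}$ is a square in $F$. But $R_{n-1}$ is square-free by hypothesis (it is $R_2$ when $n=3$ and irreducible when $n\ge 4$), so $-R_{n-1}$ is a square-free non-unit of $\C[\eta_1,\dots,\eta_{n-1}]$, and such an element is never a square in the fraction field — if $-R_{n-1}=(u/v)^2$ with $u,v$ coprime, any irreducible dividing $-R_{n-1}$ would divide it to even order. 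Hence $R_n$ is irreducible over $F$, and by Gauss's lemma irreducible in $\C[\eta_1,\dots,\eta_n]$, completing the induction.

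The proof is short; the one point that needs care is the bookkeeping. Because $R_2$ is reducible, the hypothesis that must be carried along the induction is square-freeness of $R_{n-1}$, not irreducibility, and square-freeness is precisely what the Gauss's-lemma step consumes, through the observation that the quadratic $A\eta_n^2+B$ splits over $F$ if and only if $-B/A$ is a perfect square there. Everything else — the recursion, the primitivity check, and the elementary facts about squares in a UFD — is routine.
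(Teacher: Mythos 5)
Your proof is correct, and it takes a genuinely different route from the one in the paper, even though both start from the same recursion $\sigma_{n-1}(\eta^2)=\eta_j^2\,\sigma_{n-2}(\eta_{\hat\jmath}^2)+\sigma_{n-1}(\eta_{\hat\jmath}^2)$. The paper argues directly: it posits a factorization, splits into cases according to whether a factor depends on the distinguished variable, equates coefficients of $\eta_1^2$ to get $\sigma_{n-2}(\eta_{\hat 1}^2)=p_{\hat 1}q_{\hat 1}$, and contradicts an inductive hypothesis of \emph{irreducibility}. Since $\sigma_1(\eta^2)=\eta_1^2+\eta_2^2$ is reducible, that induction cannot start at $n=2$, and the paper must anchor it with a separate hands-on verification at $n=3$ (tracking the factors $\eta_2\pm i\eta_3$ and the term $\eta_2^2\eta_3^2$). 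You instead view $R_n=A\eta_n^2+B$ as a primitive quadratic over $\C[\eta_1,\dots,\eta_{n-1}]$, invoke Gauss's lemma, and use the criterion that such a quadratic splits over the fraction field iff $-B/A$ is a square there; since $B$ is itself a square, this reduces to whether $R_{n-1}$ is a square, which square-freeness forbids. Carrying \emph{square-freeness} rather than irreducibility as the inductive hypothesis is exactly the right bookkeeping: it lets the induction start at $n=2$ and absorbs the $n=3$ case into the general step, at the cost of importing Gauss's lemma and elementary facts about squares in a UFD, whereas the paper's argument is longer and needs the special case but stays entirely at the level of comparing coefficients. Your primitivity check (no $\eta_j$ divides $R_{n-1}$, seen by setting $\eta_j=0$) and the even-order argument ruling out squares are both sound, so there is no gap.
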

\begin{proof}
  The statement for \(n=2\) is obvious.  We will prove
  this lemma for \(n\ge 3\) by induction, making use of
  the identity
\begin{equation}\nn
\sigma_{n-1}(\eta^2) = \eta_1^2 \sigma_{n-2}(\eta_{\hat{i}}^2) + \sigma_{n-1}(\eta_{\hat{i}}^2)
\end{equation}
for $\eta\in\C^n$. If $\sigma_{n-1}(\eta^2)$ factors,
and one factor does not depend on $\eta_1$, then 
we must have
\begin{equation}\nn
\eta_1^2 \sigma_{n-2}(\eta_{\hat1}^2) + \sigma_{n-1}(\eta_{\hat1}^2)=\sigma_{n-1}(\eta^2) = p_{\hat1} ( \eta_1^2 q_{\hat1} + r_{\hat1})
\end{equation}
where $p_{\hat1}, q_{\hat1},$ and $r_{\hat1}$ are
non-constant polynomials independent of $\eta_1$. Equating
coefficients of $\eta_1^2$ gives
\begin{equation}\nn
\sigma_{n-2}(\eta_{\hat1}^2) = p_{\hat1}q_{\hat1},
\end{equation}
which contradicts the induction hypothesis because
$\sigma_{n-2}(\eta_{\hat1}^2) = \sigma_{m-1}(\xi^2)$ with
$m=n-1$ and $\xi = \eta_{\hat1} \in \C^m$.\\

On the other hand, if both factors depend on $\eta_1$, i.e.
\begin{equation}\nn
\eta_1^2 \sigma_{n-2}(\eta_{\hat1}^2) + \sigma_{n-1}(\eta_{\hat1}^2) = (\eta_1 p_{\hat1} + r_{\hat1})(\eta_1q_{\hat1} + s_{\hat1}).
\end{equation}
Equating coefficients of $\eta_1^2$ again gives
\begin{equation}\nn
\sigma_{n-2}(\eta_{\hat1}^2) = p_{\hat1} q_{\hat1}
\end{equation}
and contradicts the induction hypothesis.\\

We finish the induction by verifying irreduciblity in the
case $n=3$. In this case,
\begin{equation}\nn
\sigma_2(\eta^2) =
\eta_1^2(\eta_2 + i \eta_3)(\eta_2 - i \eta_3) + \eta_2^2\eta_3^2.
\end{equation}
If 
\begin{eqnarray}
  \nn
  p_{\hat1}(\eta_1^2 q_{\hat1} + r_{\hat1}) =
\sigma_2(\eta^2) = 
\eta_1^2(\eta_2 + i \eta_3)(\eta_2 - i \eta_3)
+ \eta_2^2\eta_3^2
\end{eqnarray}
equating the coefficients of $\eta_1^2$, we see again that
$p_{\hat1}$ must divide
$(\eta_2+i\eta_3)(\eta_2-i\eta_3)$. Equating the
coefficients of the terms that do not involve
\(\eta_{1}^{2}\), tells us that $p_{\hat1}$ also divides
$\eta_2^2\eta_3^2$, but this is impossible because the two
have prime factorizations without common factors.\\

If, on the other hand,  \((\eta_1 p_{\hat1} + r_{\hat1})(\eta_1 q_{\hat1} +s_{\hat1})
=\sigma_2(\eta^2)\), expanding both sides of the equation shows that
\begin{equation}\nn
\eta_1^{2} p_{\hat1}q_{\hat1} + \eta_1 (q_{\hat1}r_{\hat1}
+p_{\hat1}s_{\hat1}) + 
= \eta_1^2(\eta_2 + i \eta_3)(\eta_2 - i \eta_3)
+ \eta_2^2\eta_3^2+r_{\hat1}s_{\hat1}
\end{equation}
then
\begin{equation}\nn
p_{\hat1}q_{\hat1} = (\eta_2 + i \eta_3)(\eta_2 - i \eta_3)
\end{equation}
which implies that $p_{\hat1}$ must be a constant multiple
of either $(\eta_2+i\eta_3)$ or $(\eta_2 - i \eta_3)$ and
$q_{\hat1}$ must be a constant multiple of the other. Also
\begin{equation}\nn
q_{\hat1}r_{\hat1} = - p_{\hat1}s_{\hat1}
\end{equation}
so that $p_{\hat1}$ must divide $r_{\hat1}$ because it doesn't divide $q_{\hat1}$. However
\begin{equation}\nn
r_{\hat1} s_{\hat1} = \eta_2^2 \eta_3^2
\end{equation}
does not have $(\eta_2 \pm i\eta_3)$ as a factor, so this
is also impossible and the proof is complete.
\end{proof}

We will need two more propositions for the proof of 
Theorem \ref{LTharmonic}. The first follows easily from
the previous lemma.
\begin{proposition}
\label{proposition1}
If $P$ is harmonic and homogeneous, and 
$\widehat{P}(\rho)$ vanishes identically 
on $\{\rho \cdot \rho = 0\}$, then $\sigma_{n-1}(\eta^2)$
divides the polynomial $Q (\eta) =
\widehat{P}(\frac{1}{\eta})$.
\end{proposition}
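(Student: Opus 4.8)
The plan is to translate the hypothesis into a divisibility statement about the polynomial $Q(\eta)=\widehat{P}(\frac{1}{\eta})$ on the hypersurface $\{\sigma_{n-1}(\eta^2)=0\}$, and then feed it into the irreducibility of $\sigma_{n-1}(\eta^2)$ proved in Lemma~\ref{Rirreducible} together with the Nullstellensatz. First I would substitute $\rho=\frac{1}{\eta}$ and use the identity \eqref{rhoDrhoToEta}, which gives $\rho\cdot\rho=\sigma_{n-1}(\eta^2)/\sigma_n(\eta)^2$. Hence, on the open set $U=\{\eta\mid\eta_i\neq 0\text{ for all }i\}$, the point $\frac{1}{\eta}$ lies on the variety $\{\rho\cdot\rho=0\}$ exactly when $\sigma_{n-1}(\eta^2)=0$. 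Since $\widehat{P}$ vanishes identically there, $Q$ vanishes on $U\cap\{\sigma_{n-1}(\eta^2)=0\}$, and because $Q$ is a polynomial (hence continuous) and this set is dense in the whole hypersurface $\{\sigma_{n-1}(\eta^2)=0\}$, we conclude that $Q$ vanishes on all of $\{\sigma_{n-1}(\eta^2)=0\}$.

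Next I would split into the two cases. For $n\ge 3$, Lemma~\ref{Rirreducible} says $\sigma_{n-1}(\eta^2)$ is irreducible, hence square-free, so the ideal $(\sigma_{n-1}(\eta^2))$ is prime and radical; by the Nullstellensatz any polynomial vanishing on its zero set belongs to it, and applying this to $Q$ yields $\sigma_{n-1}(\eta^2)\mid Q$. For $n=2$, Lemma~\ref{Rirreducible} gives the explicit factorization $\sigma_1(\eta^2)=\eta_1^2+\eta_2^2=(\eta_1-i\eta_2)(\eta_1+i\eta_2)$ into two non-proportional, hence coprime, linear forms; the argument of the first paragraph shows $Q$ vanishes on each of the two lines $\eta_1=\pm i\eta_2$, so each linear factor divides $Q$, and therefore so does their product $\sigma_1(\eta^2)$. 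In both cases this is exactly the claimed conclusion.

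The one place that calls for a word of care is the density step used to pass from ``$Q$ vanishes on $U\cap\{\sigma_{n-1}(\eta^2)=0\}$'' to ``$Q$ vanishes on $\{\sigma_{n-1}(\eta^2)=0\}$''. This amounts to checking that the hypersurface $\{\sigma_{n-1}(\eta^2)=0\}$ is not contained in the union of the coordinate hyperplanes, which is immediate: for every $n\ge 2$ one can exhibit a point of the variety with all coordinates nonzero (e.g. $(i,1)$ when $n=2$, and $(1,\dots,1,i/\sqrt{n-1})$ when $n\ge 3$), and for $n\ge 3$ irreducibility of $\sigma_{n-1}(\eta^2)$ even makes $U\cap\{\sigma_{n-1}(\eta^2)=0\}$ Zariski-dense in the variety. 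Beyond this routine verification the proposition is a formal consequence of Lemma~\ref{Rirreducible} and the Nullstellensatz, so I do not anticipate any genuine obstacle.
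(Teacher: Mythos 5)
Your proof is correct and follows essentially the same route as the paper's: translate the hypothesis via \eqref{rhoDrhoToEta}, invoke the irreducibility statement of Lemma~\ref{Rirreducible} together with the Nullstellensatz, and treat $n=2$ by showing each of the two coprime linear factors divides $Q$. The only difference is in one technical step: where you pass from vanishing on $U\cap\{\sigma_{n-1}(\eta^2)=0\}$ to vanishing on the whole variety by a (Zariski-)density argument, the paper instead multiplies by $\sigma_n(\eta^2)$ so that the product vanishes on the entire variety, applies the Nullstellensatz to $\sigma_n(\eta^2)Q$, and then strips off the factor $\sigma_n(\eta^2)$ using that $\sigma_{n-1}(\eta^2)$ (respectively each of its two factors when $n=2$) does not divide it --- a purely algebraic variant that sidesteps the density considerations you rightly flag as the one point needing care.
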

\begin{proof}
  Because \(\hat{P}\) vanishes on \(\{\rho \cdot \rho =
  0\}\), it folllows from (\ref{rhoDrhoToEta}) that \(Q\)
  vanishes on the set \(\{\sigma_{n-1}(\eta^2) =
  0\}\setminus\{\sigma_n(\eta^{2}) = 0\}\). Therefore, the
  product \(\sigma_n(\eta^{2})Q(\eta^{2})\) vanishes on the
  entire variety \(\{ \sigma_{n-1}(\eta^2) = 0\}\), and
  hence must be divisible by
  \(\sigma_{n-1}(\eta^2)\) by Hilbert's Nullstellensatz. For  \(n\ge 3\),  \(\sigma_{n-1}(\eta^2)\)
  is irreducible and doesn't divide \(\sigma_n(\eta^2)\),
  so it must divide \(Q\). In the case  \(n=2\),
  \(\sigma_{n-1}(\eta^2)\) has two factors; neither factor
  divides \(\sigma_n(\eta^2)\), so both divide  \(Q\). 
\end{proof}

The proof of the next proposition will not be so easy,
\begin{proposition}
\label{proposition2}
$\sigma_{n-1}(\eta^2)$ cannot divide any polynomial $Q$ of the form \eqref{ihatrep}.
\end{proposition}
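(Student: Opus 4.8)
The plan is to show, equivalently, that $R:=\sigma_{n-1}(\eta^2)$ does not divide $S$ to second order. By \eqref{ihatrep} we have $\sigma_{n-1}(\eta^2)\,Q=\sigma_n(\eta)\,S$ with $S:=\sum_{i=1}^n\big(P_{\hat i}+\eta_i Q_{\hat i}\big)$. Since $R$ is irreducible for $n\ge3$ by Lemma~\ref{Rirreducible} and $\deg R=2(n-1)>n=\deg\sigma_n$, $R$ does not divide $\sigma_n$, so $R\mid Q$ would force $R^2\mid\sigma_n S$ and hence $R^2\mid S$. As $Q$ is the Laplace transform of a nonzero harmonic polynomial, $Q\ne0$, whence $S\ne0$. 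So it suffices to prove: \emph{no nonzero $S$ of the form \eqref{ihatrep} is divisible by $R^2$}. The structural fact I will exploit is that every monomial $\eta^\alpha$ occurring in $S$ has some exponent $\alpha_i\le1$: since $P_{\hat i}$ is free of $\eta_i$ and $\eta_iQ_{\hat i}$ has $\eta_i$-degree exactly one, the coefficient of $\eta^\alpha$ in $S$ is $\sum_{i:\alpha_i=0}[\eta^\alpha]P_{\hat i}+\sum_{i:\alpha_i=1}[\eta^{\alpha-e_i}]Q_{\hat i}$, an empty sum when every $\alpha_i\ge2$. Call a homogeneous polynomial with this property \emph{admissible} (conversely every admissible polynomial is of the form \eqref{ihatrep}); I must show an admissible multiple of $R^2$ vanishes.

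Suppose $R^2\mid S$ with $S$ admissible, and write $S=R^2T$. I will use the elementary identity $R^2=\sigma_{n-1}(\eta^4)+2\,\sigma_n(\eta)^2\,\sigma_{n-2}(\eta^2)$, obtained by squaring $\sigma_{n-1}(\eta^2)=\sum_i\prod_{j\ne i}\eta_j^2$ and separating diagonal and off-diagonal terms. The summand $2\sigma_n^2\sigma_{n-2}(\eta^2)T$ consists entirely of monomials all of whose exponents are $\ge2$, while a monomial of $T$ times $\prod_{j\ne i}\eta_j^4$ has all exponents $\ge2$ exactly when its $\eta_i$-exponent is $\ge2$. Hence the monomials of $S$ with every exponent $\ge2$ — which all vanish by admissibility — are $\sigma_n^2$ times the left side of
\begin{equation}\nn
2\,\sigma_{n-2}(\eta^2)\,T+\sum_{i=1}^n\Big(\prod_{j\ne i}\eta_j^2\Big)U^{(i)}=0,
\qquad
U^{(i)}:=\frac1{\eta_i^2}\cdot\big(\text{sum of monomials of }T\text{ with }\eta_i\text{-exponent}\ge2\big),
\end{equation}
so this polynomial identity holds.

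Now expand the displayed identity in powers of $\eta_1$: write $T=\sum_{d\ge0}\eta_1^d\tau_d$ with $\tau_d\in\C[\eta_{\hat1}]$ and use $\sigma_{n-2}(\eta^2)=\sigma_{n-2}(\eta_{\hat1}^2)+\eta_1^2\sigma_{n-3}(\eta_{\hat1}^2)$. The coefficient of $\eta_1^0$ reads $2\sigma_{n-2}(\eta_{\hat1}^2)\tau_0+\big(\prod_{j\ne1}\eta_j^2\big)\tau_2=0$; since $\prod_{j\ne1}\eta_j$ is coprime to the symmetric polynomial $\sigma_{n-2}(\eta_{\hat1}^2)$ (setting any $\eta_m=0$ in the latter leaves a nonzero term), this forces $\prod_{j\ne1}\eta_j^2\mid\tau_0$. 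Substituting back into the coefficient of $\eta_1^{2}$ — where the relevant $U^{(i)}$ become explicit multiples of $\tau_0$ — and using the companion identity $\sigma_{n-2}(\eta_{\hat1}^2)^2=\sigma_{n-2}(\eta_{\hat1}^4)+2\big(\prod_{j\ne1}\eta_j^2\big)\sigma_{n-3}(\eta_{\hat1}^2)$ to cancel terms, one obtains $\big(\prod_{j\ne1}\eta_j\big)^4\mid\tau_0$, and inductively $\big(\prod_{j\ne1}\eta_j\big)^{2m}\mid\tau_0$ for all $m$; as $\tau_0$ has bounded degree, $\tau_0=0$, i.e. $\eta_1\mid T$. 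By symmetry $\sigma_n(\eta)\mid T$, hence $\sigma_n\mid S$. Writing $S=\sigma_n S_1$, dividing each monomial by $\prod_i\eta_i$ lowers every exponent by one, so admissibility of $S$ forces every monomial of $S_1$ to have some exponent $0$; in particular $S_1$ is admissible, $R^2\mid S_1$ (as $\gcd(R,\sigma_n)=1$), and $\deg S_1=\deg S-n$. Running this by induction on degree — the base case is vacuous, a nonzero multiple of $R^2$ having degree $\ge4(n-1)$ — gives $S_1=0$, hence $S=0$. This shows $R\nmid Q$ and proves the proposition.

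The step I expect to be the real work is the bootstrap that $\eta_1\mid T$: one must organize the power-series-in-$\eta_1$ recursion carefully, tracking which $\tau_d$'s and $U^{(i)}$'s enter each coefficient equation and verifying that the scalar produced at each stage is nonzero, and must check the auxiliary symmetric-function and coprimality statements uniformly in $n\ge3$ (the case $n=3$, where $\sigma_{n-2}(\eta_{\hat1}^2)=\eta_2^2+\eta_3^2$ is reducible, serving as the base of a secondary induction, as in Lemma~\ref{Rirreducible}). Everything else is routine monomial bookkeeping.
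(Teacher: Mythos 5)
Your reduction is sound and, up to the identity you label $(*)$, everything checks out: from \eqref{ihatrep} one has $\sigma_{n-1}(\eta^2)\,Q=\sigma_n(\eta)\,S$ with $S$ of the special form, irreducibility of $\sigma_{n-1}(\eta^2)$ (Lemma~\ref{Rirreducible}) and its coprimality to $\sigma_n$ give $R^2\mid S$, the characterization of $S$ as ``admissible'' (every monomial has some exponent $\le 1$) is exactly the structural content of the special form, the identity $\sigma_{n-1}(\eta^2)^2=\sigma_{n-1}(\eta^4)+2\sigma_n(\eta)^2\sigma_{n-2}(\eta^2)$ is correct, and extracting the all-exponents-$\ge 2$ part of $R^2T$ does yield your displayed identity, from which $\bigl(\prod_{j\ne 1}\eta_j\bigr)^2\mid\tau_0$ follows as you say. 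This is a genuinely different route from the paper's: there, one substitutes $s=\eta^2$ via a parity decomposition $R=\sum_\tau\eta^\tau R_\tau(\eta^2)$ (Lemmas~\ref{sec:expansion-lemma} and~\ref{sec:lemma-1}), observes that each $R_\tau$ inherits both the divisibility and the special form $\sum_i T_{\hat i}$, and then runs an induction on the \emph{number of variables} (Proposition~\ref{sec:prop3}): the top coefficient in the $s_1$-expansion of $T=\sigma_{n-1}^2(s)C$ is literally the same statement in $n-1$ variables, so no bootstrap is needed. You instead induct on \emph{degree}, which requires first proving $\sigma_n\mid T$.

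That is where the genuine gap sits. The claim ``inductively $\bigl(\prod_{j\ne1}\eta_j\bigr)^{2m}\mid\tau_0$ for all $m$, hence $\tau_0=0$'' is asserted, not proved, and you correctly identify it as the real work; as written the proposal is an incomplete plan rather than a proof. The difficulty is structural: the coefficient of $\eta_1^{2m}$ in $(*)$ couples $\tau_{2m}$, $\tau_{2m-2}$, the new unknown $\tau_{2m+2}$, \emph{and} the truncations $U^{(i)}$ applied to $\tau_{2m-2}$ for $i\ne1$, so each equation you use introduces a fresh unknown, and the divisibility gain must be extracted from the requirement that the recursively defined $\tau_{2m+2}=(\cdots)/\prod_{j\ne1}\eta_j^2$ be a polynomial. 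I checked that the first iteration does close up (writing $\tau_0=\bigl(\prod_{j\ne1}\eta_j^2\bigr)\mu$, the truncation $\eta_i^2U^{(i)}$ acts on $\tau_0$ as the identity, the companion identity for $\sigma_{n-2}(\eta_{\hat1}^2)^2$ cancels the cross term, and coprimality of $\sigma_{n-2}(\eta_{\hat1}^4)$ to each $\eta_j$ yields $\bigl(\prod_{j\ne1}\eta_j\bigr)^4\mid\tau_0$), so the strategy is plausible; but the general inductive step must actually be organized and verified, including the nonvanishing of the scalar that appears at each stage, before this can be accepted. If the bookkeeping becomes unwieldy, note that your identity $(*)$ specializes at $n=2$ to exactly the paper's tridiagonal system, and that the paper's change of variables $s=\eta^2$ converts the whole problem into one where induction on dimension replaces your bootstrap.
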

\noindent
but the proof of Theorem \ref{LTharmonic} is an immediate consequence.

\begin{proof}[Proof of Theorem \ref{LTharmonic}]
  If \(n\ge3\), the hypothesis of Theorem \ref{LTharmonic}
  is that \(\hat{P}\) vanishes on an open subset of
  $\{\rho \cdot \rho = 0\}$, which means that \(Q\),
  vanishes on an open subset of the irreducible variety
  \(\sigma_{n-1}(\eta^2)=0\). But this means that  \(Q\)
  vanishes on the whole variety by \cite[p.91]{Ritt} or \cite{Waerden} and that
  $\sigma_{n-1}(\eta^2)$ divides  \(Q\), contradicting
  Proposition \ref{proposition2}.

  If  \(n=2\), we have the same hypothesis for each of
  the irreducible factors,
  \(\rho_{1}-i\rho_{2}\) and \(\rho_{1}+i\rho_{2}\), so we
  may conclude that each divides  \(Q\), and therefore
  that  \(Q\) is divisible by their product \(\sigma_{n-1}(\eta^2)\).  
\end{proof}

\begin{proof}[Proof of Proposition \ref{proposition2}]

We will make essential use of the the fact that
\(\sigma_{n-1}(\eta^2)\) is even in each component
\(\eta_{j}\)  of  \(\eta\).

\begin{lemma}\label{sec:expansion-lemma}
  Every polynomial \(R(\eta)\)  has a unique
  decomposition into a sum 
  \begin{equation}
    \nn
    R(\eta) = \sum_{\tau\in\{0,1\}^{n}}\eta^{\tau}R_{\tau}(\eta^{2})
  \end{equation}
where  \(\tau\) is a multi-index with each component equal
to  \(0\) or  \(1\). If  \(R\) has the special form
\(R=\sum_{i}(\eta_{i}P_{\hat{i}}+Q_{\hat{i}})\), then each
of the  coefficients \(R_{\tau}(\eta^{2})\) has the
special form
\begin{eqnarray}
  \label{eq:39}
  R_{\tau}=\sum_{i}S_{\h{i}}(\eta_{\h{i}}^{2})
\end{eqnarray}
\end{lemma}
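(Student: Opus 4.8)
The plan is to prove the two assertions separately. For the existence and uniqueness of the decomposition $R(\eta) = \sum_{\tau\in\{0,1\}^{n}}\eta^{\tau}R_{\tau}(\eta^{2})$, I would argue one variable at a time. First I would observe that any polynomial in a single variable $t$ splits uniquely as an even part plus $t$ times an even part: group the monomials by the parity of their exponent, and note that an even polynomial in $t$ is exactly a polynomial in $t^{2}$. Applying this to $R$ regarded as a polynomial in $\eta_{1}$ with coefficients in $\C[\eta_{2},\dots,\eta_{n}]$ gives $R = R^{(0)}(\eta^{2}_{1};\eta_{\hat 1}) + \eta_{1} R^{(1)}(\eta^{2}_{1};\eta_{\hat 1})$ where both pieces are polynomials in $\eta_{1}^{2}$; then iterate on $\eta_{2},\dots,\eta_{n}$. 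Induction on $n$ packages this cleanly, and uniqueness follows because the $2^{n}$ monomials $\eta^{\tau}$, $\tau\in\{0,1\}^{n}$, are linearly independent over $\C[\eta^{2}] := \C[\eta_{1}^{2},\dots,\eta_{n}^{2}]$ — indeed $\C[\eta]$ is free of rank $2^{n}$ over $\C[\eta^{2}]$ with this basis, since each $\eta_{j}$ satisfies the monic relation $\eta_{j}^{2} - (\eta_{j}^{2}) = 0$ over $\C[\eta^{2}]$.

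The second assertion is that when $R$ has the special form $R = \sum_{i=1}^{n}\bigl(\eta_{i}P_{\hat i}(\eta_{\hat i}) + Q_{\hat i}(\eta_{\hat i})\bigr)$, each coefficient $R_{\tau}(\eta^{2})$ is again a sum of the form $\sum_{i} S_{\hat i}(\eta_{\hat i}^{2})$, i.e. a sum of $n$ terms each of which omits one variable and is even in all the others. The approach is: decompose each summand $\eta_{i}P_{\hat i} + Q_{\hat i}$ individually by the recipe above and then collect. The key point is that $P_{\hat i}$ and $Q_{\hat i}$ do not involve $\eta_{i}$, so when we split $P_{\hat i}(\eta_{\hat i})$ into its $\{0,1\}^{n-1}$-parts in the remaining variables $\eta_{\hat i}$, the component of $\eta_{i}P_{\hat i}$ attached to a monomial $\eta^{\tau}$ is nonzero only when $\tau_{i}=1$, and in that case the coefficient is an even polynomial in $\eta_{\hat i}$ that does not depend on $\eta_{i}$ — exactly an $S_{\hat i}(\eta_{\hat i}^{2})$ type term. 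Similarly $Q_{\hat i}(\eta_{\hat i})$ only contributes to monomials $\eta^{\tau}$ with $\tau_{i}=0$, with even coefficients independent of $\eta_{i}$. Summing over $i$, the $\eta^{\tau}$-coefficient $R_{\tau}(\eta^{2})$ of $R$ is a sum, over those $i$ with the appropriate parity of $\tau_{i}$, of terms each independent of $\eta_{i}$ and even in the rest — precisely the shape \eqref{eq:39}. I would pad the sum by zero terms where needed so that every $R_{\tau}$ is literally written as a sum over all $i$ from $1$ to $n$.

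I do not expect a serious obstacle here; both parts are essentially bookkeeping. The one place to be careful is making the "collecting" step in the second assertion airtight: one must track that the decomposition is $\C$-linear (so it distributes over the sum $\sum_{i}$) and that the parity splitting of a polynomial not involving $\eta_{i}$ genuinely lands in the even subspace in $\eta_{i}$ with the $\tau_{i}$-slot forced, so that no cross terms mixing the $i$-th variable back in can appear. Linearity of the parity projections — which are just "keep monomials of prescribed exponent-parities" — handles this immediately. So the proof is a short induction for existence/uniqueness followed by a direct termwise computation for the structural claim about $R_{\tau}$.
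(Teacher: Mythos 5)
Your proposal is correct and follows essentially the same route as the paper: the decomposition is obtained by the parity splitting of monomial exponents (the paper groups monomials by their exponent-parity pattern directly, which amounts to your variable-by-variable even/odd split), uniqueness comes from the same linear-independence/explicit-formula observation, and the structural claim about $R_{\tau}$ is proved exactly as you describe, by decomposing each summand $\eta_{i}P_{\hat i}+Q_{\hat i}$ separately and noting its $\eta^{\tau}$-coefficients are independent of $\eta_{i}$ before collecting over $i$.
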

\begin{proof} We express  \(R\) as a sum of monomials, 
  \begin{eqnarray}
    \nn
    R(\eta) &=& \sum_{\alpha}p_{\alpha}\eta^{\alpha}
    \\\na{group the terms that are even or odd for each
      \(\eta_{i}\) together}\nn
    &=&
    \sum_{\tau\in\{0,1\}^{n}}\left(\sum_{\alpha\equiv_{2}\tau}
      (p_{\alpha}\eta^{\alpha})\right)
    \\\na{and remove a single power of  \(\eta_{i}\) from
      each monomial that is odd in  \(\eta_{i}\)}\label{eq:9}
    &=&
    \sum_{\tau\in\{0,1\}^{n}}\left(\sum_{\alpha\equiv_{2}\tau}
      (p_{\alpha}\eta^{\alpha-\tau})\right)\eta^{\tau}
    \\\na{so that the summands in the parentheses
      contains only even powers\espace}\nn
    &=&
    \sum_{\tau\in\{0,1\}^{n}}R_{\tau}(\eta^{2})\eta^{\tau}
  \end{eqnarray}
  The explicit formula for  each \(R_{\tau}\) in (\ref{eq:9}) implies that the
  decomposition is unique. Suppose now that \(R\) has the
  special form
  \(\sum_{i}(\eta_{i}P_{\hat{i}}+Q_{\hat{i}})\), we can
  first decompose each of the \(Q_{\hat{i}}\) and the
  \(P_{\hat{i}}\). 
  \begin{eqnarray}\nn
    \eta_{i}P_{\h{i}}+Q_{\hat{i}}
    &=&
    \eta_{i}\left(\sum_{\tau_{\h{i}}\in\{0,1\}^{n-1}}
      P_{\tau_{\h{i}}}(\eta_{\h{i}}^{2})\eta^{\tau_{\h{i}}}\right)
  +
  \sum_{\tau_{\h{i}}\in\{0,1\}^{n-1}}
  Q_{\tau_{\h{i}}}(\eta_{\h{i}}^{2})\eta^{\tau_{\h{i}}}
  \\\nn
  &=&
    \sum_{\tau_{\h{i}}\in\{0,1\}^{n-1}}
    P_{\tau_{\h{i}}}(\eta_{\h{i}}^{2})\left(\eta^{\tau_{\h{i}}}\eta_{i}^{1}\right)
  +\sum_{\tau_{\h{i}}\in\{0,1\}^{n-1}}
  Q_{\tau_{\h{i}}}(\eta_{\h{i}}^{2})\left(\eta^{\tau_{\h{i}}}\eta_{i}^{0}\right)
      \end{eqnarray}
which shows that each summand
\(\eta_{i}P_{\hat{i}}+Q_{\hat{i}}\) has a decompostion
where the coefficients  of \(\eta^{\tau}\) are indepedent
of  \(\eta_{i}\). Thus the sum has coefficients which are
sums of such functions. 
\end{proof}

\begin{lemma}\label{sec:lemma-1}
  If a polynomial  \(S(\eta^{2})\) divides
  \(\displaystyle R(\eta)=
  \sum_{\tau\in\{0,1\}^{n}}\eta^{\tau}R_{\tau}(\eta^{2})\),
  then   \(S\) divides each  \(R_{\tau}\).  
\end{lemma}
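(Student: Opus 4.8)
The plan is to deduce the statement directly from the uniqueness half of Lemma~\ref{sec:expansion-lemma}. Since $S(\eta^2)$ divides $R$, I would write $R(\eta) = S(\eta^2)\,T(\eta)$ for some polynomial $T$, and then apply Lemma~\ref{sec:expansion-lemma} to $T$ to obtain its parity decomposition $T(\eta) = \sum_{\tau\in\{0,1\}^n}\eta^\tau\,T_\tau(\eta^2)$. Multiplying by $S(\eta^2)$, which is even in each variable $\eta_j$, gives
$$R(\eta) = \sum_{\tau\in\{0,1\}^n}\eta^\tau\,\bigl(S(\eta^2)\,T_\tau(\eta^2)\bigr),$$
and each coefficient $S(\eta^2)T_\tau(\eta^2)$ is again a polynomial in the squared variables $\eta_1^2,\dots,\eta_n^2$. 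By the uniqueness asserted in Lemma~\ref{sec:expansion-lemma}, comparing with the given decomposition $R(\eta)=\sum_\tau \eta^\tau R_\tau(\eta^2)$ forces $R_\tau(\eta^2) = S(\eta^2)T_\tau(\eta^2)$ for every $\tau$.

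The remaining point is to turn this identity in $\eta$ into genuine divisibility in the polynomial ring. The substitution $\zeta_j\mapsto\eta_j^2$ induces an injective ring homomorphism on polynomials — a monomial $\zeta^\beta$ maps to $\eta^{2\beta}$, and distinct exponents $\beta$ give distinct images — so the identity $R_\tau(\eta^2) = S(\eta^2)T_\tau(\eta^2)$ lifts to $R_\tau(\zeta) = S(\zeta)T_\tau(\zeta)$ as polynomials in $\zeta$. Hence $S$ divides each $R_\tau$, which is exactly the assertion.

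I expect no real obstacle here beyond bookkeeping: the only things to check are that $S(\eta^2)T_\tau(\eta^2)$ is a legitimate coefficient function (a polynomial in the squared variables), which is immediate, and that one is entitled to invoke uniqueness of the parity decomposition, which is precisely what Lemma~\ref{sec:expansion-lemma} supplies. For a more structural phrasing one could note that the polynomials even in each variable form a subring over which the full polynomial ring is free with basis $\{\eta^\tau\}_{\tau\in\{0,1\}^n}$, that $S(\eta^2)$ lies in that subring, and that multiplication by such an element acts diagonally on the basis coordinates; I would nevertheless keep the elementary version above, since it is self-contained and short.
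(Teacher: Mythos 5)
Your proof is correct and follows essentially the same route as the paper's: factor $R=S(\eta^2)T(\eta)$, decompose the cofactor via Lemma~\ref{sec:expansion-lemma}, and equate coefficients using the uniqueness of the parity decomposition. The extra remark that the substitution $\zeta_j\mapsto\eta_j^2$ is injective, so the identity lifts to genuine divisibility in the polynomial ring, is a small detail the paper leaves implicit.
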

\begin{proof}
  Suppose that 
  \begin{eqnarray}\nn
    R(\eta) &=& S(\eta^{2})C(\eta)
\\\na{expand both  \(R\) and  \(C\) as in Lemma
  \ref{sec:expansion-lemma}\espace}\nn  
\sum_{\tau\in\{0,1\}^{n}}\eta^{\tau}R_{\tau}(\eta^{2})
&=&
S(\eta^{2})\sum_{\tau\in\{0,1\}^{n}}\eta^{\tau}C_{\tau}(\eta^{2})
\\\nn
&=&
\sum_{\tau\in\{0,1\}^{n}}\eta^{\tau}S(\eta^{2})C_{\tau}(\eta^{2})    
  \end{eqnarray}
and now use the uniqueness of the expansion to equate the
coefficients of each monomial  \(\eta^{\tau}\). 
\end{proof}

The last ingredient necessary for the proof of Proposition
\ref{proposition2} is
\begin{proposition}\label{sec:prop3}
   \(\sigma_{n-1}^{2}(s)\) does not divide any polynomial
   of the form    \(T(s)=\sum T_{\hat{i}}\)  unless  \(T\)
   is identically zero. 
\end{proposition}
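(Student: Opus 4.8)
The plan is to first recast the hypothesis ``$T(s)=\sum_i T_{\hat i}$'' into a cleaner shape: a polynomial is of this form precisely when none of its monomials is divisible by $\sigma_n(s):=s_1\cdots s_n$, i.e.\ when no monomial involves all $n$ variables (assign each monomial of $T$, according to the least index of a variable it omits, to one of the $T_{\hat i}$). Since $\sigma_{n-1}(s)^2\mid T$, write $T=\sigma_{n-1}(s)^2\,W$ with $W\in\mathbb C[s]$; it then suffices to show that $W\neq 0$ forces $\sigma_{n-1}^2 W$ to have a monomial divisible by $\sigma_n$. As $\sigma_{n-1}^2$ is homogeneous, the top-degree homogeneous part of $\sigma_{n-1}^2W$ is $\sigma_{n-1}^2W_{\mathrm{top}}$ and still has no all-variable monomial, so it is enough to treat $W$ homogeneous, say of degree $e$.

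Next I would isolate the ``all-variable part'' of $\sigma_{n-1}^2 W$. Splitting the monomials of $\sigma_{n-1}^2 W$ into those divisible by $\sigma_n$ and those not, a direct computation (writing $\sigma_{n-1}=\sigma_n\sum_i s_i^{-1}$, so $\sigma_n(\sum_i s_i^{-1})^2=2\sigma_{n-2}(s)+\sum_i(\prod_{l\neq i}s_l)/s_i$, and keeping the polynomial part) shows that the sum of the all-variable monomials equals $\sigma_n(s)\,\Lambda(W)$, where
\[
\Lambda(W)\ :=\ 2\,\sigma_{n-2}(s)\,W\ +\ \sum_{i=1}^n\Bigl(\prod_{l\neq i}s_l\Bigr)\,\partial_i^{0}W,\qquad
\partial_i^{0}W:=\frac{W-W|_{s_i=0}}{s_i}.
\]
Thus the proposition is equivalent to the injectivity of the linear map $\Lambda$ on homogeneous polynomials; writing $W=\sum_\alpha w_\alpha s^\alpha$, this is in turn equivalent to: the linear system
\[
\sum_{1\le i,k\le n} w_{\gamma+e_i+e_k}=0\qquad\text{for every }\gamma\in\mathbb Z^n\text{ with }\gamma_l\ge -1\ \ (1\le l\le n),
\]
with the convention $w_\alpha:=0$ unless $\alpha\ge 0$, has only the trivial solution.

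I would establish this by induction on $n$. For $n=2$ the system decouples over total degree and, in the monomial basis, is governed by the $(e+1)\times(e+1)$ tridiagonal Toeplitz matrix with $2$ on the diagonal and $1$ on the two adjacent diagonals; its eigenvalues are $2+2\cos\frac{k\pi}{e+2}>0$, so it is invertible and $W=0$. For the inductive step ($n\ge 3$) I would return to $T$ and restrict to coordinate subspaces: since $\sigma_{n-1}(s)|_{s_j=0}=\prod_{l\neq j}s_l$ while $\sigma_{n-1}(s)|_{s_j=s_{j'}=0}=0$, the hypothesis forces $T|_{s_j=s_{j'}=0}=0$ for all $j\neq j'$ (so every monomial of $T$ involves at least $n-1$ variables) and forces $(\prod_{l\neq j}s_l)^2\mid T|_{s_j=0}$; combined with the special form this yields $T=\sum_j(\prod_{l\neq j}s_l)^2\,U_{\hat j}$ with $U_{\hat j}$ a polynomial not involving $s_j$. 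Feeding the full divisibility $\sigma_{n-1}^2\mid T$ back in---using that $\sigma_{n-1}$ is irreducible (Lemma~\ref{Rirreducible}), so that $\sigma_{n-1}^2\mid T$ means $T$ and $\nabla T$ vanish along the smooth locus of $\{\sigma_{n-1}=0\}$, which is parametrized generically by $s_n=-(s_1\cdots s_{n-1})/\sigma_{n-2}(s_1,\dots,s_{n-1})$---and clearing denominators produces polynomial identities among the $U_{\hat j}$ in the $n-1$ variables $s_1,\dots,s_{n-1}$; extracting from these the analogue of $\Lambda$ in one fewer variable reduces matters to the $(n-1)$-variable instance and forces all $U_{\hat j}=0$, whence $T=0$.

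I expect the inductive step to be the main obstacle: the restriction $\{s_j=0\}$ is exactly where $\sigma_{n-1}$ degenerates, but it destroys the ``$\sum_i T_{\hat i}$'' structure, so the remaining content of $\sigma_{n-1}^2\mid T$ must be read off along the full hypersurface $\{\sigma_{n-1}=0\}$, and reconciling these two viewpoints---this is where the irreducibility of $\sigma_{n-1}$ and careful bookkeeping with the generic parametrization come in---is the delicate point. Everything else (the reformulation, the formula for $\Lambda$, and the $n=2$ base case) is routine.
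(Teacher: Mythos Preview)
Your reformulation---that $T=\sum_i T_{\hat i}$ is equivalent to ``no monomial of $T$ is divisible by $\sigma_n(s)=s_1\cdots s_n$''---is correct and clarifying, and your operator $\Lambda$ does correctly isolate the all-variable part of $\sigma_{n-1}^2W$. Your base case $n=2$ is exactly the paper's: the same tridiagonal matrix, argued invertible (you via the explicit eigenvalues, the paper by inspection).

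The genuine gap is the inductive step. After reaching $T=\sum_j\bigl(\prod_{l\neq j}s_l\bigr)^2 U_{\hat j}$ (which is correct, and in fact $U_{\hat j}=W|_{s_j=0}$), you propose to read the remaining content of $\sigma_{n-1}^2\mid T$ along the hypersurface $\{\sigma_{n-1}=0\}$ via the generic parametrization $s_n=-\sigma_{n-1}(s_{\hat n})/\sigma_{n-2}(s_{\hat n})$, and then to ``extract the analogue of $\Lambda$ in one fewer variable''. This is not carried out, and it is not clear it can be made to work: the substitution mixes all of the $U_{\hat j}$ together, introduces denominators that must be cleared, and you give no mechanism for disentangling the result into a clean $(n-1)$-variable instance of the same proposition. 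The appeal to irreducibility of $\sigma_{n-1}$ and to the vanishing of $T$ and $\nabla T$ on the smooth locus does not by itself produce such a reduction.

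The paper's induction is far more direct and, in fact, dovetails with your own reformulation. Expand $T=\sigma_{n-1}^2W$ as a polynomial in the single variable $s_1$; from $\sigma_{n-1}(s)=s_1\sigma_{n-2}(s_{\hat 1})+\sigma_{n-1}(s_{\hat 1})$ the top coefficient is
\[
T^N(s_{\hat 1})=\sigma_{n-2}^2(s_{\hat 1})\,W^{N-2}(s_{\hat 1}).
\]
But by your own criterion, every monomial of $T$ carrying a positive power of $s_1$ must omit some other variable, so $T^N$ has no monomial divisible by $s_2\cdots s_n$; in other words $T^N$ is of the form $\sum_{i\geq 2}T^N_{\hat 1,\hat i}$. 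That is precisely the $(n-1)$-variable hypothesis (with $\beta=s_{\hat 1}$, $m=n-1$, and $\sigma_{m-1}=\sigma_{n-2}$), so induction gives $W^{N-2}=0$, and one peels off the remaining coefficients of $s_1$ in turn. This replaces your hypersurface argument entirely and avoids any parametrization or denominator-clearing.
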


Before giving its proof, we use it to finish the proof of
Proposition \ref{proposition2}.  If, as in
the hypothesis of Proposition \ref{proposition2},
\(\sigma_{n-1}^{2}(\eta^{2})\) divides  \(R = \sum
(\eta_{i}P_{\hat{i}}+Q_{\hat{i}})\), then, according to
Lemma \ref{sec:lemma-1},
\(\sigma_{n-1}^{2}(\eta^{2})\) divides each of the
\(R_{\tau}\) in the expansion of Lemma
\ref{sec:expansion-lemma}, and each  \(R_{\tau}\) 
has the special form  (\ref{eq:39}). Proposition
\ref{sec:prop3} says that  this is impossible (the
variable  \(s\) replaces \(\eta^{2}\))   and thus
finishes the proof of Proposition \ref{proposition2}.
\end{proof}

\begin{proof}[Proof of Proposition \ref{sec:prop3}]
 We will prove the proposition by induction on the number
 of independent variables. We will  expand all polynomials
 as polynomials in the single variable  \(s_{1}\) with
 coefficients that depend on the other variables. We begin
 with 

 \begin{eqnarray}
   \nn
   \sigma_{n-1}(s) &=& s_{1}\sigma_{n-2}(s_{\h1}) +
   \sigma_{n-1}(s_{\h1})
\\\nn
 \sigma_{n-1}^{2}(s)&=& s_{1}^{2}\sigma_{n-2}^{2}(s_{\h1})
 + s_{1}2\sigma_{n-2}(s_{\h1})\sigma_{n-1}(s_{\h1}) + \sigma_{n-1}^{2}(s_{\h1})
 \end{eqnarray}
 
If a general polynomial  \(T(s)\) has
\(\sigma_{n-1}^{2}(s)\) as a factor, then expanding the
equality  \(T=\sigma_{n-1}^{2}C\) in powers of
\(s_{1}\) gives 
\begin{eqnarray}
  \nn
\sum_{k=0}^{N}s_{1}^{k}T^{k}(s_{\h1}) = \Big( s_{1}^{2}\sigma_{n-2}^{2}(s_{\h1})
 + s_{1}2\sigma_{n-2}(s_{\h1})\sigma_{n-1}(s_{\h1}) + \sigma_{n-1}^{2}(s_{\h1})\Big)
                                   \left(\sum_{k=0}^{N-2}s_{1}^{k}C^{k}(s_{\h1})\right)
\end{eqnarray}
Equating coefficients of  powers of \(s_{1}\) gives
\begin{eqnarray}\label{eq:34}
  T^{N}(s_{\h1}) &=&
  \sigma_{n-2}^{2}(s_{\h1})C^{N-2}(s_{\h1})
\\\na{and, for  \(j=1\ldots(N-2),\)\espace}\label{eq:41}
  T^{N-j}(s_{\h1}) &=&
  \sigma_{n-2}^{2}(s_{\h1})C^{N-2-j}(s_{\h1}) + \ldots
  \end{eqnarray}
where the  \(\ldots\) indicate terms involving
  \(C^{k}\) for  \(k>N-2-j\). We won't need to use the
  equations for  \(T^{1}\) and  \(T^{0}\).\\

Now, if  \(T\) has the special form  \(T=\sum T_{\h{i}}\),
with the  \(T_{\h{i}}\) independent of  \(s_{i}\), then  
each of the  \(T^{k}\), except  \(T^{0}\),  will have the special form
\begin{eqnarray}
  \nn
  T^{k} = \sum_{i=2}^{n} T^{k}_{\h{1},\h{i}}
\end{eqnarray}
where the subscripts indicate that  \(T^{k}_{\h{1},\h{i}}\) is independent of both
\(\eta_{1}\) and  \(\eta_{i}\). Thus equation
(\ref{eq:34}) becomes
\begin{eqnarray}
  \label{eq:36}
  \sum_{i=2}^{n} T^{N}_{\h{1},\h{i}} =  \sigma_{n-2}^{2}(s_{\h1})C^{N-2}(s_{\h1})
\end{eqnarray}
but this is exactly the hypothesis of the proposition for
one fewer dimension. If we let  \(\beta=s_{\h1}\) and
\(m=n-1\), then (\ref{eq:36}) becomes
\begin{eqnarray}
  \nn
   \sum_{i=1}^{m} T^{N}_{\h{i}}(\beta) = \sigma_{m-1}^{2}(\beta)C^{N-2}(\beta)
\end{eqnarray}
and the induction hypothesis guarantees that \(C^{N-2}\)
and \(T^{N}\) are both identically zero. Once we know that
\(C^{N-2}\) is zero, we may conclude that the term
represented by the \(\ldots\) in equation (\ref{eq:41})
for \(T^{N-1}\) is zero, and repeat the argument to
conclude that \(C^{N-3}\) and \(T^{N-2}\) are zero. We
continue in this manner to conclude that all the
\(C^{k}\), and
therefore all the  \(T^{k}\) are zero. \\

Finally, we verify the proposition in the case
\(n=2\). In this case, we must check that the equality
below
\begin{eqnarray}
 \nn
  p_{N}x^{N} + q_{N}y^{N} = (x+y)^{2}\sum_{k=0}^{N-2} c_{k}x^{k}y^{N-2-k}
\end{eqnarray}
is only possible if  \(p_{N}\), \(q_{N}\), and all the  \(c_{k}\)  are
zero. Equating powers of  \(x\) and  \(y\) give
\begin{eqnarray}
\nn
  p_{N} &=& c_{N-2}
\\\nn
0  &=& 2c_{N-2}+ c_{N-1}
\\\na{for  \(j = 2\ldots (N-2)\) }\nn
0  &=& c_{N-(j+2)} +2c_{N-(j+1)}+ c_{N-j}
\\\na{and}\nn
0  &=& c_{1} +2c_{0}
\\\nn
q_{N} &=& c_{0}
\end{eqnarray}
Discarding the first and last equations gives the
invertible tridiagonal system
\begin{eqnarray}\nn
  \begin{pmatrix}
    0\\\vdots\\\vdots\\0
  \end{pmatrix}
=
\begin{pmatrix}
  2&1&0&\ldots&0
\\
1&2&1&\ldots&0
\\
0&1&2&\ldots&0
\\
\vdots&&&&\vdots
\\
0&\ldots&0&1&2
\end{pmatrix}
\begin{pmatrix}
c_{N-2}\\c_{N-1}\\\vdots\\c_{1}\\c_{0}
\end{pmatrix}
  \end{eqnarray}
whence we conclude that all the  \(c_{k}\) are zero. This
finishes the proof of the proposition. 
\end{proof}

\bibliographystyle{plain}
\bibliography{LpHatBibl}

\end{document}